\DeclareMathOperator{\pic}{Pic}
\DeclareMathOperator{\picrel}{PicRel}
\DeclareMathOperator{\pr}{PR}
\DeclareMathOperator{\Hom}{Hom}
\DeclareMathOperator{\ShHom}{\mathscr{H}\text{\kern -3pt {\calligra\large om}}\,}
\DeclareMathOperator{\Aut}{Aut}
\DeclareMathOperator{\spec}{Spec}
\newcommand{\Mm}{\mathcal{M}}
\newcommand{\Mb}{\overline{\mathcal{M}}}
\newcommand{\Cb}{\overline{\mathcal{C}}}
\newcommand{\C}{\overline{\mathcal{C}}}
\newcommand{\Jb}{\overline{\mathcal{J}}}
\newcommand{\J}{{\mathcal{J}}}
\newcommand{\Pic}{\operatorname{Pic}}
\newcommand{\Ker}{\operatorname{Ker}}
\newcommand{\GSym}{\operatorname{GSym}}
\newcommand{\OD}{\mathcal{O}(\mathcal{D})}
\newcommand{\ODF}{\mathcal{O}(\mathcal{D}(\phi))}
\theoremstyle{plain}
\newtheorem{theorem}{Theorem}
\newtheorem*{theorem*}{Theorem}
\newtheorem{proposition}{Proposition}[section]
\newtheorem{fact}{Fact}
\newtheorem{corollary}[proposition]{Corollary}
\newtheorem{lemma}[proposition]{Lemma}
\theoremstyle{definition}
\newtheorem{definition}[proposition]{Definition}
\newtheorem{remark}[proposition]{\textbf{Remark}}
\newtheorem{example}[proposition]{\textbf{Example}}
\begin{document}

\title{The stability space of compactified universal Jacobians}

\author{Jesse Leo Kass}

\address{J.~L.~Kass, Dept.~of Mathematics, University of South Carolina, 1523 Greene Street, Columbia, SC 29208, United States of America}
\email{kassj@math.sc.edu}
\urladdr{http://people.math.sc.edu/kassj/}

\author{Nicola Pagani}

\address{N.~Pagani, Department of Mathematical Sciences, University of Liverpool, Liverpool, L69 7ZL, United Kingdom}
\email{pagani@liv.ac.uk}
\urladdr{http://pcwww.liv.ac.uk/~pagani/}
\date{\today}

\begin{abstract}
	In this paper we describe compactified universal Jacobians, i.e.~compactifications of the moduli space of line bundles on smooth curves obtained as moduli spaces of rank~$1$ torsion-free sheaves on stable curves, using an approach due to Oda--Seshadri.  We focus on the combinatorics of the stability conditions used to define compactified universal Jacobians.  We explicitly describe an affine space, the stability space, with a decomposition into polytopes such that each  polytope corresponds to a  proper Deligne--Mumford stack that compactifies the moduli space of line bundles.  We apply this description to describe the set of isomorphism classes of compactified universal Jacobians (answering a question of Melo), and to resolve the indeterminacy of the Abel--Jacobi sections (addressing a problem raised by Grushevsky--Zakharov).
\end{abstract}

\maketitle

\section{Introduction}
In this paper we study the problem of extending the universal Jacobian $\J^{d}_{g, n}$ over the moduli space of smooth $n$-pointed curves of genus $g$ to a proper family over the moduli space $\Mb_{g, n}$ of stable pointed curves.  Recall that $\J^{d}_{g, n}$ is the moduli space of degree $d$ line bundles on smooth curves. We extend it as a moduli space of sheaves.  One extension of $\J_{g, n}^d$ is the moduli space $\operatorname{Simp}_{g, n}^d$ of all simple rank~$1$ torsion-free sheaves of degree $d$, but this extension fails to be proper.  Indeed, while it satisfies the existence part of the valuative criterion of properness \cite[Theorem~32]{esteves}, it is not proper because it fails to be separated and of finite type.

Rather than working directly with $\operatorname{Simp}_{g, n}^{d}$, we analyze extensions of $\J^d_{g, n}$ that are  suitable  proper subspaces (or substacks) of $\operatorname{Simp}_{g, n}^d$.  The proper subspaces  of $\operatorname{Simp}_{g, n}^{d}$ we describe are the subspaces defined by choosing a set of multidegrees for each curve and taking the subspace of $\operatorname{Simp}_{g, n}^{d}$ parameterizing the sheaves with multidegree equal to one of the chosen multidegrees.    Here the multidegree of a line bundle $L$ on a reducible curve is the vector whose components are the degrees of the restrictions of $L$ to the irreducible components of the curve.  The problem of prescribing a collection of multidegrees with the property that the resulting subspace of $\operatorname{Simp}_{g, n}^{d}$ is a proper extension of $\J_{g, n}^{d}$ has been studied by a large number of authors; see e.g.~\cite{oda79, altman80, caporaso, simpson, panda, esteves, caporaso08a, melo09, melo11, melo}.  In this paper, we introduce and study subspaces of $\operatorname{Simp}_{g, n}^{d}$ produced by generalizing to the universal family of curves an approach developed by Oda--Seshadri.  With our construction, we produce the commonly studied spaces that extend $\J^{d}_{g, n}$ to a proper space.  In particular, our construction recovers the moduli spaces constructed by Melo in \cite{melo}.  We explain the relation with Melo's work in  Remark~\ref{esteves} and with other work in Remarks~\ref{Remark: RelationWithCaporasoPand} and \ref{Remark: RelationWIthGrushevsky}.

In  \cite{oda79} Oda--Seshadri introduced, for a nodal curve $C$ (and $d=0$), the  \emph{stability space} $V^d(C)$ as the affine space of functions $\phi \colon \{ C_i \subseteq C \text{ an irreducible component}\}~\to~\mathbb{R}$ such that $\sum \phi(C_i)=d$.  For a \emph{nondegenerate} $\phi \in V^d(C)$ (i.e.~a $\phi$ not lying in a certain collection of affine hyperplanes) they proved that the moduli space of $\phi$-semistable sheaves, i.e.~sheaves whose multidegree is sufficiently close to $\phi$ (in a sense that we make precise in Definition~\ref{phistab}), is a proper subspace $\overline{J}_C(\phi)$ of the space of simple sheaves on $C$, which  we call a (fine) $\phi$-compactified Jacobian.

We extend Oda--Seshadri's approach to describe moduli spaces over $\Mb_{g, n}$.  In Section~\ref{Section: StabilitySpace} we construct a space $V^{d}_{g, n}$ of stability conditions for the universal stable pointed curve.  The affine space $V^d(C)$ associated to a nodal curve $C$ depends only on the dual graph $\Gamma_C$ of $C$.  Denoting by ${G}_{g,n}$ the set of isomorphism classes  of stable $n$-marked graphs of genus $g$, we define \[V_{g, n}^d \subseteq \prod_{\Gamma \in {G}_{g,n}} V^d(\Gamma)\] as the subspace consisting of those vectors $\phi =  ( \phi(\Gamma) \in V^d(\Gamma))_{\Gamma \in {G}_{g,n}}$ that satisfy a compatibility condition with respect to automorphisms and contractions of the dual graphs (see Definition~\ref{stabilityspacedef} for details).  For a \emph{nondegenerate} $\phi \in V_{g, n}^d$, we show that \[\{ \text{$\phi$-stable sheaves on stable curves}  \} \subseteq \operatorname{Simp}_{g, n}^{d}\] is a proper moduli space that we call a (fine) $\phi$-compactified universal  Jacobian $\Jb_{g,n}(\phi)$. We give the precise definition of $\Jb_{g,n}(\phi)$  in Section~\ref{Section: construction}.  When $\phi$ is nondegenerate, we show in Corollary~\ref{Cor: JbExists} that $\Jb_{g, n}(\phi)$ is a proper Deligne--Mumford stack, a result we deduce from Simpson's representability result \cite[Theorem~1.21]{simpson}.

The main result about the stability space is Theorem~\ref{pic=stab}, where we describe $V_{g, n}^d$ as the degree $d$ subspace of the real relative Picard group of the universal curve on $\Mb_{g,n}$.  As a by-product, in Corollary~\ref{isoalphan} we prove that an element $\phi \in V_{g,n}^d$ is uniquely determined by its components $\phi(\Gamma)$ for $\Gamma$ the dual graph of certain stable pointed curves with $2$ smooth irreducible components and at most $2$ nodes, so that in particular, two extensions $\Jb_{g,n}(\phi_1)$  and $\Jb_{g,n}(\phi_2)$ that coincide in codimension $2$ must be equal.

The main contribution of this paper is the description in Section~\ref{Section: Walls} of how the moduli spaces $\Jb_{g,n}(\phi)$ depend on $\phi$.  There we define $\phi_1$ to be equivalent to $\phi_2$ when $\phi_1$-stability coincides with $\phi_2$-stability.  The equivalence classes are the interiors of  rational bounded convex polytopes in $V_{g,n}^d$ that we call \emph{stability polytopes}.  We then exhibit in Theorem~\ref{stabilityspace} an explicit set of equations for the defining hyperplanes.
\begin{theorem*}
	For $g \ge 2, n \geq 1$ and $N = N(g, n)$ the number of boundary divisors in $\Mb_{g, n}$, there is an explicit isomorphism (Corollary~\ref{isoalphan}) of affine spaces $V^{g-1}_{g, n}\cong \mathbb{R}^{N-1} \times \mathbb{R}^n$. The decomposition of $V_{g,n}^{g-1}$ into stability polytopes is the product of the decomposition of $\mathbb{R}^{N-1}$ by integer translates of coordinate hyperplanes and the decomposition of $\mathbb{R}^{n}$ by integer translates of the following hyperplanes
	\begin{equation} \label{Eqn: SimplifiedHyperplanes}
		\left\{ \vec{x} \in \mathbb{R}^{n} \colon \ \sum_{i \in S} x_{i} - \frac{\ell}{2g-2}\sum_{i=1}^{n} x_{i} =0 \right\} \text{ for $\ell = 0, \dots, 2g-3, \ \emptyset \subsetneq S \subseteq \{ 1, \dots, n\}$.}
	\end{equation}
\end{theorem*}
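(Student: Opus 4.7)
The plan is to analyse the wall structure of $V^{g-1}_{g,n}$ graph by graph and then recombine via the isomorphism from Corollary~\ref{isoalphan}, choosing coordinates so that the walls diagonalise into a product.

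First, I would make the isomorphism $V^{g-1}_{g,n} \cong \mathbb{R}^{N-1} \times \mathbb{R}^n$ explicit. Writing $N-1$ boundary coordinates $(b_\Delta)$, one for each boundary divisor $\Delta$ of $\Mb_{g,n}$ modulo the total-degree constraint, and $n$ section coordinates $(x_i)$, one for each marked point. By Corollary~\ref{isoalphan}, $\phi$ is determined by its components $\phi(\Gamma)$ on dual graphs $\Gamma$ of vine curves (two smooth components joined by $1$ or $2$ nodes). These coordinates can be read off from such values; concretely, I would take $b_\Delta$ to be the value $\phi(C_1)$ on a distinguished $2$-noded vine curve whose associated boundary datum is $\Delta$, and $x_i$ to record the jump of $\phi$ as the section $\sigma_i$ is moved across a node.

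Next I would enumerate the walls. By definition $\phi$ is degenerate iff there is a graph $\Gamma$ and a proper subcurve $Z$ with $\phi|_Z \in \mathbb{Z}$, and by the codimension-$2$ rigidity in Corollary~\ref{isoalphan} all walls are determined by those coming from vine curves with $k \in \{1, 2\}$ nodes, taking $Z = C_1$. Using the realisation of $V^d_{g,n}$ as the degree-$d$ part of the real relative Picard group of the universal curve (Theorem~\ref{pic=stab}), the functional $\phi(C_1)$ on a vine curve with genus split $g_1 + g_2 = g$, marking split $S \sqcup S^c$, and $k$ nodes decomposes as a linear combination of the $b_\Delta$'s (coming from boundary classes) plus the $x$-expression
\begin{equation*}
\sum_{i \in S} x_i \; - \; \frac{2g_1 - 2 + k}{2g-2} \sum_{i=1}^n x_i,
\end{equation*}
plus a rational constant depending on $(g_1, k)$.

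The core of the proof is now to verify that, with the coordinate choice above, the walls $\phi(C_1) \in \mathbb{Z}$ coming from $2$-noded vine curves are exactly the coordinate hyperplanes $\{b_\Delta \in \mathbb{Z}\}$ in $\mathbb{R}^{N-1}$, while the walls from $1$-noded vine curves reduce to integer translates of hyperplanes~\eqref{Eqn: SimplifiedHyperplanes} with $\ell = 2g_1 - 1$ (odd $\ell$), and those $2$-noded walls whose boundary datum has already been absorbed contribute the even-$\ell$ hyperplanes $\ell = 2g_1$. As $(g_1, k)$ ranges over admissible pairs this gives precisely $\ell \in \{0, 1, \ldots, 2g-3\}$, and the symmetry $(g_1, S) \leftrightarrow (g - g_1, S^c)$ shows the list has no redundancy. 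The main obstacle is this verification step: arranging the coordinates so that the $b$-walls and $x$-walls genuinely factor (rather than sitting diagonally as $b + L(\vec{x}) \in \mathbb{Z}$), and checking the enumeration of $\ell$-values against the stability constraints on $(g_1, k, S)$. Executing this cleanly requires an exact understanding of how the boundary classes intersect vine curves of each type, and in particular how they combine with the sections $\sigma_i$ and the relative dualising sheaf in the relative Picard description.
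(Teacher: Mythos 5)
There is a genuine gap, and it sits exactly at the step you flag as the ``core'' of the argument. Your reduction ``by the codimension-$2$ rigidity in Corollary~\ref{isoalphan} all walls are determined by those coming from vine curves with $k \in \{1,2\}$ nodes'' is not what Corollary~\ref{isoalphan} gives and is in fact false. That corollary says the \emph{parameter} $\phi$ is determined by its values on the graphs $\Gamma(i,S)$ and $\Gamma_j$; it does not say that the degeneracy locus is cut out by the walls of those graphs alone. The correct reduction (Lemma~\ref{Lemma: HyperplanesByTwoComponents}) uses compatibility with contractions and only brings you down to loopless $2$-vertex graphs with an \emph{arbitrary} number $\alpha\ge 1$ of edges; the wall of such a graph with first vertex of genus $i$ is, in the coordinates of Corollary~\ref{isoalphan}, the hyperplane $x_S-\frac{\ell}{2g-2}x_{[n]}\in\mathbb{Z}$ with $\ell=2i-2+\alpha$. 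Since $\ell$ and $\alpha$ have the same parity, every hyperplane of \eqref{Eqn: SimplifiedHyperplanes} with $\ell$ odd, $\ell\ge 1$, arises \emph{only} from vine curves with at least $3$ nodes; restricting to $1$- and $2$-noded vine curves misses all of them. Your attempted patch --- attributing the odd-$\ell$ hyperplanes to the $1$-noded vine curves --- is inconsistent with the very product structure you are proving: after recentring at $\phi^{g-1}_{\text{can}}$, the wall of $\Gamma(i,S)$ is precisely $\alpha_{i,S}\in\mathbb{Z}$, a coordinate hyperplane in the $\mathbb{R}^{N-1}$ factor with no $x$-dependence (Equation~\eqref{blah} with $d=g-1$), not an $x$-hyperplane.

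A related problem is your choice of coordinates: taking $b_\Delta$ to be the value of $\phi$ on a $2$-noded vine curve cannot give coordinates on the $\mathbb{R}^{N-1}$ factor, because the boundary classes $\mathcal{O}(C^{\pm}_{i,S})$ have bidegree $(0,0)$ on every vine curve with at least $2$ nodes (Lemma~\ref{degreecalculation} and Remark~\ref{optimalonct}); the $C_{g,n}$-directions are invisible there, so the $b_\Delta$ must be read off the $1$-noded curves $\Gamma(i,S)$ and the $x_j$ off the $2$-noded curves $\Gamma_j$, i.e.\ the opposite of what you propose. Finally, the factorization you correctly identify as ``the main obstacle'' (that the multi-edge walls contain no $b$-terms) is not something you establish; in the paper it is exactly the content of the computation \eqref{changebasis}--\eqref{explicitformula}: invert the degree matrix of Lemma~\ref{degreecalculation} to write a parameter with coordinates $(\alpha_{i,S},x_j)$ as $\deg(L)$ for an explicit $L$, and observe that on any vine curve with $\ge 2$ nodes only the twisted sections $T_j$ contribute, so $\phi(\Gamma)$ there depends only on the $x_j$. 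Without this computation, and with the wall enumeration restricted to $\alpha\le 2$, the proposal does not prove the stated decomposition.
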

This is  Theorem~\ref{stabilityspace} in the special case when $d=g-1$ (and $g \geq 2, n \geq 1$).    When $d \ne g-1$, the decomposition of $V^{d}_{g,n}$ is similar but  the hyperplanes are translated.  The factors in the product decomposition correspond to two vector spaces $C_{g,n}\cong \mathbb{R}^{N-1}$ and $D_{g,n} \cong \mathbb{R}^n$ that we introduce in  Definition~\ref{CDT}.

The hyperplanes in \eqref{Eqn: SimplifiedHyperplanes} with $\ell=0$ are known as the resonance hyperplanes in the literature, so \eqref{Eqn: SimplifiedHyperplanes} defines a refinement of the resonance hyperplane arrangement.    The above description of the stability polytopes should be compared with a similar description in \cite{kasspa}.  There we carried out the analogous program for extensions of $\J^{g-1}_{g, n}$ over the moduli stack of treelike curves $\Mm_{g, n}^{\text{TL}} \subseteq \Mb_{g, n}$  using an affine space $V_{g, n}^{\text{TL}}$ analogous to $V^{d}_{g, n}$.  As we explain  in Remarks~\ref{compareoldpaper} and \ref{compareoldpaper2}, $V_{g, n}^{\text{TL}}$ is canonically isomorphic to  $C_{g, n}$. Theorem~\ref{stabilityspace} shows a considerable increase in the combinatorial complexity when passing from the problem of extending  $\J_{g, n}^{d}$ over $\Mm_{g,n}^{\text{TL}}$ to the problem of extending it over $\Mb_{g, n}$ as the resonance hyperplane arrangement is more complicated than the arrangement of coordinate hyperplanes.

The difference between $\Mm_{g,n}^{\text{TL}}$ and $\Mb_{g, n}$  is also demonstrated by the results in Section~\ref{Section: different}, where we describe how $\Jb_{g, n}(\phi)$ depends on $\phi \in V^{d}_{g,n}$.  Over treelike curves, we showed in \cite{kasspa} that, while changing $\phi$ changes  the set of $\phi$-stable sheaves, the corresponding Deligne--Mumford stacks $\Jb_{g, n}(\phi)|\Mm_{g,n}^{\text{TL}}$ are all isomorphic.  The situation over $\Mb_{g, n}$ is different.  We show
\begin{theorem*}
	When $\Mb_{g, n}$ is of general type, there exist nondegenerate $\phi_1, \phi_2 \in V_{g, n}$ such that $\Jb_{g, n}(\phi_1)$ and $\Jb_{g, n}(\phi_2)$ are not isomorphic as Deligne--Mumford stacks.
\end{theorem*}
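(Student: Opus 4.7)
The plan is to argue by contradiction, exploiting two kinds of rigidity forced by the general type hypothesis: the rigidity of $\Mb_{g,n}$ as a variety, and the rigidity of the canonical model / Iitaka fibration of $\Jb_{g,n}(\phi)$. First I would pick a wall $H$ in $V_{g,n}^d$ and nondegenerate $\phi_1, \phi_2$ lying in the two adjacent open stability polytopes; by the main Theorem~\ref{stabilityspace} and the analysis of wall-crossing, $\Jb_{g,n}(\phi_1)$ and $\Jb_{g,n}(\phi_2)$ share the dense open substack $\J^d_{g,n}$ but their loci of $\phi_i$-stable sheaves differ in codimension $\le 2$ (consistent with Corollary~\ref{isoalphan}), so the natural birational map between them is not an isomorphism. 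We must promote this to non-existence of \emph{any} abstract isomorphism of DM stacks.

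Next I would show that $\Jb_{g,n}(\phi)$ is itself of general type. The morphism $\pi\colon \Jb_{g,n}(\phi) \to \Mb_{g,n}$ is proper with generic fibre an abelian variety, and a direct computation of the dualising sheaf of a fine $\phi$-compactified Jacobian of a nodal curve (relatively trivial on the smooth locus, effective on the boundary of $\Jb/\Mb$) gives a decomposition $K_{\Jb(\phi)} = \pi^* K_{\Mb_{g,n}} + E_\phi$ with $E_\phi$ effective. Since $K_{\Mb_{g,n}}$ is big, so is $K_{\Jb(\phi)}$. More importantly, the section ring of $K_{\Jb(\phi)}$ coincides with that of $K_{\Mb_{g,n}}$, because the contribution of the fibres (trivial canonical on each abelian fibre) prevents new sections from appearing, so the Iitaka fibration of $\Jb_{g,n}(\phi)$ is precisely $\pi$ (up to the canonical model of $\Mb_{g,n}$, which equals $\Mb_{g,n}$ in the birationally rigid general type range one targets).

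Any hypothetical isomorphism $f\colon \Jb_{g,n}(\phi_1) \to \Jb_{g,n}(\phi_2)$ then descends to a birational automorphism of $\Mb_{g,n}$, and by the known triviality (or at least discreteness) of $\mathrm{Bir}(\Mb_{g,n})$ in the general type regime one may assume $f$ commutes with $\pi$. Restricting $f$ over the interior $\Mm_{g,n}$ produces an automorphism of $\J^d_{g,n}$ over $\Mm_{g,n}$; such an automorphism, by generic smoothness and rigidity of polarised abelian schemes, must be (up to $[-1]$) a translation by a universal section, i.e. by a class $\alpha$ in the relative Picard group. Under the isomorphism $V_{g,n}^d \cong \picrel^d_\mathbb{R}$ of Theorem~\ref{pic=stab}, translation by such an $\alpha$ acts on $V_{g,n}^d$ by translation by the corresponding integral (or possibly rationally $[-1]$-twisted) class, and carries $\phi_1$-stability to $(\phi_1+\alpha)$-stability. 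So a sufficient condition for the contradiction is to pick $\phi_1, \phi_2$ in distinct stability polytopes whose difference is not in this countable discrete subgroup; since the polytope decomposition is infinite and locally finite, this is possible.

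The main obstacle I expect is the second step: identifying the Iitaka fibration of $\Jb_{g,n}(\phi)$ with $\pi$. One has to rule out that pluricanonical sections of $\Jb(\phi)$ pick up extra contributions from the boundary $\Jb \setminus \J$ beyond those pulled back from $\Mb_{g,n}$; this amounts to a careful estimate of the divisor $E_\phi$ and an application of Kawamata's additivity / the easy addition formula for the Kodaira dimension of fibrations over general-type bases with good fibres. A secondary (but lighter) obstacle is pinning down the automorphism group of the universal Jacobian over $\Mm_{g,n}$, including the interaction of the involution $L \mapsto \omega^{\otimes k} \otimes L^\vee$ with the stability action on $V^d_{g,n}$, so that one can choose $\phi_1,\phi_2$ avoiding both translations and this involution.
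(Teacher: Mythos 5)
Your overall strategy (identify the Iitaka fibration of $\Jb_{g,n}(\phi)$ with the forgetful map, force any abstract isomorphism to commute with $\pi$, classify automorphisms over $\Mm_{g,n}$ as translation/inversion via a Franchetta-type statement, then choose stability parameters that this group cannot relate) is the same skeleton as the paper's proof (Lemma~\ref{isomustcommute}, Corollary~\ref{corfranchetta}, Lemma~\ref{isoiffsameorbit}, Corollaries~\ref{whentransitive} and \ref{noisocommuting}). But two steps as written have genuine gaps. First, your concluding combinatorial step is both misformulated and unjustified: the correct criterion is not that $\phi_2-\phi_1$ avoid a countable subgroup, but that the stability polytopes $\mathcal{P}(\phi_1)$ and $\mathcal{P}(\phi_2)$ lie in \emph{different orbits} of the action of $\widetilde{\pr}_{g,n}=\picrel_{g,n}(\mathbb{Z})\rtimes\mathbb{Z}/2$ on $\mathcal{P}_{g,n}$ (the group acts on polytopes, so moving $\phi_i$ within a polytope is irrelevant, and a difference outside the lattice does not prevent the polytopes from being in one orbit). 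Moreover ``the decomposition is infinite and locally finite, so this is possible'' cannot be the reason: the acting group is also infinite, and the action \emph{is} transitive for $g=0$ and for the seven pairs in \eqref{list}, all of which have infinite locally finite polytope decompositions. Establishing non-transitivity is exactly the content of Theorem~\ref{stabilityspace}, the fundamental-domain Lemma~\ref{fundamentaldomain} and Corollary~\ref{whentransitive}, and some such explicit wall/fundamental-domain analysis is unavoidable.

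Second, you pass from ``the isomorphism restricted over $\Mm_{g,n}$ is $L\mapsto L^{\pm1}\otimes\alpha$'' to ``it carries $\phi_1$-stability to translated stability'' without any argument on the boundary. Knowing the map only on the open universal Jacobian says nothing a priori about which multidegrees are parameterized over nodal curves; one must extend the identification across $\Mb_{g,n}\setminus\Mm_{g,n}$, which the paper does in Lemma~\ref{isoiffsameorbit} by producing an explicit integral twist by boundary components $C^-_{i,S}$ and comparing pullbacks of tautological sheaves, using the codimension-$2$ extension results for reflexive sheaves of the Appendix (Lemma~\ref{Lemma: G1S2}, Corollary~\ref{Cor: determinedcod2}). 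Without this bridge your contradiction does not close. Finally, a smaller but real error: $\Jb_{g,n}(\phi)$ is never of general type --- $K_{\Jb(\phi)}$ restricts trivially to the abelian-variety fibers, so it is not big; what is true (and what the paper proves, via easy addition and Ueno's theorem on the Iitaka conjecture for abelian-variety fibrations) is that $\kappa(\overline{J}_{g,n}(\phi))=3g-3+n$, which together with the fibers having Kodaira dimension $0$ identifies $\pi$ with the Iitaka fibration; your subsequent use of only this identification means the slip is repairable, but the bigness claim should be dropped. You should also note that the descended birational map of $\Mb_{g,n}$ need not be the identity but only a permutation of the marked points (via the identity on $\overline{M}_{g,0}$ and the automorphisms of the Fulton--MacPherson compactification), which must then be lifted to the compactified Jacobians, as in the paper's Lemma~\ref{isomustcommute}.
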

This is Corollary~\ref{nonisomorphic}, and the result answers a question of Melo in \cite[Question~4.15]{melo} (see Remark~\ref{esteves} for a description of the relation of that work to this paper).

We deduce the result from Corollary~\ref{noisocommuting} which states that, for all $(g,n)$ with $g>0$ except for those in the finite list \eqref{list}, there exist nondegenerate $\phi_1$ and $\phi_2$ such that $\Jb_{g,n}(\phi_1)$ and $\Jb_{g,n}(\phi_2)$ are not isomorphic as Deligne--Mumford stacks over $\Mb_{g, n}$.  In fact, in Section~\ref{Section: different} we  show  that the isomorphism classes of  $\Jb_{g, n}(\phi)$'s, considered as stacks over $\Mb_{g, n}$, are in bijection with the quotient of the set $ \mathcal{P}_{g,n} $ of stability polytopes by the action of the generalized dihedral group $\widetilde{\pr}_{g,n}$ of the relative Picard group of the universal curve. From this analysis we also deduce that, for fixed $(g,n)$, there are finitely many non-isomorphic $\Jb_{g,n}(\phi)$ for all $d \in \mathbb{Z}$ and all nondegenerate $\phi \in V_{g,n}^d$. 

In Section~\ref{Section: AbelJacobi} we give a second application of our description of $V^{d}_{g, n}$, namely a resolution of the indeterminacy of the Abel--Jacobi sections.  Recall that, given a vector  $(k; d_1, \ldots, d_n)$ of integers satisfying $k(2-2g) + d_1 + \ldots + d_n =d$, the rule
\begin{equation} \label{Eqn: AbelMap}
	(C, p_1, \ldots, p_n) \mapsto \omega^{\otimes -k}_C(d_1 p_1 + \ldots + d_n p_n)
\end{equation}
defines a morphism $\sigma_{k, \vec{d}} \colon \Mm_{g, n} \to \J^{d}_{g, n}$ and hence a rational map from $\Mb_{g, n}$ into any extension of $\J_{g, n}^{d}$.  Grushevsky--Zakharov raised the problem of resolving the indeterminacy of this map  in \cite[Remark~6.3]{grushevsky}.  In Corollary~\ref{Cor: AbelMapIndeterminacy},  we describe the locus of indeterminacy as
\begin{theorem*} 
	For $\phi$ nondegenerate, the locus of indeterminacy of $\sigma_{k, \vec{d}} \colon \Mb_{g, n} \dashrightarrow \Jb_{g, n}(\phi)$ is the closure of the locus of pointed curves $(C, p_1, \ldots, p_n)$ that have $2$ smooth irreducible components meeting in at least $2$ nodes with the property that $\omega_C^{\otimes -k}(d_1 p_1 + \ldots + d_n p_n)$ fails to be $\phi$-stable.
\end{theorem*}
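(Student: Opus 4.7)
The plan is to prove both containments for the indeterminacy locus $I$. Writing $L := \omega_C^{\otimes -k}(d_1 p_1 + \ldots + d_n p_n)$, let $W \subseteq \Mb_{g,n}$ be the closed substack where $L$ fails to be $\phi$-stable. The easy containment $I \subseteq W$ holds because whenever $L$ is $\phi$-stable at $(C, p_i)$, the assignment $(C, p_i) \mapsto L$ extends $\sigma_{k, \vec d}$ at that point. Combining this with the standard fact that a rational map from a smooth Deligne--Mumford stack to a proper one (applied here using properness of $\Jb_{g,n}(\phi) \to \Mb_{g,n}$ from Corollary~\ref{Cor: JbExists}) has indeterminacy of codimension at least $2$, we conclude that $I$ sits inside the codimension-$\geq 2$ part of $W$.

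The main step is to identify which codimension-$2$ strata of $\Mb_{g,n}$ actually contribute to $I$. Each such stratum corresponds to a stable dual graph $\Gamma$ with two edges, and on a smoothing with total space $\mathcal{C}$, the vertical twists $\mathcal{O}_{\mathcal{C}}\bigl(\sum a_i C_i\bigr)$ induce a sublattice $\Lambda_\Gamma$ of the degree-zero multidegree shifts on the central fibre. Using $C_i \cdot C_j = \#\{\text{nodes shared by } C_i, C_j\}$ for $i \neq j$ together with $\sum_j C_i \cdot C_j = 0$, one verifies by a finite case analysis (chains, stars, graphs with a loop, irreducible with two self-nodes, and parallel-edges) that $\Lambda_\Gamma$ equals the full degree-zero shift lattice for every $2$-edge stable graph $\Gamma$ \emph{except} the two-vertex two-parallel-edges graph, for which $C_1 \cdot C_1 = -2$ forces $\Lambda_\Gamma$ to have index $2$. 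For any other $\Gamma$, we can twist $L$ by a suitable element of $\Lambda_\Gamma$ to hit the unique $\phi$-stable multidegree (using nondegeneracy of $\phi$), giving a canonical extension of the rational map. This shows that $I$ is contained in the closure of the stated locus.

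For the reverse inclusion, at a generic point $(C = C_1 \cup_{p, q} C_2, p_i)$ of this locus with $L$ not $\phi$-stable, I would produce two smoothing families whose limits in $\Jb_{g,n}(\phi)$ are non-isomorphic. By the index-$2$ computation, the multidegree shift required to $\phi$-stabilize $L$ does not lie in $\Lambda_\Gamma$, so the $\phi$-stable limit must be a non-locally-free rank-$1$ torsion-free sheaf. Blowing up the total space at $p$ (respectively at $q$), pulling back $L$, and performing the twist-and-contract procedure produces a $\phi$-stable sheaf on $C$ whose non-locally-free locus is $\{p\}$ (respectively $\{q\}$); the two resulting sheaves are non-isomorphic, and the two blow-ups realize limits of two distinct $1$-parameter families (with distinct tangent directions in the local deformation space of $(C, p_i)$) that specialize the generic smooth curves to $(C, p_i)$. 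This shows that the map cannot be extended uniquely at $(C, p_i)$.

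The main obstacle is the uniform verification in the second paragraph that $\Lambda_\Gamma$ has full index for every $2$-edge stable graph except the parallel-edges one. This is a finite enumeration, and its conceptual content matches Corollary~\ref{isoalphan}: the $\phi$-stability condition is genuinely determined in codimension $2$ by dual graphs with two smooth components and at most two nodes, so the obstruction to extending $\sigma_{k, \vec d}$ can appear only on precisely the ``critical'' codimension-$2$ stratum picked out by the theorem, and taking closures then accounts for all higher-codimension degenerations of such curves.
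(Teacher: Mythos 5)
Your overall architecture (cut down to codimension $2$, analyze which codimension-$2$ strata obstruct the extension via twists by components of the special fibre) is genuinely different from the paper's, which instead builds a single global modification $\ODF$ of $\OD$ on the universal curve (Definition~\ref{defOD}, Proposition~\ref{Prop: StabilityVectorForAbelMap}) and then pins down the value of any extension via a tautological sheaf and the reflexivity statement of Corollary~\ref{Cor: determinedcod2}. But as written your argument has two concrete gaps. First, for the containment of the indeterminacy locus in $\overline{T}(\phi)$ you only treat generic points of codimension-$2$ strata; nothing in your argument excludes indeterminacy along strata of codimension $\ge 3$ that do not lie in $\overline{T}(\phi)$, and the closing appeal to Corollary~\ref{isoalphan} is circular, since ``the obstruction can only appear on the critical codimension-$2$ stratum'' is precisely what has to be proved. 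Moreover, even on a non-banana codimension-$2$ stratum, producing a limit along each one-parameter smoothing by twisting by $\mathcal{O}_{\mathcal{C}}(\sum a_i C_i)$ does not by itself give an extension of the morphism: the restriction of such a twist to the special fibre depends on the family, so you must either show all these limits coincide (and then invoke normality and Zariski's main theorem) or construct the twisting divisors on the universal curve over an \'etale neighbourhood of the stratum. The paper's $\ODF$ does exactly this job globally, in one stroke, for all strata at once.

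Second, your mechanism for the reverse inclusion is based on a false premise. For the two-vertex, two-edge graph there are exactly \emph{two} $\phi$-stable line-bundle bidegrees (Example~\ref{exampledollarsign} with $\alpha=2$); they differ by $1$, hence have opposite parities, while your lattice $\Lambda_\Gamma$ consists of the even shifts. Therefore a suitable twist along a one-parameter smoothing with regular total space always reaches a stable bidegree, and the limit along such a family is \emph{locally free}; the claim that ``the $\phi$-stable limit must be a non-locally-free sheaf'' is wrong. Consequently the blow-up-at-$p$-versus-$q$ construction does not establish indeterminacy as stated: blowing up a regular total space at a node produces a non-reduced special fibre rather than a semistable model, the two candidate sheaves cannot both be limits of one and the same family (separatedness of $\Jb_{g,n}(\phi)\to\Mb_{g,n}$), and you have not exhibited two actual families of pointed stable curves along which the Abel--Jacobi section has non-isomorphic limits. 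A correct version of this step would compare, say, smoothings with different local equations at the two nodes (producing limits of the two different stable bidegrees, or a non-locally-free limit for a non-regular total space). Note that the paper proves this direction without any test families: using a tautological sheaf (which requires $n\ge 1$, with $n=0$ handled separately) and Corollary~\ref{Cor: determinedcod2} it shows that wherever $\sigma$ extends its value is forced to be $\ODF$, so the indeterminacy locus is exactly the locus where $\ODF$ fails to be $\phi$-stable, which is then identified with $\overline{T}(\phi)$ by Proposition~\ref{Prop: StabilityVectorForAbelMap}.
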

This result is consistent with earlier work of Dudin.  In  \cite[Section~3]{dudin}, Dudin proved that, for certain $\phi$,  the locus of indeterminacy of  $\sigma_{k,\vec{d}}$ is contained in the closure of the locus of pointed curves that consist of $2$ smooth curves meeting in at least $2$ nodes satisfying the above stability condition \cite[Proposition~3.3]{dudin}.  The main new content of the above theorem is that the containment of the indeterminacy locus is in fact an equality.  (For a detailed explanation of which $\Jb_{g,n}(\phi)$ Dudin studies, see the discussion immediately after \cite[Corollary~5.4]{kasspa} and Remark~\ref{esteves}.)

The result can be described in terms of the \emph{degenerate} vector $\phi_{k,\vec{d}} \in V^{d}_{g, n}$ that is the multidegree of $\omega_C^{\otimes -k}(d_1 p_1 + \ldots + d_n p_n)$.  When $\phi$ is nondegenerate and sufficiently close to $\phi_{k,\vec{d}}$, our result states that the locus of indeterminacy is empty. For general $\phi$, the rational map $\sigma_{k,\vec{d}}\colon \Mb_{g,n}\dashrightarrow  \Jb_{g,n}(\phi)$ has indeterminacy that we can resolve as follows.  If $\phi_{0}$ is nondegenerate and sufficiently close to $\phi_{k,\vec{d}}$, then $\Jb_{g, n}(\phi)$ is related to $\Jb_{g, n}(\phi_{0})$ by a series of flips that correspond to the values of $t \in [0, 1]$ such that $t \phi_{0} + (1-t) \phi$ lies in the boundary of a stability polytope.  Indeed, the moduli spaces $\Jb_{g, n}(\phi)$ are  locally constructed   using GIT (through our use of \cite{simpson}), and the structure of these flips is described by Thaddeus in \cite{thaddeus}. The above theorem shows that the indeterminacy of $\sigma_{k,\vec{d}}$ is resolved by modifying $\Jb_{g, n}(\phi)$ by these flips.

The relation of this result to the work of  Grushevsky--Zakharov \cite{grushevsky} is complicated as they consider $\sigma_{k,\vec{d}}$ as a rational map into the extension $\mathcal{Y}'_{g,n}$ of $\J^{0}_{g, n}$ given by Mumford's rank~$1$ degenerations,  and this extension is different from, but related to, the $\Jb_{g, n}(\phi)$'s.  We discuss the relation with $\mathcal{Y}'_{g,n}$ in Remark~\ref{Remark: RelationWIthGrushevsky}.


Observe that  we resolve the indeterminacy by modifying the target (the compactified universal Jacobian) of the Abel--Jacobi section  rather than source (the moduli space of curves)  as is often done when resolving indeterminacy.  An approach to resolving indeterminacy by modifying the source is worked out by  David Holmes in  \cite{holmes}.  He analyzes $\sigma_{k,\vec{d}}$ when $k(2-2g) + d_1 + \ldots + d_n =0$ and produces a morphism from an open substack of an explicit toric blowup of $\overline{\mathcal{M}}_{g,n}$ into the separated stack parameterizing multidegree $0$ line bundles on stable curves.  Holmes uses this resolution to study the double ramification cycle, a topic we do not study here.  After this paper was submitted for publication, Marcus--Wise published \cite{marcus}  to the repository the arXiv.  In that paper, they analyze the Abel--Jacobi section in a manner similar to that of Holmes.


\subsection{Organization of the paper}  In Section~\ref{background} we collect background material on moduli spaces of curves. In Section~\ref{graphs} we fix the notation for stable graphs, in Section~\ref{contractions} we define a notion of contraction, in Section~\ref{notationmoduli} we discuss the stratification of $\Mb_{g,n}$ by topological type and in Section~\ref{Picard} we describe the relative Picard group of the universal curve $\pi \colon \Cb_{g,n} \to \Mb_{g,n}$. In Section~\ref{Section: StabilitySpace} we introduce the universal stability space $V_{g,n}^d$ and prove two results that describe it explicitly: Theorem~\ref{pic=stab} and Corollary~\ref{isoalphan}. In Section~\ref{Section: construction} we define the stacks $\Jb_{g,n}(\phi)$ and prove that they are $k$-smooth Deligne--Mumford stacks when $\phi$ is nondegenerate. In Section~\ref{Section: Walls} we introduce the stability polytope decomposition $\mathcal{P}_{g,n}$ and prove Theorem~\ref{stabilityspace}, which gives an explicit description of the stability hyperplanes of the stability spaces $V_{g,n}^d$. In Section~\ref{final} we apply our results to resolve the indeterminacy of the Abel--Jacobi sections (Section~\ref{Section: AbelJacobi}) and to enumerate the different $\Jb_{g,n}(\phi)$ (Section~\ref{Section: different}). Section \ref{Section: Appendix} is the Appendix, where we collect some algebra lemmas needed in Section~\ref{final}.

\subsection{Conventions}

We denote by $[n]$  the set $\{1, \ldots, n\}$. If $S \subseteq [n]$, we write $S^c$ for $[n] \setminus S$. For a given subset $S \subseteq [n]$ and $f \colon S \to \mathbb{Z}$, we denote by $f_S$ the sum $\sum_{j \in S} f(j)$. By $\delta_{1,g}$ we denote the Kronecker delta:
\[
\delta_{1,g} = \begin{cases} 1 & \textrm{ when } g=1, \\ 0 & \textrm{otherwise.}\end{cases}
\]

We work over a fixed algebraically closed field $k$ of characteristic $0$ throughout.

A  \emph{curve} over a field $F$ is a $\spec(F)$-scheme $C/\spec(F)$ that is proper over $\spec(F)$, geometrically connected, and pure of dimension $1$. A curve $C/\spec(F)$ over $F$ is a \emph{nodal curve} if $C$ is geometrically reduced and the completed local ring of $C \times_{\spec(F)} \spec(\overline{F})$ at a non-regular point is isomorphic to $\overline{F}[[x,y]]/(xy)$.  Here $\overline{F}$ is an algebraic closure of $F$.

A \emph{family of curves} over a $k$-scheme $T$ is a proper, flat morphism $C \to T$ whose fibers are curves. A family of curves $C \to T$ is a \emph{family of nodal curves} if the fibers are nodal curves.

A \emph{family of rank~$1$ torsion-free sheaves} over a family of curves $C \to T$ is a rank~$1$ sheaf $F$ on $C$, flat over $T$, whose fibers over the geometric points are torsion-free.

If $F$ is a rank~$1$ torsion-free sheaf on a nodal curve $C$ with irreducible components $C_i$, we define the \emph{multidegree} of $F$ by ${\deg}(F) := (\deg(F_{C_{i}}))$.  Here $F_{C_{i}}$ is the maximal torsion-free quotient of $F \otimes \mathcal{O}_{C_{i}}$. We define the \emph{(total) degree} of $F$ to be $\deg_C(F):=\chi(F)-1+p_a(C)$ where $p_a(C)= h^1(C, \mathcal{O}_C)$ is the arithmetic genus of $C$. The total degree and the multidegree of $F$ are related by the formula $\deg_C(F) = \sum \deg_{C_i} F - \delta_C(F)$, where $\delta_C(F)$ denotes the number of nodes of $C$ where $F$ fails to be locally free.


\section{Background}

\label{background}

\subsection{Graphs} \label{graphs} A graph $\Gamma$ is a tuple $(\operatorname{Vert}, \operatorname{HalfEdge}, \operatorname{a}, \operatorname{i})$ consisting of a finite set of vertices $\operatorname{Vert}$, a finite set of half-edges $\operatorname{HalfEdge}$, an assignment function $\operatorname{a} \colon \operatorname{HalfEdge} \to \operatorname{Vert}$, and a fixed point free involution $\operatorname{i} \colon \operatorname{HalfEdge} \to \operatorname{HalfEdge}$. The edge set  is defined as the quotient set $\operatorname{Edge}:= \operatorname{HalfEdge} / \operatorname{i}$. The endpoint of a half-edge $h \in \operatorname{Edge}$ is defined to be $v=a(h)$. A loop based at $v$ is an edge whose two endpoints coincide.

A $n$-marked graph is a graph $\Gamma$ together with a (genus) map $g \colon \operatorname{Vert}(\Gamma) \to \mathbb{N}$ and a (markings) map $p \colon \{ 1, \ldots, n \} \to \operatorname{Vert}(\Gamma)$.  We call $g(v)$ the genus of $v \in \operatorname{Vert}(\Gamma)$.  If $v=p(j)$, then we say that the $j$-th marking lies on the vertex $v$.

 A subgraph $\Gamma'$ of $\Gamma$ is always assumed to be proper ($\operatorname{Vert}(\Gamma') \subsetneq \operatorname{Vert}(\Gamma)$) and complete (if $h \in \operatorname{HalfEdge}(\Gamma)$ satisfies $a(h), a(i(h)) \in \operatorname{Vert}(\Gamma')$, then $h, i(h) \in \operatorname{HalfEdge}(\Gamma')$).  A subgraph of a $n$-marked graph is tacitly assumed to be given the induced genus and marking maps.

We say that a $n$-marked graph $\Gamma$ is \emph{stable} if it is connected (in the obvious sense, a bit tedious to write down), and if for all $v$ with $g(v)=0$, the sum of the number of half-edges with $v$ as an endpoint plus the number of markings lying on $v$ is at least $3$. The (arithmetic) \emph{genus} of $\Gamma$ is $g(\Gamma) := \sum_{v \in \operatorname{Vert}(\Gamma)} g(v) -\# \operatorname{Vert}(\Gamma)+  \# \operatorname{Edge}(\Gamma)+1 $.

An \emph{isomorphism} of $\Gamma= (\operatorname{Vert}, \operatorname{HalfEdge}, \operatorname{a}, \operatorname{i})$ to  $\Gamma'= (\operatorname{Vert}', \operatorname{HalfEdge}', \operatorname{a}', \operatorname{i}')$ is a pair of bijections $\alpha_V \colon \operatorname{Vert} \to \operatorname{Vert}'$ and $\alpha_{\operatorname{HE}} \colon \operatorname{HalfEdge} \to \operatorname{HalfEdge}'$ that satisfy the compatibilities $\alpha_{\operatorname{HE}} \circ i = i'$ and $\alpha_V \circ a = a'$. If $\Gamma$ and $\Gamma'$ are endowed with structures of $n$-marked graphs by the maps $(g,p)$ and by $(g',p')$ respectively, $(\alpha_V, \alpha_{\operatorname{HE}})$ is an isomorphism of $n$-marked graphs if it also satisfies the compatibilities $\alpha_V \circ p = p'$ and $\alpha_V \circ g = g'$. An \emph{automorphism} is an isomorphism of a graph to itself.

We fix once and for all a finite set ${G}_{g,n}$ of stable $n$-marked  graphs of genus $g$, one for each isomorphism class.

\subsection{Contractions}
 \label{contractions}
 We will need a notion  for contractions of stable graphs. This notion is ubiquitous in the literature on moduli of curves (see for example \cite[Appendix]{graberpanda}, where contractions are key to giving an algorithmic description of the intersection product of tautological classes). Here we first introduce the notion of a strict contraction and then define a contraction to be a strict contraction followed by an isomorphism. Unlike in \cite{graberpanda} and in other sources, our contractions contract precisely $1$ edge. 

If $\Gamma$ is a $n$-marked graph and $e \in \operatorname{Edge}(\Gamma)$ is an edge,  the \emph{strict contraction} of $e$ in $\Gamma$ is the graph $\Gamma_e$ obtained from $\Gamma$ by \begin{enumerate} \item removing the half-edges of $\Gamma$ corresponding to $e$, \item replacing the two (possibly coinciding) endpoints $v_1$ and $v_2$ of $e$ by a unique vertex $v_e$, \item extending the marking function $p$ of $\Gamma$ to $v_e$ by $p_e(j):=v_e$ whenever $p(j)$ equals $v_1$ or $v_2$ and \item extending the genus function $g$ of $\Gamma$ to $v_e$ by \[g_e(v_e):= \begin{cases}g(v_1)+g(v_2) & \textrm{ when } e \textrm{ is not a loop},\\ g(v_1)+1 & \textrm{ when } e \textrm{ is a loop.} \end{cases}\]\end{enumerate}
If $\Gamma$ and $\Gamma'$ are $n$-marked graphs, a \emph{contraction} $c\colon \Gamma \to \Gamma'$ is the choice of an edge $e$ of $\Gamma$, and of an isomorphism of $\Gamma_e$ (the strict contraction of $e$ in $\Gamma$) to $\Gamma'$. The contraction $c$ is completely determined by the two maps it induces $c_V \colon \operatorname{Vert}(\Gamma) \to \operatorname{Vert}(\Gamma')$ (on vertices) and $c_{\operatorname{HE}} \colon \operatorname{HalfEdge}(\Gamma) \to \operatorname{HalfEdge}(\Gamma')$ (on half-edges).

\subsection{Moduli of curves} \label{notationmoduli}

In this paper we always assume that $g,n$ are natural numbers satisfying  $2g-2+n>0$. Under this assumption,  the moduli stack  $\Mb_{g,n}$  parameterizing families of stable $n$-pointed curves of arithmetic genus $g$ is a $k$-smooth and proper Deligne--Mumford stack. We will denote by $\pi \colon \Cb_{g,n} \to \Mb_{g,n}$ the universal curve, and by $\omega_{\pi}$ its relative dualizing sheaf.

	If $(C, p_1, \ldots, p_n)$ is a stable pointed curve, we define its \emph{dual graph} $\Gamma_{C}$ to be the $n$-marked graph whose vertices are the irreducible components of $C$,  whose edges are the nodes of $C$, whose genus map is given by assigning the geometric genus to each vertex, and whose markings map is the assignment $p \colon \{ 1, \ldots, n \} \to \operatorname{Vert}(\Gamma_{C})$ such that  $p(j)$ is the vertex containing $p_j$.

For each $\Gamma \in {G}_{g,n}$, the locus $\mathcal{M}_{\Gamma} \subseteq \Mb_{g,n}$ of stable curves whose dual graph is isomorphic to $\Gamma$ is locally closed. We are now going to fix a notation for some special stable graphs $\Gamma$ (and their corresponding loci $\mathcal{M}_{\Gamma}$), which will play an important role in this paper.

 For all pairs $(i,S)$ with $0\leq i \leq g$ and $S \subseteq [n]$, such that if $i=0$ then $|S| \geq 2$, and if $i=g$ then $|S| \leq n-2$, we define $\Gamma(i,S)$ to be the graph with $2$ vertices of genera $i$ and $g-i$ joined by $1$ edge with markings $S$ and $S^c$ respectively. The closure of the locus $\mathcal{M}_{\Gamma(i,S)}$ in $\Mb_{g,n}$ is a divisor that we we will denote by $\Delta(i,S)$. In this paper we will assume (in summation formulas etc.) that the set of indices $\{(i,S)\}$ for $0 \leq i \leq g$ and $S \subseteq [n]$ satisfies the additional requirement that
 \begin{enumerate}
  \item if $n=0$, then $i < g-i$,
 \item if $n \geq 1$, then $1 \in S$.
 \end{enumerate}
We adopt this convention so that there is a bijection between the set of indices $\{(i,S)\}$ and
the set of boundary divisors $\Delta(i,S)\subsetneq \Mb_{g,n}$ whose inverse image $\pi^{-1}(\Delta(i,S))$ in the universal curve
$\pi \colon \Cb_{g,n}\to \Mb_{g,n}$ consists of $2$ irreducible components.

When $g \geq 1$ and $n \geq 1+ \delta_{1,g}$, for each $j = 1+ \delta_{1,g}, \ldots, n$ we denote by $\Gamma_j$ the graph with $2$ vertices of genera $0$ and $g-1$ respectively, joined by $2$ edges, and with marking $j$ on the first vertex and all other markings on the second vertex.

Another collection of curves that will play a crucial role in this paper, and that includes those discussed in the previous two paragraphs, consists of the so-called \emph{vine curves}. These are by definition curves with $2$ smooth irreducible components or, equivalently, curves whose dual graph has $2$ vertices and no loops.

\subsection{The relative Picard group of the universal curve}
We will often need to work with the relative Picard group of the universal curve  $\pi \colon \Cb_{g,n} \to \Mb_{g,n}$, and with its affine subspaces of elements of fixed fiberwise degree. For this reason, we introduce the following definition/notation.
\begin{definition}
We denote by
\[ \picrel_{g,n}(\mathbb{Z}): = \Pic(\Cb_{g,n}) / \pi^* ( \Pic( \Mb_{g,n})) \]
the  relative Picard group of the universal curve $\pi$ and by
\[\picrel_{g,n}(\mathbb{R}): = \picrel_{g,n}(\mathbb{Z}) \otimes_{\mathbb{Z}} \mathbb{R}\]
the relative Picard group of real line bundles.

For every $d \in \mathbb{Z}$ (resp.~$\in \mathbb{R}$), we let $\picrel^d_{g,n}(\mathbb{Z})$ (resp.~$\picrel_{g,n}^d(\mathbb{R})$) be the affine subspace of $\picrel_{g,n}(\mathbb{Z})$ (resp.~of $\picrel_{g,n}(\mathbb{R})$) of elements of fiberwise degree $d$.
\end{definition}

Let $\Sigma_j$ be the $j$-th section of the universal curve $\pi \colon \Cb_{g,n} \to \Mb_{g,n}$, and $\omega_{\pi}$ be the relative dualizing sheaf.

We make the following (canonical) choice for a base point in $\picrel^d_{g,n}(\mathbb{R})$: \begin{equation} \label{basepoint1} \begin{cases} \frac{d}{2g-2} \cdot \omega_{\pi} & \textrm{ when } g \geq 2, \\ d \cdot \Sigma_1 & \textrm{ when } g\leq 1. \end{cases} \end{equation}
The choice of a base point makes $\picrel_{g,n}^d(\mathbb{R})$ into a vector space isomorphic to $\picrel^0_{g,n}(\mathbb{R})$.

We now recall what will later be needed about the structure of the free abelian group $\picrel_{g,n}(\mathbb{Z})$ and the structure of its subgroup $\picrel_{g,n}^0(\mathbb{Z})$. The results we state essentially follow from the description of the Picard group of $\Mb_{g,n}$ that was originally given by Arbarello--Cornalba in \cite{acpicard}.

\begin{definition} \label{generators} For each pair $(i,S)$ satisfying the assumptions of Section~\ref{notationmoduli}, we define ${C}_{i,S}^{+}$ and ${C}_{i,S}^{-}$ to be the $2$ components of the universal curve $\pi \colon \Cb_{g,n} \to \Mb_{g,n}$ over the boundary divisor $\Delta(i,S)$. The component $C_{i,S}^+$ is the one that contains the first marked point, and, when $n=0$, it is the component of lowest genus. We define $W_{g,n}$ to be the subgroup of $\picrel_{g,n}^0(\mathbb{Z})$ generated by the line bundles $\mathcal{O}(C_{i,S}^+)$.

For $j=1+ \delta_{1,g}, \ldots, n$, define the \emph{twisted sections} \[ T_j:=\begin{cases}\mathcal{O} (\Sigma_j - \Sigma_1) & \textrm{ if } g=1, \\ \mathcal{O}((2g-2)\Sigma_j) \otimes \omega_{\pi}^{\otimes -1} & \textrm{ if } g \geq 2. \end{cases}\]  (When $g=0$ we have intentionally defined no twisted sections).
\end{definition}

\begin{fact} \label{arbarellocornalba} The group  $\picrel_{g,n}(\mathbb{Z})$ is \emph{freely} generated \begin{enumerate} \item by all components $\mathcal{O}(C^+_{i,S})$ over the boundary divisors and by one section $\Sigma_i$ when $g=0$, \item by all components $\mathcal{O}(C^+_{i,S})$ and by all sections  $\Sigma_1, \ldots, \Sigma_n$ when $g=1$, \item by all components $\mathcal{O}(C^+_{i,S})$, by all sections $\Sigma_1, \ldots, \Sigma_n$ and by the relative dualizing sheaf $\omega_{\pi}$ when $g \geq 2$.\end{enumerate} \end{fact}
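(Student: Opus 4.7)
I would deduce the Fact from Arbarello--Cornalba's theorem giving a free basis of $\Pic(\Mb_{g,n})$, combined with the canonical identification $\Cb_{g,n} \cong \Mb_{g,n+1}$ under which $\pi$ becomes the forgetful morphism dropping the $(n+1)$-st marked point. Since $\pi$ admits sections, $\pi^* \colon \Pic(\Mb_{g,n}) \to \Pic(\Mb_{g,n+1})$ is injective, so $\picrel_{g,n}(\mathbb{Z})$ is free of rank equal to the difference of the ranks of the two Picard groups; it therefore suffices to exhibit a generating set of the correct size and show that its images are linearly independent in the quotient.

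Applying the standard Arbarello--Cornalba pullback formulas, $\pi^*\lambda = \lambda$ and $\pi^*\delta_{\mathrm{irr}} = \delta_{\mathrm{irr}}$ kill those two classes in the quotient; the comparison $\pi^*\psi_i = \psi_i - \delta(0,\{i,n+1\})$ identifies each $\psi_i$ with $i \le n$ with the boundary class $\delta(0,\{i,n+1\})$; and $\pi^*\delta(i,S) = \delta(i,S) + \delta(i,S \cup \{n+1\})$ collapses each pair of boundaries on $\Mb_{g,n+1}$ lying over $\Delta(i,S)$ to a single class in the quotient. A $\mathbb{Z}$-basis of the quotient is therefore given by $\psi_{n+1}$, by one boundary class per pair $(i,S)$, and by the boundaries $\delta(0,\{j,n+1\})$ for $j \in [n]$.

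Each of these is geometrically one of the proposed generators: $\psi_{n+1}$ on $\Mb_{g,n+1}$ is the cotangent line at the universal point of the universal curve, hence equals the relative dualizing sheaf $\omega_{\pi}$; the boundary $\delta(0,\{j,n+1\})$ is the closure of the locus where $p_j$ and $p_{n+1}$ lie on a rational tail, i.e.~the $j$-th section $\Sigma_j$ of $\pi$; and for each pair $(i,S)$ the two relevant boundaries on $\Mb_{g,n+1}$ are precisely the two components $C^-_{i,S}$ and $C^+_{i,S}$ of $\pi^{-1}(\Delta(i,S))$, so that (using $\mathcal{O}(C^+_{i,S})+\mathcal{O}(C^-_{i,S}) = \pi^*\mathcal{O}(\Delta(i,S))$) the choice of $\mathcal{O}(C^+_{i,S})$ as representative yields the basis claimed in (3).

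Items (1) and (2) follow from (3) by invoking further relations valid only in low genus. When $g=0$, Keel's theorem that $\Pic(\Mb_{0,m})$ is generated by boundary classes lets one drop both $\omega_\pi$ and every section in favor of boundary components; when $g=1$, Mumford-type relations among $\lambda$, $\omega_\pi$, and the boundary classes allow the elimination of $\omega_\pi$ and all but one of the $\Sigma_j$. The main technical obstacle is the combinatorial bookkeeping that, for each $(i,S)$, matches the surviving boundary class on $\Mb_{g,n+1}$ with $\mathcal{O}(C^+_{i,S})$ rather than $\mathcal{O}(C^-_{i,S})$ under the conventions of Section~\ref{notationmoduli}, and tracking the low-genus relations in the $g=1$ case.
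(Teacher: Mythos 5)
Your route is the same as the paper's: identify $\pi \colon \Cb_{g,n} \to \Mb_{g,n}$ with the forgetful map $\Mb_{g,n+1} \to \Mb_{g,n}$, take Arbarello--Cornalba's generators of $\Pic(\Mb_{g,n+1})$, quotient by the image of $\pi^*$ using the standard pullback formulas, and invoke the extra low-genus relations (Keel in genus $0$, the $\lambda$/$\psi$/boundary relations in genus $1$) to reduce the generating set; this is exactly how the paper argues, via \cite[Theorem~2.2, Lemma~1.2]{ac}.

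Two points need repair, though neither sinks the argument. First, the identification ``$\psi_{n+1} = \omega_\pi$'' is false for $n \ge 1$: the cotangent line at the moving point is the \emph{log} relative dualizing sheaf, i.e.\ $\psi_{n+1}$ differs from $c_1(\omega_\pi)$ by the sum of the boundary classes $\Delta(0,\{j,n+1\})$, $j \in [n]$ (the paper's proof records precisely this correction when it expresses $\omega_\pi$ in terms of $\psi_{n+1}$ and these divisors). The error is harmless here only because those correcting divisors are the sections $\Sigma_j$, which already belong to your proposed generating set, so exchanging $\psi_{n+1}$ for $\omega_\pi$ is a unimodular (triangular) change of basis of the quotient --- but you should say this rather than assert the false equality. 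Second, your claim that the listed classes give a $\mathbb{Z}$-basis of $\Pic(\Mb_{g,n+1})/\pi^*\Pic(\Mb_{g,n})$ tacitly uses that Arbarello--Cornalba's generators are \emph{free}, which holds only for $g \ge 3$; since item (3) includes $g=2$, you must also note that the single genus-$2$ relation involves only $\lambda$, $\delta_{\mathrm{irr}}$ and the full sums of genus-$1$ boundary classes, hence is the pullback of the corresponding relation on $\Mb_{2,n}$ and does not affect the quotient --- this is why the paper invokes relation (c) of \cite[Theorem~2.2]{ac} for $g=2$ as well. (Minor: for $n=0$ the map $\pi$ has no sections, so justify injectivity of $\pi^*$ via $\pi_*\mathcal{O} = \mathcal{O}$ and the projection formula, or observe that you never actually need it, since a generating set that is $\mathbb{Z}$-independent in the quotient is automatically a free basis.)
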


This result was stated incorrectly for $g=0,1$ in a previous version of this paper. Many thanks to Filippo Viviani for catching the error.

\begin{proof}	
	Identify the universal curve $\pi \colon \Cb_{g,n} \to \Mb_{g,n}$ with the map forgetting the last point and stabilizing $\pi \colon \Mb_{g,n+1} \to \Mb_{g,n}$. As observed in \cite{acg2}, the Picard group of each moduli stack of stable pointed curves is free, and rational and homological equivalence coincide for codimension $1$ cycles.

By \cite{acg2}, integral generators of the Picard group of $\Mb_{g,n+1}$ are $\lambda:= c_1(\pi_* (\omega_{\pi}))$ together with the $\psi$-classes $\psi_i=\sigma_i^*(\omega_{\pi})$ for $i=1,\ldots, n+1$, the boundary divisors $\Delta(i,S)$ introduced in Section~\ref{notationmoduli}, and the boundary divisor $\Delta_{\text{irr}}$ that generically parameterizes irreducible singular curves. These generators are free when $g \geq 3$ but there are relations for $g=0,1$ and $2$.

Because $\lambda$ is the pullback via $\pi$ of the analogue class from $\Mb_{g,n}$, it is zero in the quotient group. The same happens with $\Delta_{\text{irr}}$.  Another element in the Picard group (that will appear later in this proof via \cite[Lemma~1.2 and Theorem~2.2.b]{ac}) is \[\kappa_1 = \pi_*(c_1 (\omega_{\pi}(\Sigma_1 + \ldots + \Sigma_n))^2).\] This class can be expressed in terms of the previous basis by means of Mumford's relation:
	\[
	12 \lambda = \kappa_1 + \delta - \psi
	\]
	(here $\delta$ is the sum of all boundary divisors and $\psi$ is the sum of all $\psi_i$'s). In the relative Picard group we therefore have that $\kappa_1$ equals $\psi-\delta$.

	We then eliminate more generators by using the relations indicated in \cite[Lemma~1.2 and Theorem~2.2]{ac}.  Under the identification of $\Cb_{g,n}$ with $\Mb_{g,n+1}$ each section $\Sigma_j$ corresponds to the boundary divisor $\Delta(0, \{j, n+1\})$, and the relative dualizing sheaf $\omega_{\pi}$ (or rather its first Chern class) corresponds to \[\psi_{n+1} - \Delta(0, \{1, n+1\}) - \ldots - \Delta(0, \{n, n+1\}).\] Then use the right-hand side of the equalities in \cite[Lemma 1.2]{ac} to eliminate redundant generators. This gives that the components over the boundary divisors, the sections, and the relative dualizing sheaf are integral generators of the relative Picard group. When $g \geq 3$ by \cite[Theorem~2.2]{ac} there is no other relation, and this concludes our proof.
	
When $g \leq 2$ there are more relations indicated in \cite[Theorem~2.2]{ac}. 	When $g~=~2$, the extra relation given in \cite[Theorem~2.2.(b)]{ac} is redundant. When $g\leq 1$ use the relations indicated in (c) and (d) of \cite[Theorem~2.2]{ac} to eliminate the relative dualizing sheaf when $g=1$, and all but one section when $g=0$.
\end{proof}  

By singling out the degree zero elements, we deduce the following corollary.
\begin{corollary} \label{piczero} The vector space  $\picrel_{g,n}^0(\mathbb{R})$ is \emph{freely} generated by the components $\mathcal{O}(C^+_{i,S})$ and, when $g \geq 1$, by the twisted sections $T_j$. 
\end{corollary}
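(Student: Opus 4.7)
The plan is to deduce the corollary directly from Fact~\ref{arbarellocornalba} by isolating the degree-zero part of its explicit free $\mathbb{Z}$-basis.  Every boundary generator $\mathcal{O}(C^+_{i,S})$ is a vertical divisor on $\Cb_{g,n}$ supported over a boundary divisor of $\Mb_{g,n}$ and so has fiberwise degree $0$; hence all of them already belong to $\picrel_{g,n}^0(\mathbb{Z})$.  The degree map $\deg\colon\picrel_{g,n}(\mathbb{Z})\to\mathbb{Z}$ is therefore governed entirely by its values on the remaining free generators, namely: none for $g=0$; just $\Sigma_1$ (of degree $1$) for $g=1$; and $\omega_\pi$ (of degree $2g-2$) together with $\Sigma_1,\dots,\Sigma_n$ (each of degree $1$) for $g\geq 2$.

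For $g=0$ the result is immediate because every free generator is already a boundary component (and no twisted sections are defined).  For $g=1$ the section $\Sigma_1$ has degree $1$ and splits $\deg$, so $\picrel_{g,n}^0(\mathbb{Z})$ is exactly the $\mathbb{Z}$-span of the $\mathcal{O}(C^+_{i,S})$; the twisted sections $T_j=\Sigma_j-\Sigma_1$ for $j\geq 2$ can then be written as integer combinations of these boundary components via the genus-one relations from \cite[Theorem~2.2]{ac} already invoked in the proof of Fact~\ref{arbarellocornalba}, and they furnish the advertised labeling.  For $g\geq 2$ I would perform the change of generating set $(\omega_\pi,\Sigma_1,\dots,\Sigma_n)\rightsquigarrow(\Sigma_1,T_1,\dots,T_n)$, with $T_j=(2g-2)\Sigma_j-\omega_\pi$: in the new basis $\Sigma_1$ is the unique generator with nonzero degree and every $T_j$ lies in $\ker(\deg)$, so $\picrel_{g,n}^0(\mathbb{Z})$ is freely spanned by $\{\mathcal{O}(C^+_{i,S})\}\cup\{T_1,\dots,T_n\}$.

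The only step requiring real verification is that the change of basis in the $g\geq 2$ case is an isomorphism of free $\mathbb{Z}$-modules, i.e.\ that the $(n+1)\times(n+1)$ transition matrix between $(\omega_\pi,\Sigma_1,\dots,\Sigma_n)$ and $(\Sigma_1,T_1,\dots,T_n)$ has unit determinant over $\mathbb{Z}$ despite the coefficient $2g-2$ appearing in the definition of $T_j$.  This is the main subtle point I expect to encounter; the remainder of the argument is a straightforward bookkeeping of the degrees of the free generators supplied by Fact~\ref{arbarellocornalba}.
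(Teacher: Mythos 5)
Your overall route is the same as the paper's: deduce Corollary~\ref{piczero} from Fact~\ref{arbarellocornalba} by isolating the fiberwise-degree-zero part. But the one step you defer---unimodularity of the change of generators in the $g\ge 2$ case---is exactly where the argument breaks, and it does not hold. The transition matrix from $(\omega_\pi,\Sigma_1,\dots,\Sigma_n)$ to $(\Sigma_1,T_1,\dots,T_n)$ has determinant $\pm(2g-2)^{n-1}$, a unit only when $n\le 1$. Equivalently, the sublattice of $\picrel_{g,n}^{0}(\mathbb{Z})$ generated by the $\mathcal{O}(C^{+}_{i,S})$ and the $T_j$ has index $(2g-2)^{n-1}$: the class $\Sigma_1-\Sigma_2$ has fiberwise degree $0$, and since $T_1-T_2=(2g-2)(\Sigma_1-\Sigma_2)$, the freeness in Fact~\ref{arbarellocornalba} shows that for $g\ge 2$, $n\ge 2$ it is not an integral combination of the proposed generators. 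So the bookkeeping you describe can only yield that these classes form a basis after tensoring with $\mathbb{Q}$ (or $\mathbb{R}$), i.e.\ that they generate up to finite index; the integral ``freely generated'' statement is not reached along your route, given Fact~\ref{arbarellocornalba} as stated. (The rational/real statement is in fact what is used downstream, e.g.\ in Proposition~\ref{Prop: pic=stab}, where only $\picrel_{g,n}^{0}(\mathbb{R})$ and the injectivity of the integral lattice inside it intervene.)

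The $g=1$ part of your proposal is also internally inconsistent. You assert that the $T_j=\mathcal{O}(\Sigma_j-\Sigma_1)$, $j\ge 2$, are integer combinations of the boundary components; were that so, the set $\{\mathcal{O}(C^{+}_{i,S})\}\cup\{T_j\}$ would be linearly dependent and could not freely generate anything, so your claim would contradict, not establish, the corollary. It is moreover false: $T_j$ restricts with bidegree $(1,-1)$ to a curve with dual graph $\Gamma_j$, whereas every class in the span of the $\mathcal{O}(C^{+}_{i,S})$ restricts trivially to such curves (the $g=1$ analogue of \eqref{two} and \eqref{four}, where the diagonal entry is $2g-2+\delta_{1,g}=1$), so $T_j$ does not lie in that span even with real coefficients. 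A correct treatment of $g=1$ keeps all the sections in play and observes that the degree-zero part of the lattice they span is freely generated by the differences $\Sigma_j-\Sigma_1=T_j$, $j\ge 2$; there the analogous determinant is $\pm 1$, which is precisely why genus one, unlike $g\ge 2$ with $n\ge 2$, presents no integrality obstruction.
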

(In a previous version of this paper we incorrectly stated that this result also holds integrally).

In particular, when either $g=0$ or $n=0$, the vector space $\picrel_{g,n}^0(\mathbb{R})$ coincides with $W_{g,n} \otimes_{\mathbb{Z}} \mathbb{R}$. 

\label{Picard}

\section{The universal stability space}  \label{Section: StabilitySpace}

In this section we construct and study the stability $\mathbb{R}$-vector space $V_{g,n}$, whose affine subspaces $V_{g,n}^d$ of elements of total degree $d \in \mathbb{Z}$ are the stability spaces of $\phi$-compactified universal Jacobians over $\Mb_{g,n}$, which we will construct in Section~\ref{Section: construction}.

In \cite[Section 3.1]{kasspa} we introduced a similar stability space, which we called $V_{g,n}^{\text{TL}}$, the stability space of degree $\phi$-compactified universal Jacobians of degree $g-1$ over moduli of treelike curves. For more details on $V_{g,n}^{\text{TL}}$ and its relation to the space $V_{g,n}^{g-1}$ we introduce here, we direct the reader to Remarks~\ref{compareoldpaper} and \ref{compareoldpaper2}.

Our main result is Theorem~\ref{pic=stab}, which describes the stability space $V_{g,n}$ as the real relative Picard group of the universal curve $\pi \colon \Cb_{g,n} \to \Mb_{g,n}$. An important by-product is Corollary~\ref{isoalphan}, where we show that, for fixed $d$, every degree $d$ stability parameter is uniquely determined by its restriction to all stable curves with $2$ smooth irreducible components that (1) have precisely $1$ separating node, or (2) have $2$ nodes and $1$ component of genus $0$ that carries a unique marked point (the curves whose dual graph is $\Gamma(i,S)$ or $\Gamma_j$ respectively, see Section~\ref{notationmoduli}). In order to make the statement of our main result more transparent, in this section we allow $d$ to be a real number.

\begin{definition} Given $\Gamma \in {G}_{g,n}$ a stable $n$-marked graph of genus $g$, we denote by $V(\Gamma):=\mathbb{R}^{\operatorname{Vert}(\Gamma)}$ the free $\mathbb{R}$-vector space generated by the vertices of $\Gamma$.  For $d \in \mathbb{R}$, the affine subspace $V^d(\Gamma)$ is the set of $\phi \in V(\Gamma)$ such that $\sum_{v \in \operatorname{Vert}(\Gamma)} \phi(v) = d$. \end{definition}

Every automorphism $\alpha$ of $\Gamma$ induces an automorphism of $V(\Gamma)$ defined by $\alpha(\phi)(v)= \phi(\alpha(v))$. An element $\phi \in V(\Gamma)$ is automorphism invariant if $\phi(v) = \phi(\alpha(v))$ for all $v \in \operatorname{Vert}(\Gamma)$. A vector $\phi \in \Pi_{\Gamma \in {G}_{g,n}} V(\Gamma)$ is \emph{automorphism invariant} if for every $\Gamma \in {G}_{g,n}$, the component $\phi(\Gamma)$ of $\phi$ along $\Gamma$ is automorphism invariant in the sense just defined.

Suppose that $c \colon \Gamma_1 \to \Gamma_2$ is a contraction of stable marked  graphs as defined in Section~\ref{contractions}.  We say that $\phi(\Gamma_1) \in V(\Gamma_1)$ is $c$-compatible with $\phi(\Gamma_2) \in V(\Gamma_2)$ if
	\begin{equation} \label{Eqn: CompatibilityRelation}
		\phi(\Gamma_2)(v_2) = \sum \limits_{c(v_1)=v_2} \phi(\Gamma_1)(v_1)
	\end{equation}
	for all vertices $v_2 \in \operatorname{Vert}(\Gamma_2)$.  An element $\phi \in \Pi_{\Gamma \in {G}_{g,n}} V(\Gamma)$ is \emph{compatible with contractions} if its components are $c$-compatible for every contraction $c \colon \Gamma_1 \to \Gamma_2$.

\begin{definition} \label{stabilityspacedef} We define $V_{g,n}$ to be the subspace of $\prod_{\Gamma \in {G}_{g,n}} V(\Gamma)$ of vectors that are automorphism invariant and compatible with contractions. For $d \in \mathbb{R}$, we define $V_{g,n}^d$ to be the affine subspace of  vectors $\phi \in V_{g,n}$ that satisfy $\sum_{v \in \operatorname{Vert}(\Gamma)} \phi({\Gamma})(v) = d$ for all $\Gamma \in {G}_{g,n}$.
\end{definition}

  \begin{remark}   We could have equivalently defined $V_{g,n}^d$ as the subspace of vectors  $\prod_{\Gamma \in {G}_{g,n}} V^d(\Gamma)$ that are automorphism invariant and compatible with contractions.\end{remark}

If $\pi \colon \Cb_{g,n} \to \Mb_{g,n}$ denotes the universal curve, there is a natural multidegree homomorphism $\deg \colon \Pic(\Cb_{g,n}) \to V_{g,n}$ defined by associating to $L$ the vector $\phi=\deg(L)$ whose $\Gamma$-component $\phi(\Gamma)$ is the multidegree of $L$ on any stable pointed curve whose dual graph is isomorphic to $\Gamma$.

There is a natural choice of a basepoint in $V_{g,n}^d$ that mirrors the basepoint we chose in Section~\ref{Picard} for the relative Picard group.

\begin{definition} \label{basepoint} We define the  \emph{canonical parameter} as follows \begin{equation} \label{phican} \phi_{\text{can}}^d:= \begin{cases} \frac{d}{2g-2} \cdot \deg(\omega_{\pi}) & \textrm{ when }g \geq 2, \\ d \cdot \deg(\Sigma_1) & \textrm{ when } g \leq 1.\end{cases} \end{equation}
\end{definition}

In order to state the main result of this section, we first observe that for every $L \in \Pic(\Mb_{g,n})$, the stability parameter $\deg(\pi^*(L))\in V_{g,n}$ is trivial, so the multidegree map descends to a well-defined map $\deg \colon \picrel_{g,n}(\mathbb{Z}) \to V_{g,n}$.
\begin{theorem} \label{pic=stab} The multidegree homomorphism $\deg$ induces an isomorphism
\[
\deg \colon \picrel_{g,n}(\mathbb{R})=\Pic(\C_{g,n}) / \pi^* ( \Pic( \Mb_{g,n})) \otimes_{\mathbb{Z}} \mathbb{R} \to V_{g,n}
\]
from the relative Picard group of real line bundles to the stability space.
\end{theorem}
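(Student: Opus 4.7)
The plan is to exhibit the explicit basis of $\picrel_{g,n}(\mathbb{R})$ given by Fact~\ref{arbarellocornalba} and to verify well-definedness, injectivity, and surjectivity of $\deg$ by direct computations on this basis.

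\emph{Well-definedness.} For $L \in \Pic(\Cb_{g,n})$, the multidegree of $L|_C$ depends only on the isomorphism class of $C$ together with a labeling of its components, and is invariant under automorphisms of the dual graph, so $\deg(L)$ is automorphism invariant. Contraction compatibility encodes additivity of degrees under specialization: in a one-parameter smoothing of a node corresponding to an edge $e$ of $\Gamma_1$, whose nearby fiber has dual graph $\Gamma_2 = (\Gamma_1)_e$, the identity $\deg L|_{C'_{v_2}} = \sum_{c(v_1) = v_2} \deg L|_{C_{v_1}}$ follows from flatness of $L$, additivity of Euler characteristics, and the formula $g(v_2) = g(v_1) + g(v'_1)$ for the strict contraction (with the loop case handled by $g(v_e) = g(v_1) + 1$). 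Pullbacks $\pi^*M$ from $\Mb_{g,n}$ have trivial fiberwise restriction, so $\deg$ descends to $\picrel_{g,n}(\mathbb{R})$.

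\emph{Injectivity.} For $g \geq 2$ (the cases $g \leq 1$ are analogous with the smaller basis), Fact~\ref{arbarellocornalba} writes every $L \in \picrel_{g,n}(\mathbb{R})$ uniquely as $L = \sum a_{i,S}\, \mathcal{O}(C^+_{i,S}) + c\, \omega_\pi + \sum c_j\, \Sigma_j$. A direct computation gives the multidegrees of these generators on the vine graphs: $\mathcal{O}(C^+_{i,S})$ is $(-1, 1)$ on $\Gamma(i,S)$ and vanishes on the other vine graphs (since $C^+_{i,S}$ is supported over the divisor $\Delta(i,S)$); $\omega_\pi$ is $(2i-1,\, 2g-2i-1)$ on $\Gamma(i,S)$ and $(0,\, 2g-2)$ on $\Gamma_j$; and $\Sigma_j$ places a $1$ on the component carrying the $j$-th marking. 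Setting $\deg(L) = 0$ and evaluating on $\Gamma_j$ forces $c_j = 0$ (from the genus $0$ vertex) and then $c = 0$ (from the genus $g-1$ vertex); evaluation on $\Gamma(i,S)$ then forces $a_{i,S} = 0$.

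\emph{Surjectivity.} Given $\phi \in V_{g,n}$, I would extract coefficients $a_{i,S}, c, c_j$ by inverting the linear system above, restricted to the values $\phi(\Gamma(i,S))$ and $\phi(\Gamma_j)$, and let $L \in \picrel_{g,n}(\mathbb{R})$ be the corresponding sum. By construction, $\deg(L)$ agrees with $\phi$ on every vine graph of the form $\Gamma(i,S)$ and $\Gamma_j$. The main obstacle is to show that the difference $\psi := \phi - \deg(L) \in V_{g,n}$, which vanishes on all these elementary vine graphs, vanishes identically. I would prove this by a combinatorial induction on the complexity of $\Gamma$: loops are removed using the loop-contraction identity $\psi(\Gamma)(v) = \psi(\Gamma_e)(v_e)$; for loop-free $\Gamma$ with at least three vertices, one iteratively contracts single edges and combines the resulting compatibility relations to pin down every $\psi(\Gamma)(v)$ from $\psi$ on graphs with fewer edges (as in the three-vertex chain, where the two single-edge contractions already yield $\psi(\Gamma)(v_1) = \psi(\Gamma)(v_2) = \psi(\Gamma)(v_3) = 0$); and for two-vertex vine graphs not of the form $\Gamma(i,S)$ or $\Gamma_j$, one introduces auxiliary three-vertex stable graphs whose multiple contractions produce enough relations to force the required vanishing. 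Verifying that this reduction works uniformly across all stable graphs in $G_{g,n}$ is the technically delicate heart of the proof.
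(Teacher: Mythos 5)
Your well-definedness and injectivity steps are fine and essentially reproduce the paper's computation (Fact~\ref{arbarellocornalba}, Corollary~\ref{piczero} and Lemma~\ref{degreecalculation}): evaluating the free generators on the graphs $\Gamma(i,S)$ and $\Gamma_j$ produces a nonsingular matrix. (Minor point: when $n=0$ there are no graphs $\Gamma_j$, so your elimination of the coefficient of $\omega_\pi$ has to be replaced, e.g.\ by using that its total fiberwise degree is $2g-2$.) The genuine gap is in the surjectivity step, which you yourself flag as ``the technically delicate heart'': you must show that $\psi:=\phi-\deg(L)\in V_{g,n}$, which vanishes on all $\Gamma(i,S)$ and all $\Gamma_j$, vanishes identically, and the mechanism you propose (auxiliary three-vertex graphs and edge contractions) does not suffice. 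Contraction relations from a three-vertex graph only express the value of $\psi$ on a two-vertex contraction as a sum of the three vertex values; feeding in the known vanishing on one-edge contractions recovers nothing beyond what fiberwise degree zero already gives. For instance, take $u_1$ joined to $u_3$ by one edge and to $u_2$ by $\alpha\ge 2$ edges: vanishing on the one-edge contractions forces $\psi=(a,-a,0)$, and contracting the edge $u_1u_3$ only tells you that $\psi$ on the $\alpha$-edge two-vertex graph has the form $(a,-a)$, i.e.\ no new information. So the two-vertex graphs with $\alpha\ge 2$ edges are genuinely not reachable from the graphs $\Gamma(i,S)$ and $\Gamma_j$ by this kind of bookkeeping, and your chain example (where all contractions are one-edge graphs) is misleading about the general difficulty.

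The paper's treatment of exactly this point is much less elementary, and it is worth noting what extra input it uses. After reducing to loopless two-vertex graphs by a spanning-tree argument (Lemma~\ref{Lemma: PhiDeterminedByTwoComponents}), the $n=0$ case is Lemma~\ref{LemmaTLsurjective}, whose proof hinges on automorphism invariance: the trivalent graph $\GSym_g$ carries a transitive cyclic action, which forces $\phi(\GSym_g)=0$, and then contractions of $\GSym_g$ and of auxiliary four-vertex graphs (where invariance forces the shape $(a,a,-a,-a)$) give vanishing on every $\Gamma(\alpha,i,j)$. For $n\ge 1$ the paper does not argue graph by graph at all: it proves injectivity of $V^0_{g,n}\to C_{g,n}\oplus D_{g,n}$ by induction on $n$ via the dimension bound $\dim\Ker_{g,n}\le n-\delta_{1,g}$, and this dimension count leans on the already-established isomorphism $\picrel^0_{g,n}(\mathbb{R})\to C_{g,n}\oplus D_{g,n}$ (to know $\Ker_{g,n}\to D_{g,n}$ is surjective) together with a surjection $\Ker_{g,n}\to K_{g,n+1}$ built by forgetting the last marking. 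Your sketch contains neither the automorphism-invariance input nor any substitute for this dimension count, so the surjectivity half of the theorem — which is where all the content lies — is not yet proved.
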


Before we prove Theorem~\ref{pic=stab}, we now give another  description of the stability spaces that will be the one we will mostly use in this paper.  To this end, we make the following definition.

\begin{definition} \label{CDT} Let
\[
		T_{g,n}:=   \bigoplus_{\substack{ \# \operatorname{Vert}(\Gamma)=2\\ \Gamma \textrm{ has no loops}}} V^0(\Gamma).
	\]
Then define:

\begin{enumerate}

\item The vector space $C_{g,n}$ is the quotient space of $T_{g,n}$ obtained as the direct sum of all $V^0(\Gamma(i,S))$ (see Section~\ref{notationmoduli} for the definition of $\Gamma(i,S)$).

\item The vector space $D_{g,n}$ is the quotient space of $T_{g,n}$ obtained as the direct sum of all $V^0(\Gamma_j)$ for $j=1+ \delta_{1,g},\ldots, n$ (see Section~\ref{notationmoduli} for the definition of $\Gamma_j$. In particular, $D_{0,n} = \{0\}$).
\end{enumerate}
There are natural projections $p_C \colon T_{g,n} \to C_{g,n}$ and $p_D \colon T_{g,n} \to D_{g,n}$.
\end{definition}
There is a natural restriction map $\rho \colon V_{g,n}^0 \to T_{g,n}$. More generally, for every  $d\in \mathbb{R}$, there is a natural map $\rho^d \colon V_{g,n}^d \to T_{g,n}$ obtained by composing $\rho$ with the translation $\phi \mapsto \phi - \phi_{\text{can}}^d$.  Choosing the canonical parameter $\phi_{\text{can}}^d$ for the origin in $V_{g,n}^d$ makes $\rho^d$ into a homomorphism of vector spaces. We have then the following alternative description of each degree $d$ stability space.
 \begin{corollary} \label{isoalphan}
The composite homomorphism  \[(p_C \oplus p_D) \circ \rho^d \colon V_{g,n}^d \to C_{g,n} \oplus D_{g,n}\] is an isomorphism. \end{corollary}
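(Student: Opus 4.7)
The plan is to combine Theorem~\ref{pic=stab} and Corollary~\ref{piczero} to replace the source $V_{g,n}^d$ with $\picrel_{g,n}^0(\mathbb{R})$, which carries an explicit $\mathbb{R}$-basis consisting of the line bundles $\mathcal{O}(C^+_{i,S})$ together with, when $g \geq 1$, the twisted sections $T_j$ for $j = 1+\delta_{1,g}, \ldots, n$. Since each summand $V^0(\Gamma(i,S))$ and $V^0(\Gamma_j)$ of $C_{g,n} \oplus D_{g,n}$ is one-dimensional, the target has a basis of the same cardinality as the source, and it suffices to compute the matrix of $(p_C \oplus p_D) \circ \rho^d$ in these bases and show it is invertible.

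I will argue this matrix is block upper triangular, indexing columns by the two types of source basis elements ($\mathcal{O}(C^+_{i',S'})$ and $T_{j'}$) and rows by the corresponding target summand type ($V^0(\Gamma(i,S))$ and $V^0(\Gamma_j)$). The $(2,1)$-block vanishes because contracting one of the two parallel edges of $\Gamma_j$ produces a graph with a loop, not an edge between two distinct vertices, so $\Gamma(i',S')$ is never a contraction of $\Gamma_j$; hence the generic curve with dual graph $\Gamma_j$ lies in no divisor $\Delta(i',S')$ and $\mathcal{O}(C^+_{i',S'})$ restricts trivially to it. An analogous analysis of $\Gamma(i,S)$ shows that $\Delta(i',S')$ contains the generic point of $\mathcal{M}_{\Gamma(i,S)}$ only when $(i',S') = (i,S)$, forcing the $(1,1)$-block to be diagonal as well.

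It then remains to check that the diagonal entries of blocks $(1,1)$ and $(2,2)$ are nonzero. For $(1,1)$, the tautological relation $\mathcal{O}(C^+_{i,S}) \otimes \mathcal{O}(C^-_{i,S}) = \pi^* \mathcal{O}(\Delta(i,S))$ combined with the fact that $C^+_{i,S}$ and $C^-_{i,S}$ meet transversely along the node section yields multidegree $(-1, 1)$ on $(C^+, C^-)$, a generator of $V^0(\Gamma(i,S))$. For $(2,2)$, a direct computation of $\deg \omega_\pi$ and $\deg \Sigma_{j'}$ on each component of a curve with dual graph $\Gamma_j$ shows that $T_{j'}$ restricts to a nonzero element of $V^0(\Gamma_j)$ exactly when $j' = j$, the result being $(\pm(2g-2), \mp(2g-2))$ for $g \geq 2$ and $(\pm 1, \mp 1)$ for $g = 1$.

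The main thing to watch is the low-genus conventions: when $g = 0$ there are no twisted sections and $D_{0,n} = 0$, so only block $(1,1)$ is relevant; when $g = 1$ the formula for $T_j$ is different but the $(2,2)$ calculation proceeds analogously. The principal technical subtlety I expect is the restriction $\mathcal{O}(C^+_{i,S})|_C$ when $C^+_{i,S}$ contains a component of $C$: this must be handled at the level of line bundles via the tautological relation, rather than by a naive Cartier-divisor restriction.
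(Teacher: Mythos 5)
Your proposal is correct, and all of your bidegree computations agree with the paper's Lemma~\ref{degreecalculation}, but the packaging differs from the paper's. The paper deduces Corollary~\ref{isoalphan} in two lines from Proposition~\ref{Prop: pic=stab}, which already asserts that $(p_C \oplus p_D)\circ\rho \colon V^0_{g,n} \to C_{g,n}\oplus D_{g,n}$ is an isomorphism, and then translates by $\phi^d_{\text{can}}$; Theorem~\ref{pic=stab} is itself deduced from that same proposition. You instead take Theorem~\ref{pic=stab} as the input, transport $V^d_{g,n}$ to $\picrel^0_{g,n}(\mathbb{R})$ via $\deg$ and the matching basepoints \eqref{basepoint1} and \eqref{phican}, and verify invertibility of the resulting matrix on the free generators of Corollary~\ref{piczero}; your block-triangularity argument and the diagonal entries $(-1,+1)$ and $(\pm(2g-2+\delta_{1,g}),\mp(2g-2+\delta_{1,g}))$ reproduce exactly formulas \eqref{one}--\eqref{four} and the nonsingular-matrix step in the proof of Proposition~\ref{Prop: pic=stab}. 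This is not circular, since the paper establishes Theorem~\ref{pic=stab} before and independently of the corollary, so the deduction is legitimate. What your route buys is transparency: the explicit bases and the geometric reasons for the vanishing entries (that $\Gamma(i',S')$ is never a contraction of $\Gamma_j$, and that $\mathcal{O}(C^+_{i,S})$ must be restricted via the relation $\mathcal{O}(C^+_{i,S})\otimes\mathcal{O}(C^-_{i,S})=\pi^*\mathcal{O}(\Delta(i,S))$) are spelled out. What it hides is that the genuinely hard content --- surjectivity of $\deg\colon\picrel^0_{g,n}(\mathbb{R})\to V^0_{g,n}$, i.e.\ that every automorphism-invariant, contraction-compatible multidegree vector comes from a real line bundle, proved in the paper by the induction on $n$ together with Lemmas~\ref{Lemma: PhiDeterminedByTwoComponents} and \ref{LemmaTLsurjective} --- is imported wholesale from the cited theorem, so your proof is best viewed as a repackaging of the same ingredients rather than an independent argument.
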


We now aim to prove Theorem~\ref{pic=stab} and Corollary~\ref{isoalphan}. Here is the idea of our proof when $g \geq 2$. It is not hard to reduce both results to proving that, in degree zero, both maps
\[
\picrel_{g,n}^0(\mathbb{R}) \to V_{g,n}^0 \to C_{g,n} \oplus D_{g,n}
\]
are isomorphisms. Injectivity of $\picrel_{g,n}^0(\mathbb{R}) \to C_{g,n} \oplus D_{g,n}$ follows by computing the bidegree of the free generators of the Picard group on curves whose dual graph is $\Gamma(i,S)$ and $\Gamma_j$ (Lemma~\ref{degreecalculation}), and observing that the resulting matrix is nonsingular. From this we immediately deduce that  $\picrel_{g,n}^0(\mathbb{R}) \to C_{g,n} \oplus D_{g,n}$ is an isomorphism, because the source and the target have the same dimension. Our results follow if we can prove that $\picrel_{g,n}^0(\mathbb{R}) \to V_{g,n}^0$ is surjective, or equivalently that $V_{g,n}^0 \to C_{g,n} \oplus D_{g,n}$ is injective. This is the content of Proposition~\ref{Prop: pic=stab}, which we prove inductively in $n$. An important intermediate step is Lemma~\ref{Lemma: PhiDeterminedByTwoComponents}, where we show that $V_{g,n}^0 \to T_{g,n}$ is also injective. The base case of the induction $n=0$ is settled by combining Lemma~\ref{Lemma: PhiDeterminedByTwoComponents} and Lemma~\ref{LemmaTLsurjective}.

We now compute the bidegree of the free generators we chose in Corollary~\ref{piczero} for the relative Picard group $\picrel_{g,n}^0(\mathbb{R})$ on the particular vine curves that appear in parts (1) and (2) of Definition~\ref{CDT}. When ordering the $2$ components of the curve, we follow the same convention that we chose in Definition~\ref{generators} to order the $2$ components $C_{i,S}^+$ and $C_{i,S}^-$ of the inverse image of $\Delta_{i,S}$ in the universal curve.

\begin{lemma} \label{degreecalculation} For $g \geq 2$, the bidegrees of the components $\mathcal{O}({C}_{i',S'}^{+})$ and of the twisted sections $T_k$ on curves whose dual graph is $\Gamma(i,S)$ and $\Gamma_j$ is given by the following formulas:

\begin{equation}\label{one}
\deg\left(\mathcal{O}(C_{i,S}^{+}) |\Gamma(i',S')\right) = \begin{cases}
										(- 1, + 1) 	& \text{ if } (i',S')=(i, S),\\
										(0, 0)		& \text{ if } (i',S') \ne (i,S).
									\end{cases} 	\end{equation}

\begin{equation} \label{two} \deg\left(\mathcal{O}(C_{i,S}^{+}) |\Gamma_j \right)  = (0,0) 	\end{equation}

\begin{equation} \label{three}
\deg \left(T_k |\Gamma(i,S)\right) = \begin{cases}
										(2g-2i-1, 2i+1-2g) 	& \text{ if } k \in S,\\
										(1-2i, 2i-1)		& \text{ if } k \notin S;
									\end{cases} 	\end{equation}

 \begin{equation} \label{four} {\deg}\left( T_k |\Gamma_j\right) = \begin{cases}
										(2g-2  ,  2-2g) 	& \text{ if } j=k,\\
										(0, 0)		& \text{ if } j \ne k.
									\end{cases} 	\end{equation}
\end{lemma}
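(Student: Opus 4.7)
My plan is to prove all four equations by direct calculation, using just two inputs: the triviality of $\pi^*\Delta(i,S)$ restricted to any fiber of $\pi$, and the adjunction description of $\omega_\pi$ on a nodal fiber.

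For \eqref{one} and \eqref{two}, I would first observe that $\pi^*\Delta(i,S) = C^+_{i,S} + C^-_{i,S}$ as divisors on $\Cb_{g,n}$, so on every fiber $C$ of $\pi$ the restrictions $\mathcal{O}(C^+_{i,S})|_C$ and $\mathcal{O}(C^-_{i,S})|_C$ are mutually inverse. When $(i',S') \neq (i,S)$ in \eqref{one}, a general curve with dual graph $\Gamma(i',S')$ has a single separating node of type $(i',S')$, hence $[C] \in \Delta(i',S')$ but $[C] \notin \Delta(i,S)$; thus $C^+_{i,S}$ is disjoint from the fiber and the bidegree is $(0,0)$. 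For \eqref{two}, I would argue that a curve with dual graph $\Gamma_j$ has only non-separating nodes, and smoothing either one triggers stabilization of the rational bridge, so the nearby generic curve lies in $\Delta_{\text{irr}}$; in particular $[C]$ lies in no $\Delta(i,S)$ of our convention, and the same disjointness argument applies. The remaining case $(i',S')=(i,S)$ in \eqref{one} is settled by computing $\deg(\mathcal{O}(C^+_{i,S})|_{C^-}) = \#(C^+_{i,S}\cap C^-) = 1$ directly (the single node), so the triviality of the sum forces the degree on $C^+$ to be $-1$, giving bidegree $(-1,+1)$.

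For \eqref{three} and \eqref{four} I would use the decomposition $T_k = \mathcal{O}((2g-2)\Sigma_k) \otimes \omega_\pi^{\otimes -1}$. The section $\Sigma_k$ meets only the component containing $p_k$, so $\mathcal{O}((2g-2)\Sigma_k)|_C$ has bidegree $(2g-2,0)$ or $(0,2g-2)$ according to whether $k$ marks $C^+$ or $C^-$. By adjunction, $\omega_\pi|_C = \omega_C$ restricts to each component $C_\alpha$ with degree $2g_\alpha - 2 + \#\{\text{nodes of $C$ on $C_\alpha$}\}$. For $\Gamma(i,S)$-curves this yields bidegree $(2i-1,\,2g-2i-1)$ for $\omega_\pi|_C$, and combining with the section computation in each sub-case (``$k \in S$'' versus ``$k \notin S$'') gives \eqref{three}. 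For $\Gamma_j$-curves the rational bridge carries two nodes, so $\omega_\pi|_C$ has bidegree $(2\cdot 0 - 2 + 2,\,2(g-1) - 2 + 2) = (0,\,2g-2)$, and combining yields \eqref{four}.

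There is no deep obstacle here; everything reduces to bookkeeping with adjunction and the triviality of $\pi^*\Delta(i,S)$ on fibers. The one step that requires a moment's care is the vanishing claim in \eqref{one} for $(i',S') \neq (i,S)$ and in all of \eqref{two}: I need to check that the relevant fibers lie outside every $\Delta(i,S)$ allowed by our conventions, which crucially relies on the fact that $\Delta_{\text{irr}}$ is absent from the list of generators in Fact~\ref{arbarellocornalba}.
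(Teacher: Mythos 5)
Your computation is correct and is exactly the direct verification the paper has in mind --- its own proof of Lemma~\ref{degreecalculation} consists of the single word ``Straightforward'', and your combination of the fiberwise triviality of $\pi^*\mathcal{O}(\Delta(i,S))$, the intersection count at the node, and adjunction for $\omega_\pi$ is the standard way to carry it out. The only wording to tighten is your justification of \eqref{two}: the relevant point is simply that a curve with dual graph $\Gamma_j$ has no separating node, hence lies in no $\Delta(i,S)$, so its fiber is disjoint from every $C^+_{i,S}$; the appeal to nearby smoothings and ``stabilization of the rational bridge'' is unnecessary (and slightly misleading, since no component gets contracted when a node of such a curve is smoothed).
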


\begin{proof} Straightforward.
\end{proof}
Combining Lemma~\ref{degreecalculation} with Corollary~\ref{piczero} we deduce that rational and numerical equivalence are equivalent in $\picrel_{g,n}^0(\mathbb{R})$, and from this we deduce that the composite map $(p_C \oplus p_D)\circ \rho \circ \deg\colon \picrel_{g,n}^0(\mathbb{R}) \to C_{g,n} \oplus D_{g,n}$ is injective.

An important fact  that we will use throughout  is that every element $\phi \in V_{g,n}^0$ is completely determined by its value over all vine curves (i.e.~curves with $2$ smooth irreducible components, see Section~\ref{notationmoduli} and Definition~\ref{CDT}).

\begin{lemma} \label{Lemma: PhiDeterminedByTwoComponents}
	The restriction
	$
		\rho \colon V_{g,n}^0 \to   T_{g,n}
	$
	 is injective.
\end{lemma}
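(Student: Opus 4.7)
The plan is to prove $\phi(\Gamma)=0$ for every stable $n$-marked graph of genus $g$ by induction on the pair $(|V(\Gamma)|,|E(\Gamma)|)$ ordered lexicographically, under the hypothesis that $\phi$ restricts to zero on every vine graph. The base case $|V(\Gamma)|=1$ is immediate because $V^0(\Gamma)=0$.

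For the inductive step I distinguish three cases. First, if $\Gamma$ has a loop $e$, contracting it yields a stable graph $\Gamma_e$ with the same vertex set and one fewer edge; the compatibility relation \eqref{Eqn: CompatibilityRelation} in this case reads $\phi(\Gamma)(v)=\phi(\Gamma_e)(v)$ at every vertex, so $\phi(\Gamma)=\phi(\Gamma_e)=0$ by the induction hypothesis. I may therefore assume $\Gamma$ is loopless. If $|V(\Gamma)|=2$, then $\Gamma$ is a vine graph and $\phi(\Gamma)=0$ by hypothesis. If $|V(\Gamma)|\geq 3$, I pick any edge $e$ with endpoints $v_1, v_2$ and contract it to obtain $\Gamma_e$, which has $|V(\Gamma)|-1$ vertices and so satisfies $\phi(\Gamma_e)=0$ by induction; the compatibility relation then forces $\phi(\Gamma)(v)=0$ for every $v\notin\{v_1,v_2\}$ and $\phi(\Gamma)(v_1)+\phi(\Gamma)(v_2)=0$. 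To split the last two values I choose a third vertex $v_3$ (possible since $|V(\Gamma)|\geq 3$) and a non-loop edge $e'$ incident to $v_3$ (available because $\Gamma$ is connected, so $v_3$ is joined to the rest of $\Gamma$ by an edge which is necessarily non-loop). Contracting $e'$ produces a graph with fewer vertices on which $\phi$ vanishes by induction, so $\phi(\Gamma)(v)=0$ at every vertex not identified by $e'$; since at most one of $v_1,v_2$ can be an endpoint of $e'$, at least one of them survives, is unmerged, and has $\phi$-value $0$, which combined with the earlier sum relation gives $\phi(\Gamma)=0$.

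The main technical point to verify is that all contractions employed stay inside the class of stable $n$-marked graphs of genus $g$: contracting a loop at a vertex $v$ raises $g(v)$ by one, thereby making the stability inequality at $v$ vacuous after contraction, while contracting a non-loop edge merges half-edge and marking counts in a way that preserves stability at the merged vertex. The remainder of the argument is essentially combinatorial bookkeeping ensuring that in the many-vertex case the auxiliary edge $e'$ always provides the separating equation needed to pin down $\phi(\Gamma)(v_1)$ and $\phi(\Gamma)(v_2)$ individually.
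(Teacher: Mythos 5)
Your proof is correct. The only things that need checking are exactly the ones you flag: that a single-edge contraction of a stable $n$-marked graph of genus $g$ is again stable of genus $g$ (so that the relation \eqref{Eqn: CompatibilityRelation} is actually available for the contractions you use), and your verifications there are right — a loop contraction raises the genus of the affected vertex, and a non-loop contraction at two genus-$0$ endpoints leaves at least $(3-1)+(3-1)=4$ half-edges and markings at the merged vertex. Your route differs from the paper's in organization rather than in substance: the paper first removes loops by contraction-compatibility and then, for a loopless $\Gamma'$, fixes a spanning tree and invokes the injectivity part of \cite[Lemma~3.9]{kasspa}, i.e.\ for each spanning-tree edge it contracts everything else to a $2$-vertex loopless graph and reads off the vanishing of the sum of $\phi$ over one side of the corresponding vertex cut, these $\#\operatorname{Vert}-1$ cut equations together with the degree-$0$ condition forming a nonsingular (tree) system. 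You instead run a self-contained lexicographic induction on $(\#\operatorname{Vert},\#\operatorname{Edge})$, contracting one edge at a time and using a second, auxiliary contraction at a third vertex to separate the two values $\phi(\Gamma)(v_1)$, $\phi(\Gamma)(v_2)$ that the first contraction only pins down in sum. What your version buys is independence from the earlier paper and from any linear-algebra-on-trees argument; what the paper's version buys is brevity (given \cite{kasspa}) and an explicit identification of which cut equations come from which $2$-vertex graphs, which is the structure reused later (e.g.\ in Corollary~\ref{isoalphan} and Remark~\ref{optimalonct}). Either argument proves injectivity of $\rho\colon V_{g,n}^0\to T_{g,n}$, since $\rho$ is linear and you show its kernel is trivial.
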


\begin{proof}
 Let $\Gamma'\in {G}_{g,n}$ be a stable graph. By applying compatibility with contractions we can assume without loss of generality that $\Gamma'$ has no loops. Then consider a spanning tree $\Gamma$ of $\Gamma'$, and run the injectivity part of the proof in \cite[Lemma 3.9]{kasspa}, with the only difference being that the right-hand side of \cite[Equation (15)]{kasspa} should equal zero.
\end{proof}

\begin{lemma} \label{LemmaTLsurjective} Let $\phi \in V_{g,0}^0$ and $\Gamma \in {G}_{g,0}$ be a graph with $2$ vertices joined by $\geq 2$ edges. Then $\phi(\Gamma)$ is trivial.
\end{lemma}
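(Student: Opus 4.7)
The plan is to handle $\Gamma$ by cases on $(g_1,g_2)$ and run an iteration among $2$-vertex $2$-edge stable graphs of genus $g$ that forces $\phi(\Gamma)$ to vanish.

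Order the vertices so that $g_1\le g_2$; the degree-zero condition gives $\phi(v_1)+\phi(v_2)=0$. In the case $g_1=g_2$, the transposition of the two vertices is a graph automorphism of $\Gamma$, so the automorphism invariance of $\phi$ yields $\phi(v_1)=\phi(v_2)=0$. Assume from now on that $g_1<g_2$; stability of $\Gamma$ forces $g_1\ge 1$ whenever $k=2$.

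The key construction is the $3$-vertex stable graph $\widetilde\Gamma$ with vertex genera $(g_1,\,g_2-g_1,\,g_1)$ and edge multiplicities $(m_{12},m_{13},m_{23})=(1,k-1,1)$. One checks that its arithmetic genus is $g$ and that it is stable (when $g_1=0$, the hypothesis $k\ge 3$ makes $u_1$ and $u_3$ stable via their valences). Since $u_1\leftrightarrow u_3$ is an automorphism, one has $\phi(\widetilde\Gamma)(u_1)=\phi(\widetilde\Gamma)(u_3)=:x$. Contracting the edge $u_1u_2$ (equivalently $u_2u_3$) recovers $\Gamma$, and \eqref{Eqn: CompatibilityRelation} gives $\phi(\Gamma)(v_{g_1})=x$. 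Contracting any of the $k-1$ edges between $u_1$ and $u_3$, followed by the $k-2$ resulting loops at the merged vertex (loop contractions leave $\phi$ at that vertex unchanged), produces a no-loop $2$-vertex $2$-edge stable graph $\Gamma^*$ with vertex genera $(2g_1+k-2,\,g_2-g_1)$ and satisfying $\phi(\Gamma^*)(v_{2g_1+k-2})=2x$.

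The genera of $\Gamma^*$ sum to $g-1$, so $\Gamma^*$ is itself a $2$-vertex $2$-edge stable graph of genus $g$ of the form to which the same construction applies (now with $k=2$). Iterating produces a sequence of such graphs, parameterized by unordered pairs in the finite set $S=\{\{p_1,p_2\}:p_1+p_2=g-1,\,1\le p_1\le p_2\}$, along which $|\phi(\cdot)(v_{\min})|$ is doubled at each step. If the iteration ever reaches a pair with $p_1=p_2$, then $\phi$ vanishes there by the automorphism case, and the doubling relation propagates the zero backwards to give $x=0$. Otherwise the orbit stays in the set of pairs with distinct components, and the unbounded doubling contradicts the finiteness of $S$. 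In either case $\phi(\Gamma)(v_{g_1})=x=0$, and hence $\phi(\Gamma)=0$. The main obstacle is precisely the base case $k=2$, where the auxiliary $\widetilde\Gamma$ does not a priori decrease any obvious complexity of the pair $(g_1,g_2)$; the finiteness of $S$ together with the doubling relation is the essential input that closes the argument.
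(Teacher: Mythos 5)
Your argument is correct, and it takes a genuinely different route from the paper's. The paper first forces vanishing on the vertex-transitive trivalent graph $\GSym_g$ (the cyclic symmetry acting transitively on its $2g-2$ genus-zero vertices kills $\phi(\GSym_g)$ outright), transfers this by contractions to all graphs $\Gamma(\alpha,i,0)$ with a genus-zero vertex, and then handles general $\Gamma(\alpha,i,j)$ via a four-vertex auxiliary graph with genera $(j,j,0,0)$ whose swap symmetry and two different contractions tie its value $a$ both to a graph of the form $\Gamma(\cdot,\cdot,0)$ (already known to vanish) and to $\Gamma(\alpha,i,j)$. You instead use only the equal-genus automorphism case as an anchor and a three-vertex auxiliary graph with genera $(g_1,g_2-g_1,g_1)$: its $u_1\leftrightarrow u_3$ symmetry plus the two contractions produce the relation $\phi(\Gamma^*)(v_{2g_1+k-2})=2\,\phi(\Gamma)(v_{g_1})$, i.e.\ a self-map on the finite set of two-vertex, two-edge, loopless stable graphs of genus $g$ along which the relevant $\phi$-value doubles in absolute value. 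The closing step deserves the one-sentence gloss you only gesture at: since the pair $\{p_1,p_2\}$ (with $p_1\neq p_2$) determines the graph up to isomorphism and $\phi$ assigns a fixed, automorphism-invariant value to each of the finitely many classes, the quantities $|z_m|=2^m|z_0|$ range over a finite set of real numbers, which is impossible unless $z_0=0$; equivalently, once the orbit of pairs repeats, the doubling relation forces the recurring value to vanish, and $\pm2$-propagation backwards gives $x=0$. (Your toy check is the fixed pair $\{1,2\}$ in genus $4$, where one directly gets $2x=-x$.) Both proofs use only automorphism invariance and contraction compatibility; the paper's is more constructive, reaching each graph by an explicit bounded chain through $\GSym_g$, while yours trades the special graph $\GSym_g$ for a clean finiteness-plus-doubling argument. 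Like the paper, you implicitly treat the loopless case, which is what is actually needed (graphs with loops reduce to it by contracting the loops, which leaves the $\phi$-values unchanged).
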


\begin{proof}
We begin by fixing the notation for loopless graphs with $2$ vertices and at least $2$ edges, when $n=0$. Let $\alpha, i, j \in \mathbb{N}$ such that $\alpha+i+ j -1= g$, $\alpha \geq 2$ and subject to the stability condition $\min(i, j) = 0 \implies \alpha \geq 3$. Define the stable graph $\Gamma(\alpha, i, j) \in {G}_{g,0}$ to consist of $2$ vertices $v_1, v_2$ of genera $i$ and $j$ respectively, with $\alpha$ edges connecting $v_1$ to $v_2$. We aim to prove that $\phi(\Gamma(\alpha,i,j))=(0,0)$.

We first prove our claim in the special case when $j=0$ (so $3 \leq \alpha = g+1- i$). Consider the trivalent graph $\GSym_g$ that has $2g-2$ vertices $v_1, \ldots, v_{2g-2}$ of genus $0$, where each vertex $v_i$ is joined to $v_{i-1}$, $v_{i+1}$  and $v_{i+g-1}$ (here indices should be considered modulo $2g-2$). The cyclic group of order $2g-2$ acts on $\GSym_g$ and its induced action on the set of vertices $\operatorname{Vert}(\GSym_g)$ is transitive. By automorphism invariance, the component $\phi(\GSym_g)$ of $\phi \in V_{g,0}^0$ along $\GSym_g$ is trivial.

  Choose $2g-\alpha$ consecutive vertices of $\GSym_g$ and contract them to $1$ vertex, possibly contracting all loops based at that vertex in the process. Then  also contract the complement set of $\alpha-2$ vertices of $\GSym_g$ to a second vertex. The resulting graph is isomorphic to $\Gamma(\alpha, i, 0)$ and by compatibility with contractions we have that the $\Gamma(\alpha, i, 0)$-component of $\phi \in V_{g,0}^0$ is trivial, thus proving the claim in the case $j=0$.

We now deduce that the $\Gamma(\alpha, i, j)$-component of $\phi$ is zero for all $\alpha, i, j$ by using contractions to relate $\Gamma(\alpha, i, j)$ to a graph of the form $\Gamma(t_1, t_2, 0)$, specifically the graph $\Gamma(i-j+2, \alpha+2j-2,0)$.

Assume without loss of generality that $i>j$. Consider the graph with 4 vertices  $w_1, w_2, w_3, w_4$ where $w_1$ and $w_2$ have genus $j$, and $w_3$ and $w_4$ have genus $0$. The vertices $w_1$ and $w_2$ are joined by $\alpha-1$ edges; $w_1$ is joined to $w_3$ by $1$ edge, and so is $w_2$ to $w_4$. Finally, $w_3$ and $w_4$ are joined by $i-j+1$ edges. By invariance under automorphism, the component of $\phi \in V_{g,0}^0$ along this graph equals $(a, a, -a, -a)$ for some $a \in \mathbb{R}$.

Contracting the vertices $w_1, w_2$ and $w_4$ to a single vertex and then contracting all remaining loops on it, produces a graph isomorphic to $\Gamma(i - j +2,  \alpha+2j-2, 0)$. Because we have already computed that the component of $\phi$ along this graph is trivial, we deduce that $a=0$.

Contracting the vertices $w_1, w_3$ and $w_4$ to a single vertex and then contracting all remaining loops on it, produces a graph isomorphic to $\Gamma(\alpha, i, j)$, so by compatibility with contractions the component $\phi(\Gamma(\alpha, i, j))$ is also trivial, and the statement is proven.
\end{proof}

The following is the key part of the proof of the main result of this section.

\begin{proposition} \label{Prop: pic=stab}
 Both the multidegree homomorphism \begin{equation} \deg \colon \picrel_{g,n}^0(\mathbb{R}) \to V_{g,n}^0\end{equation} and the composition \begin{equation}  (p_C \oplus p_D) \circ \rho \colon V_{g,n}^0 \to C_{g,n} \oplus D_{g,n} \end{equation} are isomorphisms.
 \end{proposition}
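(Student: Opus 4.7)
My plan is to reduce the proposition to a diagram chase and concentrate the real work in one step. Consider the composite
$$\Psi \colon \picrel_{g,n}^0(\mathbb{R}) \xrightarrow{\deg} V_{g,n}^0 \xrightarrow{(p_C\oplus p_D)\circ \rho} C_{g,n} \oplus D_{g,n}.$$
If I verify (i) $\Psi$ is injective, (ii) the source and target of $\Psi$ have the same $\mathbb{R}$-dimension, and (iii) the second map $(p_C\oplus p_D)\circ \rho$ is injective, then (i)+(ii) promote $\Psi$ to an isomorphism, so in particular the second map is surjective; combined with (iii) this makes it an isomorphism, and sandwiching then forces $\deg$ to be an isomorphism as well.

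Steps (i) and (ii) are short. For (i), write the matrix of $\Psi$ in the $\mathbb{Z}$-basis of $\picrel_{g,n}^0(\mathbb{Z})$ supplied by Corollary \ref{piczero}, and read off the entries from Lemma \ref{degreecalculation}: the classes $\mathcal{O}(C_{i,S}^+)$ pair nontrivially only with the $\Gamma(i,S)$-factor of $C_{g,n}$, while the twisted sections $T_k$ pair nontrivially only with the $\Gamma_k$-factor of $D_{g,n}$, and in both cases the relevant diagonal block is invertible, so the integral map is injective; extending scalars to $\mathbb{R}$ preserves this since the source is $\mathbb{Z}$-free. For (ii), Corollary \ref{piczero} gives $\dim \picrel_{g,n}^0(\mathbb{R}) = N(g,n) + (n-\delta_{1,g})$ when $g \geq 1$ and $N(g,n)$ when $g=0$, which matches $\dim C_{g,n} + \dim D_{g,n}$ directly from Definition \ref{CDT}.

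The heart of the argument is step (iii). By Lemma \ref{Lemma: PhiDeterminedByTwoComponents} it suffices to show that any $\phi \in V_{g,n}^0$ with $\phi(\Gamma(i,S)) = 0$ for all admissible $(i,S)$ and $\phi(\Gamma_j) = 0$ for all $j = 1+\delta_{1,g}, \dots, n$ must satisfy $\phi(\Gamma) = 0$ for every loopless two-vertex stable graph $\Gamma \in G_{g,n}$. I will prove this by induction on $n$. The base case $n=0$ is immediate: Lemma \ref{LemmaTLsurjective} kills the graphs with $\alpha \geq 2$ edges, and the hypothesis on $\Gamma(i,\emptyset)$ handles the single-edge case.

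The inductive step is where the main obstacle lies. Given $\Gamma$ with bipartition $(S_1, S_2)$ of $[n]$, genera $(g_1, g_2)$, and $\alpha \geq 2$ edges, and assuming $\Gamma$ is not isomorphic to any $\Gamma_j$, my plan is to adapt the symmetric-graph trick in the proof of Lemma \ref{LemmaTLsurjective} to the marked setting. Concretely, I would construct an auxiliary 3- or 4-vertex stable graph $\widetilde{\Gamma} \in G_{g,n}$ equipped with an involution that swaps two isomorphic vertices together with the markings they carry, so that automorphism invariance pins $\phi(\widetilde{\Gamma})$ down to a single scalar; then exhibit one-edge contractions $\widetilde{\Gamma} \to \Gamma$ as well as contractions $\widetilde{\Gamma} \to \Gamma'$ where $\Gamma'$ is either one of the graphs already known to vanish ($\Gamma(i,S)$ or $\Gamma_j$) or a two-vertex loopless graph amenable to the inductive hypothesis (after a forget-and-stabilize on the symmetric marking pair that effectively lowers $n$). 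The compatibility relations \eqref{Eqn: CompatibilityRelation} then yield a small linear system forcing $\phi(\Gamma)=0$. The delicate combinatorial bookkeeping --- choosing $\widetilde{\Gamma}$ in each extremal subcase, where one of $|S_1|, |S_2|, g_1, g_2$ is small enough that stability of $\widetilde{\Gamma}$ or the required symmetry is in danger --- is the real difficulty of the argument.
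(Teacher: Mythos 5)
Your reduction and your steps (i)--(ii) are fine and coincide with the first half of the paper's proof: the paper also uses Lemma~\ref{degreecalculation} and Corollary~\ref{piczero} to see that the composite $(p_C\oplus p_D)\circ\rho\circ\deg$ is an isomorphism (its \eqref{stepzero}), and then observes that both claims of Proposition~\ref{Prop: pic=stab} follow once $(p_C\oplus p_D)\circ\rho$ is shown to be injective. The genuine gap is in your step (iii) for $g\geq 1$, $n\geq 1$, which you only sketch. Two parts of the sketch do not work as stated. First, the ``symmetric-graph trick'' of Lemma~\ref{LemmaTLsurjective} does not transfer to the marked setting in the way you propose: an automorphism of an $n$-marked graph must satisfy $\alpha_V\circ p=p$, i.e.\ it fixes the vertex carrying each marking, so an involution that ``swaps two isomorphic vertices together with the markings they carry'' is simply not an automorphism unless the swapped vertices are unmarked. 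This is not an extremal subcase to be bookkept away; it is the generic situation once markings are distributed on both sides, and it is precisely why the $n=0$ argument does not extend verbatim. Second, your mechanism for invoking the inductive hypothesis --- ``a forget-and-stabilize on the symmetric marking pair that effectively lowers $n$'' --- is undefined: elements of $V_{g,n}^0$ are only constrained by automorphism invariance and compatibility with edge contractions, both of which preserve $n$, so there is no map from (a subspace of) $V_{g,n}^0$ to $V_{g,n-1}^0$ available to you. Building a bridge between $n$ and $n+1$ markings is exactly the hard content of the proposition, and you have deferred it rather than supplied it. (You also leave out the base cases $g=0$ and $(g,n)=(1,1)$, which are easy but need a word: for $g=0$ one has $T_{0,n}=C_{0,n}$, and $V_{1,1}^0=\{0\}$.)

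For comparison, the paper avoids proving the vanishing graph-by-graph. It quotients out the image of $W_{g,n}\otimes\mathbb{R}$, sets $\Ker_{g,n}:=\ker\bigl(V_{g,n}^0\to C_{g,n}\bigr)$, notes that $p_D\circ\rho\colon\Ker_{g,n}\to D_{g,n}$ is surjective because of \eqref{stepzero}, and reduces injectivity to the dimension bound $\dim\Ker_{g,n}\leq n-\delta_{1,g}$. This bound is proved by induction on $n$ via two inequalities: $\dim\Ker_{g,n+1}\leq\dim K_{g,n+1}+1$, where $K_{g,n+1}\subseteq\Ker_{g,n+1}$ is the subspace of vectors vanishing on vertices that become unstable after forgetting the $(n+1)$-st marking (this uses Lemma~\ref{Lemma: PhiDeterminedByTwoComponents} and the single graph $\Gamma_{n+1}$), and $\dim K_{g,n+1}\leq\dim\Ker_{g,n}$, obtained from an explicit surjection $\lambda\colon\Ker_{g,n}\to K_{g,n+1}$ that extends a stability vector by $0$ on the extra vertex (checking that $\lambda(\phi)$ is again automorphism invariant and contraction compatible). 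The base cases are Lemma~\ref{LemmaTLsurjective} for $n=0$, $g\geq2$, and the direct check $V_{1,1}^0=\{0\}$. If you want to keep your direct combinatorial strategy you would need to either restrict your auxiliary symmetric graphs to have unmarked swapped vertices and show the resulting contraction relations still span enough constraints, or construct an honest comparison map between stability spaces with different $n$ --- in effect rediscovering the paper's $\lambda$ and dimension count.
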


\begin{proof}

In Lemma~\ref{degreecalculation} we computed the bidegrees of all generators of $\picrel_{g,n}^0(\mathbb{Z})$ given in Corollary~\ref{piczero} against curves whose dual graph is $\Gamma(i,S)$ and $\Gamma_j$. The corresponding square matrix is nonsingular: it consists of four blocks \[ \begin{pmatrix}\eqref{one}&\eqref{two} \\ \eqref{three} & \eqref{four} \end{pmatrix},\]  where \eqref{one} is the identity,  \eqref{two} is the zero matrix and \eqref{four} is $(2g-2+\delta_{1,g})$ times the identity. Combining this with Corollary~\ref{piczero}, we deduce that the composite map
\begin{equation} \label{stepzero}
(p_C \oplus p_D)\circ \rho \circ \deg \colon  \picrel_{g,n}^0(\mathbb{R}) \to C_{g,n} \oplus D_{g,n} \textrm{ is an isomorphism.}
\end{equation}
 Because of this, both claims of this proposition follow by proving that
 \begin{equation} (p_C \oplus p_D) \circ \rho \colon V_{g,n}^0 \to C_{g,n} \oplus D_{g,n} \textrm{ is injective.} \label{firststep} \end{equation}

The $g=0$ case is easily settled. We have that $T_{g,n} = C_{g,n}$, so \eqref{firststep} follows immediately from Lemma~\ref{Lemma: PhiDeterminedByTwoComponents}.

From now on we assume $g \geq 1$ and we aim for proving \eqref{firststep}. We simplify the problem by quotienting out the images of the space $W_{g,n} \otimes_{\mathbb{Z}} \mathbb{R}$ via $\deg$ and via $ (p_C \oplus p_D)\circ \rho \circ \deg$. Applying parts \eqref{one} and \eqref{two} of Lemma~\ref{degreecalculation}, we deduce that the image via $(p_C \oplus p_D)\circ \rho \circ \deg$ of $W_{g,n} \otimes \mathbb{R}$ equals $C_{g,n} \oplus \{0 \}$. Call $\Ker_{g,n}$ the kernel of $V_{g,n}^0 \to C_{g,n}$.  By \eqref{stepzero} we know that $p_D \circ \rho \colon \Ker_{g,n} \to D_{g,n}$ is surjective. For these reasons, to prove \eqref{firststep} it is enough to prove the inequality
\begin{equation} \label{secondstep}
\dim(\Ker_{g,n}) \leq n - \delta_{1,g}.
\end{equation}
(A posteriori, \eqref{secondstep} will be an equality.) We will prove Inequality~\eqref{secondstep} inductively in $n$.

When $g=n=1$ it is straightforward to check that $V_{1,1}^0= \{0\}$. When $g \geq 2$, the $n=0$ case of \eqref{secondstep} follows from Lemma~\ref{LemmaTLsurjective}, which implies that $(p_C \oplus p_D) \circ \rho= p_C \circ \rho$ is injective.

From now on, we apply the induction hypothesis. Assuming \eqref{secondstep} holds, we aim to prove it holds for $n+1$, i.e.
\begin{equation} \label{thirdstep}
\dim(\Ker_{g,n+1}) \leq n+1 - \delta_{1,g}.
\end{equation}
Define the subspace $K_{g,n+1} \subseteq \Ker_{g,n+1}$ as the subspace of vectors $\phi\in \Ker_{g,n+1}$ such that $\phi(\Gamma)(v)$ equals zero for all $\Gamma \in {G}_{g,n}$ and $v \in \operatorname{Vert}(\Gamma)$ such that $v$ becomes unstable after forgetting the last marking $n+1$. To prove \eqref{thirdstep} it is enough to prove the two inequalities \begin{equation} \label{inequality3} \dim (\Ker_{g,n+1}) \leq \dim (K_{g,n+1}) +1, \textrm{ and } \dim(K_{g,n+1}) \leq \dim ( \Ker_{g,n}).\end{equation}

 We first prove the inequality $ \dim (\Ker_{g,n+1}) \leq \dim (K_{g,n+1}) +1$. By applying Lemma~\ref{Lemma: PhiDeterminedByTwoComponents} again, we identify $\Ker_{g,n+1}$  with a subspace of $T_{g,n+1}$, and observe that $K_{g,n+1}$ contains (a posteriori, it will coincide with) the codimension-$1$ subspace of $\Ker_{g,n+1}$  of vectors $\phi$ whose component $\phi(\Gamma_{n+1})$ is trivial (for $\Gamma_{n+1}$ defined in Section~\ref{notationmoduli}).

We  prove the inequality $\dim(K_{g,n+1}) \leq \dim ( \Ker_{g,n})$ by showing the existence of a surjective linear map $\lambda \colon\Ker_{g,n} \to K_{g,n+1}$, which we define as follows. Let $\Gamma \in {G}_{g,n+1}$ and $\Gamma' \in {G}_{g,n}$ be obtained from $\Gamma$ by forgetting the last marking and possibly by stabilizing. If $\phi \in \Ker_{g,n}$, then $\lambda(\phi)$ is defined to equal $\phi$ on all vertices of $\Gamma$ that correspond bijectively to vertices of $\Gamma'$, and $0$ on the extra vertex (if any).  Because $\phi$ is automorphism-invariant and compatible with contractions, so is $\lambda(\phi)$. Because $\phi \in \Ker_{g,n}$, and because of the very definition of $\lambda$, we have that $\lambda(\phi) \in K_{g,n+1}$. Again by its very definition, $\lambda \colon \Ker_{g,n} \to K_{g,n+1}$ is surjective (a posteriori, it will be an isomorphism). This concludes our proof.
\end{proof}

From Proposition~\ref{Prop: pic=stab} we easily deduce Theorem~\ref{pic=stab} and Corollary~\ref{isoalphan}.

\begin{proof} (Of Theorem \ref{pic=stab})
By Proposition~\ref{Prop: pic=stab} we have that the multidegree map \[\deg \colon \picrel_{g,n}^0 (\mathbb{R}) \to V_{g,n}^0\] is an isomorphism. Moreover, both $\picrel_{g,n}^0(\mathbb{R}) \subsetneq \picrel_{g,n}(\mathbb{R})$ and $V_{g,n}^0 \subsetneq V_{g,n}$ are inclusions of codimension-$1$ subspaces. By definition, the multidegree map $\deg$ maps the base point of $\picrel_{g,n}^d(\mathbb{R})$ we defined in Equation~\eqref{basepoint1} to the base point of $V_{g,n}^d$ we defined in Equation~\eqref{phican}. This concludes our proof.
\end{proof}

\begin{proof} (Of Corollary \ref{isoalphan})
By Proposition~\ref{Prop: pic=stab} we have that the projection \[(p_C \oplus p_D) \circ \rho \colon V_{g,n}^0 \to C_{g,n} \oplus D_{g,n}\] is an isomorphism. Translation by $\phi_{\text{can}}^d$ is also an isomorphism of vector spaces $V_{g,n}^d \to V_{g,n}^0$. This concludes our proof.
\end{proof}

We conclude the section with some remarks on our Theorem \ref{pic=stab}.

\begin{remark} \label{compareoldpaper} In \cite[Definition~3.7]{kasspa} we introduced a vector space $V_{g,n}^{\text{TL}}$ governing universal stability over moduli of treelike curves, and when $d=g-1$. (Treelike curves are stable pointed curves whose nodes are either separating or belong to a unique irreducible component). By definition, there is a quotient map $q \colon V_{g,n}^{g-1} \to V_{g,n}^{\text{TL}}$ and Corollary~\ref{isoalphan} gives an isomorphism of $V_{g,n}^{\text{TL}}$ with $C_{g,n}$. By Corollary~\ref{isoalphan} we also have that if $\psi \in V_{g,n}^{\text{TL}}$, the preimage $q^{-1}(\psi)$ can be identified with $D_{g,n}$. \end{remark}

\begin{remark} \label{optimalonct} By Lemma~\ref{Lemma: PhiDeterminedByTwoComponents} the restriction map $V_{g,n}^0 \to T_{g,n}$ is injective, and in Corollary~\ref{isoalphan} we have made a choice of a subset $Q$ of the set of $2$-vertices loopless graphs such that the restriction map $V_{g,n}^0 \to \prod_{\Gamma \in Q} V^0(\Gamma)$ is a surjection.

We claim that if $Q$ is any such subset of the set of $2$-vertices loopless graphs, then $Q$ must contain all graphs $\Gamma(i,S)$. Indeed as a consequence of Equation~\eqref{one} we have that $W_{g,n} \to V_{g,n}^0$ is injective. Moreover, similar to what was seen in Equation~\eqref{two}, the bidegree of all elements of $W_{g,n}$ on curves with $2$ smooth components and at least $2$ nodes is trivial. It follows that the restriction map $V_{g,n}^0 \to \prod_{\Gamma \in Q} V^0(\Gamma)$ would not be surjective, were $Q$ not to contain some graph $\Gamma(i,S)$.

In this sense, choosing all graphs $\Gamma(i,S)$ in Definition~\ref{CDT} is natural. On the other hand, choosing all graphs $\Gamma_j$ in loc.~cit.~is arbitrary --- one could have opted for another choice of $n- \delta_{1,g}$ loopless graphs with $2$ vertices and at least $2$ edges.
\end{remark}
\label{stabilityspacesec}

\section{Compactified universal Jacobians}

\label{Section: construction}

For all nondegenerate $\phi \in V_{g,n}^d$ we construct $\phi$-compactified universal Jacobians $\Jb_{g,n}(\phi)$ as $k$-smooth, proper Deligne--Mumford stacks that are flat over $\Mb_{g,n}$ (see Definition~\ref{Def: moduliStacks} and Corollary~\ref{Cor: JbExists}). It is essentially Theorem~\ref{pic=stab} that allows us to directly construct all such universal Jacobians  from Simpson's result \cite[Theorem 1.21]{simpson}. We start by reviewing the notion of (Oda--Seshadri) $\phi$-stability on a single curve. 

	Let $(C, p_1, \ldots, p_n)$ be a  stable pointed curve with dual graph $\Gamma$ and $C_0 \subseteq C$ be a subcurve (i.e.~the union of some of the irreducible components of $C$) with dual graph $\Gamma_0 \subseteq \Gamma$. We write  $\deg_{\Gamma_0}(F)$ for the total degree $\deg_{C_{0}}(F)$ of the maximal torsion-free quotient of $F \otimes \mathcal{O}_{C_{0}}$ and $C_{0} \cap C_{0}^{c}$ or $\Gamma_{0} \cap \Gamma_{0}^{c}$ for the set of edges $e \in \operatorname{Edge}(\Gamma)$ that join a vertex of $\Gamma_0$ to a vertex of its complement $\Gamma_{0}^{c}$. Given a rank~$1$ torsion-free sheaf $F$ of degree $d$, we have $\deg_{C_{0}}(F)+\deg_{C_{0}^{c}}(F)=d-\delta_{\Gamma_{0}}(F)$ for $\delta_{\Gamma_{0}}(F)$ the number of nodes $p \in \Gamma_{0} \cap \Gamma_{0}^{c}$ such that the stalk of $F$ at $p$ fails to be locally free.

\begin{definition} \label{phistab}	
	Given $\phi \in V^d(\Gamma)$, we define a rank~$1$ torsion-free sheaf $F$  of degree $d$ on a nodal curve $C/k$ over an algebraically closed field to be \emph{$\phi$-(semi)stable} if
	\begin{equation} \label{Eqn: SymDefOfStability}
		\left| \deg_{\Gamma_0}(F)- \sum \limits_{v \in \operatorname{Vert}(\Gamma_0)} \phi(v) + \frac{\delta_{\Gamma_0}(F)}{2} \right| <  \frac{\#(\Gamma_0 \cap \Gamma_0^{c})-\delta_{\Gamma_0}(F)}{2}  \  \text{ (resp.~$\leq$).}
	\end{equation}
	for all proper subgraphs  $\emptyset \subsetneq \Gamma_0 \subsetneq \Gamma$.

We define $\phi \in V^d(\Gamma)$ to be \emph{nondegenerate} if every $\phi$-semistable sheaf is $\phi$-stable. We say that $\phi \in V_{g,n}^d$ is \emph{nondegenerate} if for all $\Gamma \in {G}_{g,n}$, the $\Gamma$-component $\phi(\Gamma)$ is nondegenerate in $V^d(\Gamma)$.
\end{definition}

\begin{definition} \label{Def: moduliStacks}
	Given $\phi \in V_{g,n}^d$ we say that a family of rank~$1$ torsion-free sheaves of degree $d$ on a family of nodal curves is \emph{$\phi$-(semi)stable} if Equation~\eqref{Eqn: SymDefOfStability} holds on all geometric fibers.  We define $\Jb_{g,n}^{ \text{pre}}(\phi)$ to be the category fibered in groupoids whose objects are  tuples $(C, p_1, \ldots, p_n; F)$ consisting of a family of stable $n$-pointed curves $(C/T, p_1, \ldots, p_n)$ of genus $g$, and a family of $\phi$-semistable rank~$1$ torsion-free sheaves $F$ of degree $d$ on $C/T$.  The morphisms of $\Jb_{g,n}^{\text{pre}}(\phi)$ over a $k$-morphism $t \colon T \to T'$ are pairs consisting of an isomorphism of pointed curves $\widetilde{t} \colon (C, p_1, \ldots, p_n) \cong (C'_{T}, (p'_1)_{T}, \ldots, (p'_n)_{T})$, and an isomorphism of $\mathcal{O}_{C}$-modules $F \cong \widetilde{t}^{*}(F'_{T})$.
	
	For every object $(C, p_1, \ldots, p_n; F)$ of $\Jb_{g,n}^{\text{pre}}(\phi)(T)$ the rule that sends $g \in \mathbb{G}_{m}(T)$ to the automorphism of $F$ defined by multiplication by $g$ defines an embedding $\mathbb{G}_{m}(T) \to \Aut(C, p_1, \ldots, p_n; F)$  that is compatible with pullbacks.  The image of this embedding is contained in the center of the automorphism group, so the rigidification stack is defined, and we call this stack the \emph{$\phi$-compactified universal Jacobian $\Jb_{g,n}(\phi)$}.
\end{definition}

Theorem~\ref{pic=stab} combined with Simpson's formalism implies that, for all nondegenerate $\phi$'s, the $\phi$-compactified universal Jacobian is a proper Deligne-Mumford stack.  We now show how to achieve this.

Let $(C, p_1, \ldots, p_n) \in \Mb_{g,n}$ and $A,M \in \Pic(C)$ with $A$ ample, and let $a$ (respectively $m$) be the total degree of $A$ (respectively of $M$). It follows from \cite[Inequality (10)]{kasspa} that if $\phi(A,M)$ is defined by the formula
\begin{equation} \label{phiequivtwisted}
\phi(A,M) := \frac{ (d+ 1-g+ m)}{a} \cdot \deg(A)+ \frac{1}{2} \cdot \deg(\omega_C) - \deg(M),
\end{equation}
and $F$ is any rank~$1$ torsion-free sheaf of degree $d$ on $C$, then $F$ is $\phi$-(semi)stable if and only if $F \otimes M$ is slope (semi)stable (in the sense of slope/Gieseker-stability) with respect to $A$.

In \cite[page~10]{cmkv}, the authors proved that for every $\phi \in V^d(\Gamma_C)$ there exist $A,M$ as above such that $\phi = \phi(A,M)$. Reasoning in the same way, and employing Theorem~\ref{pic=stab}, we can prove that the same holds over $\Mb_{g,n}$.

\begin{corollary} \label{corollary} Let $\phi \in V_{g,n}^d$. Then there exist line bundles $A,M$ on the universal curve $\Cb_{g,n} \to \Mb_{g,n}$ with $A$ ample relative to $\Mb_{g,n}$ such that, for every stable curve $(C, p_1, \ldots, p_n)$, a rank~$1$ torsion-free sheaf $F$  of degree $d$ on $C$ is $\phi$-semistable if and only if $F \otimes M$ is $A$-(semi)stable.
\end{corollary}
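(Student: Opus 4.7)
The plan is to lift the single-curve argument of \cite[page 10]{cmkv} to the universal setting, using Theorem~\ref{pic=stab} as the bridge. Given $\phi \in V_{g,n}^d$, I first rewrite Equation~\eqref{phiequivtwisted} as an equation for the unknown multidegree of $M$,
\[
\deg(M) = \frac{d+1-g+m}{a}\deg(A) + \tfrac{1}{2}\deg(\omega_\pi) - \phi,
\]
where $a$ and $m$ denote the relative total degrees of $A$ and $M$. A direct check (summing entries and using that $\omega_\pi$ has relative degree $2g-2$) shows the equation is consistent on total degrees, so for any choice of $a$, $m$, and ample multidegree $\deg(A)$ the right-hand side specifies a well-defined element of $V_{g,n}^m$.

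Next I would fix a specific relatively ample line bundle $A$ on $\Cb_{g,n}/\Mb_{g,n}$ (for instance a high tensor power of $\omega_\pi(\Sigma_1+\cdots+\Sigma_n)$). Theorem~\ref{pic=stab} identifies $V_{g,n}$ with the real relative Picard group $\picrel_{g,n}(\mathbb{R})$, so the right-hand side above is uniquely represented by a class in $\picrel_{g,n}(\mathbb{R})$. To promote this class to an honest integral line bundle $M$ on $\Cb_{g,n}$, I would adjust $m$ and the tensor power of $A$ to clear denominators, which succeeds whenever $\phi$ lies in the rational Picard lattice.

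For irrational $\phi$, I would reduce to the rational case via the stability polytope decomposition of Section~\ref{Section: Walls}: perturb $\phi$ to a rational $\phi'$ in the relative interior of the same face of the decomposition containing $\phi$. Since the defining hyperplanes are rational (Theorem~\ref{stabilityspace}), such a $\phi'$ exists, and by construction of the decomposition $\phi$-semistability coincides with $\phi'$-semistability on every fiber. Applying the previous construction to $\phi'$ then produces the desired pair $(A, M)$.

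I expect the main obstacle to be the integrality step: Theorem~\ref{pic=stab} supplies existence only in the real Picard group, whereas realizing $M$ as an honest line bundle requires the extra flexibility of varying $a$, $m$, and (for irrational $\phi$) performing a rational perturbation via stability polytopes. This reduction via the polytope decomposition is precisely what makes the globalization over $\Mb_{g,n}$ genuinely different from the pointwise argument of \cite{cmkv}.
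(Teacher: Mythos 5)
Your reduction of an arbitrary $\phi$ to a rational one is fine and matches the paper (the arrangement of degenerate/stability subspaces is rational and locally finite, so each face contains rational points with the same semistable sheaves). The gap is in the core construction: fixing $A$ and solving for $M$ does not go through, because the ``clear denominators'' step fails. First, replacing $A$ by a tensor power gives no freedom at all: the right-hand side of your equation involves $A$ only through $\deg(A)/a$, which is unchanged under $A \mapsto A^{\otimes N}$. Second, the equation is \emph{inhomogeneous} in the unknown $M$ --- its multidegree enters with coefficient $1$, and there is no scaling freedom, since twisting by $M^{\otimes k}$ defines a genuinely different stability condition --- so the only remaining parameter is the integer $m$, and varying $m$ shifts the required class $\deg(M)$ by integer multiples of $\deg(A)/a$. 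With your choice of $A$ a power of $\omega_{\pi}(\Sigma_1+\cdots+\Sigma_n)$, every candidate for $\deg(M)$ therefore lies in $-\phi$ plus a class whose denominators (with respect to the lattice $\picrel_{g,n}(\mathbb{Z})$, via Theorem~\ref{pic=stab}) divide $2(2g-2+n)$. Hence if $\phi$ is rational with a denominator having a prime factor not dividing $2(2g-2+n)$, no choice of $m$ or of the power of $A$ makes $\deg(M)$ integral, and no line bundle $M$ exists for that $A$. So your construction realizes only a restricted set of rational $\phi$, not all of them as claimed; ``$\phi$ lies in the rational Picard lattice'' is not a sufficient hypothesis.

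This is precisely why the paper solves the equation in the opposite direction. There one fixes $M:=\omega_{\pi}(p_1+\cdots+p_n)^{-t}$ with $t\gg 0$, chosen so that the multidegree one is forced to assign to $A$ satisfies the positivity condition \eqref{positive} on every component of every fiber, and then solves for $\deg(A)$: the unknown now carries a genuine scaling parameter, its total degree $e$, because slope stability with respect to $A$ depends only on the ray spanned by $\deg(A)$. The vector \eqref{amplelb} is proportional to $e$, so taking $e$ sufficiently divisible clears all denominators, Theorem~\ref{pic=stab} then produces an honest integral line bundle $A$, and the positivity arranged by the choice of $t$ guarantees that $A$ is relatively ample. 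If you insist on fixing the ample side first, you would have to let $\deg(A)/a$ itself depend on $\phi$ so as to absorb its denominators --- which is no longer a fixed $A$ and amounts to solving for $A$ after all.
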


\begin{proof}
We first observe that we can reduce to the case when $\phi$ has rational coefficients.  In order to do that, we claim that for all $\phi \in V_{g,n}^d$ there exists $\phi_{\epsilon} \in V_{g,n}^0$ such that $(\phi+\phi_{\epsilon})$ has rational coefficients, and $\phi$-(semi)stability is equivalent to $(\phi+\phi_{\epsilon})$-(semi)stability. This follows immediately from the fact that the locus of degenerate $\phi$'s in $V_{g,n}^d$ consists of a locally finite union of hyperplanes, a fact that we will later prove in Remark~\ref{locallyfinite2}.

 Define $M := \omega_{\pi}(p_1+ \ldots + p_n)^{-t}$ for $t >>0$ a sufficiently large integer such that the inequality
 \begin{equation} \label{positive}
 \phi(\Gamma_C)(v) + \deg_{C_v} (M) - \frac{\deg_{C_v}(\omega_C(p_1+ \ldots + p_n))}{2} >0
 \end{equation}
 holds for all $(C, p_1, \ldots, p_n) \in \Mb_{g,n}$ and for all vertices $v$ of $\Gamma_C$. That such $t$ exists follows from the fact that $\omega_{\pi}(p_1+\ldots+p_n)$ is ample relative to $\Mb_{g,n}$, from the fact that the multidegree of a line bundle on $\Cb_{g,n}$ is the same for curves with the same dual graph, and from the fact that the set ${G}_{g,n}$ of dual graphs modulo isomorphism is finite.

For simplicity, denote by $m$  the total degree of $M$. For any integer $e$, the parameter $\psi \in V^{e}_{g,n}$ defined by
\begin{equation} \label{amplelb} \psi(\Gamma_C) := \frac{e}{2d + 2m - (2g-2+n)} \cdot \left( 2 \phi(\Gamma_C) + 2 \deg_{C} (M) -  \deg_{C}(\omega_{\pi}(p_1+ \ldots + p_n)) \right)\end{equation}
has rational coefficients (because $\phi$ does), and so by Theorem~\ref{pic=stab} it is equal to $\deg(A)$ for some rational line bundle $A \in \picrel_{g,n}^{e}(\mathbb{R})$.  By taking $e$ to be sufficiently divisible, we can clear denominators so that $A$ is an \emph{integral} line bundle.
  Moreover, by possibly replacing $e$ with $-e$, we can assume that \[\frac{e} {2d+2m -(2g-2+n)}>0\] holds. Because Inequality~\eqref{positive} holds for all  geometric points of $\Mb_{g,n}$, we deduce that $A$ is ample relative to $\Mb_{g,n}$.

For all $(C, p_1, \ldots, p_n) \in \Mb_{g,n}$ we  have then $\phi(\Gamma_C)=\phi(A,M)(\Gamma_C)$, where the latter is defined by Formula~\eqref{phiequivtwisted}, and this concludes our proof.
\end{proof}

By combining Corollary~\ref{corollary}, \cite[Proposition 3.30]{kasspa} (Simpson's representability result \cite[Theorem 1.21]{simpson} rewritten in our language) and \cite[Lemma 3.33]{kasspa}, we deduce the following.
\begin{corollary} \label{Cor: JbExists}
	Let $\phi \in V_{g,n}^d$ be nondegenerate.  Then $\Jb_{g, n}(\phi)$ is a $k$-smooth Deligne--Mumford stack, and  the forgetful morphism $\Jb_{g,n}(\phi) \to \Mb_{g,n}$ is representable, proper and flat.
\end{corollary}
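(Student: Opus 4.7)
The plan is to transfer the question to the setting of classical slope (Gieseker) stability on the universal curve, where Simpson's representability machinery applies directly, and then read off the stated properties.

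First I would apply Corollary~\ref{corollary} to produce line bundles $A, M$ on $\Cb_{g,n}\to\Mb_{g,n}$ with $A$ relatively ample, such that the rule $F\mapsto F\otimes M$ identifies, fiberwise, the $\phi$-(semi)stable rank~$1$ torsion-free sheaves of degree $d$ with the $A$-(semi)stable rank~$1$ torsion-free sheaves of degree $d+\deg(M)$. Because $\phi$ is nondegenerate, $\phi$-semistability coincides with $\phi$-stability, and hence (via the twist) $A$-semistability coincides with $A$-stability on the relevant locus. So the moduli problem defined in Definition~\ref{Def: moduliStacks} matches, after the harmless twist by $M$, the moduli problem of $A$-stable sheaves on the universal stable pointed curve.

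Next I would invoke Simpson's representability theorem in the form of \cite[Proposition~3.30]{kasspa} (a repackaging of \cite[Theorem~1.21]{simpson}): the moduli stack of $A$-stable rank~$1$ torsion-free sheaves of fixed degree on $\Cb_{g,n}\to\Mb_{g,n}$, after rigidification by the scalar $\mathbb{G}_m$, is a Deligne--Mumford stack with a representable, separated, and proper forgetful morphism to $\Mb_{g,n}$. Since the $\mathbb{G}_m$-rigidification of Definition~\ref{Def: moduliStacks} is the same one that Simpson performs, this immediately yields the Deligne--Mumford property and the representability and properness of $\Jb_{g,n}(\phi)\to\Mb_{g,n}$.

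Finally, for $k$-smoothness and flatness over $\Mb_{g,n}$, I would apply \cite[Lemma~3.33]{kasspa}. The content is deformation-theoretic: the versal deformation of a pair (nodal curve, rank~$1$ torsion-free sheaf) is formally smooth, and the forgetful map to the versal deformation of the curve is flat of constant relative dimension $g$. Combined with the $k$-smoothness of $\Mb_{g,n}$ (Section~\ref{notationmoduli}), this gives both the smoothness of $\Jb_{g,n}(\phi)$ over $k$ and the flatness of $\Jb_{g,n}(\phi)\to\Mb_{g,n}$.

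The main obstacle is conceptual rather than computational: all the real work has already been absorbed into Corollary~\ref{corollary}, which in turn rested on the identification $\picrel_{g,n}(\mathbb{R})\cong V_{g,n}$ from Theorem~\ref{pic=stab}. Once this translation between abstract stability parameters and honest ample line bundles is in hand, the only care required is bookkeeping: verifying that the rigidification by $\mathbb{G}_m$ in Definition~\ref{Def: moduliStacks} matches Simpson's construction, and that the nondegeneracy hypothesis on $\phi$ is precisely what forces $A$-semistability to equal $A$-stability (so that one lands in the stable, and therefore Deligne--Mumford and proper, locus rather than the merely semistable one).
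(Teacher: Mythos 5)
Your proposal is correct and follows essentially the same route as the paper, which deduces the corollary precisely by combining Corollary~\ref{corollary} (translating nondegenerate $\phi$-stability into $A$-stability of the twist by $M$), Simpson's representability result in the form of \cite[Proposition~3.30]{kasspa}, and the deformation-theoretic \cite[Lemma~3.33]{kasspa} for $k$-smoothness and flatness. The bookkeeping points you flag (matching the $\mathbb{G}_m$-rigidification and using nondegeneracy to force stable $=$ semistable) are exactly the content the paper leaves implicit.
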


 Following existing literature we will refer to $\Jb_{g,n}(\phi)$ as a \emph{fine} $\phi$-compactified universal  Jacobian. The authors expect that when $\phi$ is degenerate $\Jb_{g,n}(\phi)$ can naturally be given the structure of an Artin stack.

\begin{remark} \label{compareoldpaper2} Corollary~\ref{Cor: JbExists} generalizes our previous \cite[Corollary 3.41]{kasspa} by removing the hypothesis $n\geq 1$ and allowing for any degree $d$ (not necessarily $d=g-1$).

Moreover, by varying $\phi$ in $V_{g,n}^d$, in Corollary~\ref{Cor: JbExists} we have described all proper extensions of $\phi$-compactified universal Jacobians from moduli of treelike curves 
to $\Mb_{g,n}$. 
We observed in Remark~\ref{compareoldpaper} that such extensions are parameterized by the nondegenerate elements of $D_{g,n}$. The subspaces of  $D_{g,n}$ corresponding to degenerate elements will be explicitly described in \eqref{otherwalls}.
\end{remark}

\begin{remark} \label{esteves} Esteves constructed in \cite{esteves} the compactified Jacobian of a family of reduced curves over a scheme. Building on his work,   Melo constructed the corresponding compactified universal Jacobians over $\Mb_{g,n}$ in \cite{melo}.

In their formalism, a compactified universal Jacobian in degree $d$ is is defined in terms of a (universal) $d$-polarization, which is defined to be a vector bundle $\mathcal{E}$ of rank $r$ and degree $r(d+1-g)$ on the universal curve $\pi\colon\Cb_{g,n}\to \Mb_{g,n}$.
We claim that, for rank~$1$ torsion-free sheaves of degree $d$, the notion of $\mathcal{E}$-(semi)stability of Esteves--Melo \cite[Definition 2.9]{melo} coincides with the notion of $\phi(\mathcal{E})$-(semi)stability that we gave in Definition~\ref{phistab} after posing
\begin{equation} \label{dictionary} \phi(\mathcal{E}):= \frac{\deg(\mathcal{E})}{r} + \frac{\deg(\omega_{\pi})}{2} \quad \in V_{g,n}^d.\end{equation}
The claim follows immediately by observing that, because the total degree is fixed, the lower bound for $\deg_{C_0}(F)$ of \cite[Inequality 2.1]{melo} on all subcurves $C_0$ of $C$ is equivalent to Inequality~\eqref{Eqn: SymDefOfStability} involving $\deg_{\Gamma_{C_0}}(F)$ on all proper subgraphs $\emptyset \subsetneq \Gamma_{C_0} \subsetneq \Gamma_C$. As a consequence of our claim, when $\phi(\mathcal{E})$ is nondegenerate, $\Jb_{g,n}(\phi(\mathcal{E}))$ and Melo's moduli stack $\Jb_{g,n}^{\mathcal{E}, \text{ss}}$ are isomorphic as Deligne--Mumford stacks over $\Mb_{g,n}$.

Formula~\eqref{dictionary} shows that every $d$-polarization $\mathcal{E}$ can be translated into a $\phi$-stability condition. We deduce the converse as an easy consequence of Theorem~\ref{pic=stab}. We only discuss the case when $g \geq 2$ (the remaining cases are similar and easier).

For a given $\phi' \in V_{g,n}^d$, Theorem~\ref{pic=stab} implies that there exist $L \in \pic^0(\Cb_{g,n})$ and $\mathbb{N} \ni e >>0$ such that
\[
\phi' - \phi_{\text{can}}^d = \frac{\deg(L)}{e}.
\]
Defining the $d$-polarization by
\[
\mathcal{E} := \omega_{\pi}^{\otimes e(d+1-g)} \otimes L^{\otimes(2g-2)} \oplus \mathcal{O}^{\oplus ((2g-2) e-1)}
\]
we have that $\phi'= \phi(\mathcal{E})$, and this completes the proof of our claim.
\end{remark}

\section{The stability subspaces and polytopes}

\label{Section: Walls}

In this section we describe how  $\phi$-stability depends on $\phi$.  The stability space $V^d_{g,n}$ naturally decomposes into \emph{stability polytopes} defined by the property that two stability parameters lie in a common polytope if and only if they define the same set of stable sheaves.  The main result is Theorem~\ref{stabilityspace}, where we explicitly describe the ``\emph{stability subspaces}'', the affine linear subspaces that define the stability polytopes, in terms of the isomorphism $V_{g,n}^d \cong C_{g,n} \oplus D_{g,n}$ of Corollary~\ref{isoalphan}.  One consequence of Theorem~\ref{stabilityspace} is that every stability polytope in $V_{g,n}^d$ is a product of a hypercube in $C_{g,n}$ and of a polytope in $D_{g,n}$. The stability subspaces are hyperplanes unless $n=0$ and $\gcd(d+1-g,2g-2) = 1$, in which case some of them are empty and some coincide with $V_{g,0}^d$  (see Remark~\ref{nomarkedpoints}). 

To begin, we recall some notation from  \cite[Section 3.2]{kasspa}.

We say that a subgraph $\emptyset \subsetneq \Gamma_0 \subsetneq \Gamma$ is \emph{elementary} if both $\Gamma_0$ and its complement $\Gamma_0^c$ are connected.	(The vertex set of an elementary subgraph is  an elementary cut in the sense of \cite[page~31]{oda79}.) We  now define the combinatorial objects that control stability of rank~$1$ torsion-free sheaves on a stable pointed curve whose dual graph is $\Gamma$.

	\begin{definition} \label{Definition: PolytopesOneCurve}  Let $\Gamma$ be a stable marked graph.  To a subgraph $\emptyset \subsetneq \Gamma_0 \subsetneq \Gamma$ and an integer $k \in \mathbb{Z}$ we associate the affine linear function  $\ell(\Gamma_0, k) \colon V^d(\Gamma) \to \mathbb{R}$ defined by
	\begin{equation} \label{linearfunctional}
		\ell(\Gamma_0, k)(\phi)  := k- \sum_{v \in \operatorname{Vert}(\Gamma_0)}\phi(v) + \frac{\#(\Gamma_0 \cap \Gamma_{0}^{c})}{2}.
	\end{equation}

	When $\Gamma_0$ is an elementary subgraph of $\Gamma$ we call the hyperplane
	\begin{equation} \label{stabilitypolytope}
		 H(\Gamma_0, k) := \{ \phi \in V^d(\Gamma) ;  \ell(\Gamma_0, k)(\phi) =0 \} \subsetneq V^d(\Gamma)
	\end{equation}
	a \emph{stability hyperplane}. (An element $\phi \in V^d(\Gamma)$ is nondegenerate according to Definition~\ref{phistab} if and only if $\phi$ does not belong to any such hyperplane.)

A \emph{stability polytope} in $V^d(\Gamma)$ is defined to be a connected component  of the complement of all stability hyperplanes of $V^d(\Gamma)$:
	\[
		V^d(\Gamma) - \bigcup\limits_{\substack{\emptyset \subsetneq \Gamma_0 \subsetneq \Gamma \text{ elementary}\\ k \in \mathbb{Z}}} H(\Gamma_0, k).
	\]
If $\phi_0 \in V^d(\Gamma)$ is nondegenerate, we write $\mathcal{P}(\phi_0)$ for the unique stability polytope in $V^d(\Gamma)$ that contains $\phi_0$.
\end{definition}

\begin{remark} \label{rationalbounded}
For $\phi_0 \in V^d(\Gamma)$ nondegenerate, the stability polytope  $\mathcal{P}(\phi_0)$ is a rational bounded convex polytope. Indeed, by definition we have
\begin{equation} \label{Eqn: StabilityPolytope}
	\mathcal{P}(\phi_0) = \{ \phi \in V^d(\Gamma) : \ell(\Gamma_0, k)(\phi)>0 \text{ for all } \ell(\Gamma_0, k) \text{ s.t.~}\ell(\Gamma_0, k)(\phi_0) > 0 \},
\end{equation}
and in Equation~\eqref{Eqn: StabilityPolytope}  only finitely many $\ell(\Gamma_0, k)$'s are needed to define $\mathcal{P}(\phi_0)$.
\end{remark}

\begin{remark} \label{locallyfinite} For $\Gamma$ a stable marked graph, the hyperplane arrangement consisting of the collection $\{H(\Gamma_0, k)\}_{\emptyset \subsetneq \Gamma_0 \subsetneq \Gamma, k \in \mathbb{Z}}$ is \emph{locally finite}. This follows from the fact that there are only finitely many subgraphs $\Gamma_0$ of $\Gamma$, and from Equation~\eqref{linearfunctional}.
\end{remark}

Assume $\phi_1, \phi_2 \in V(\Gamma)$ are nondegenerate. As a consequence of \cite[Lemma 3.20]{kasspa} we have that $\phi_1$-(semi)stability coincides with $\phi_2$-(semi)stability if and only if $\mathcal{P}(\phi_1) = \mathcal{P}(\phi_2)$. 
In \cite[Example 3.21]{kasspa} we showed that this would no longer be true if  in Definition~\ref{Definition: PolytopesOneCurve} the \emph{elementary} condition for the subgraphs was dropped.

The simplest nontrivial examples of this stability decomposition occur when  $C$ has $2$ irreducible components.
\begin{example} \label{exampledollarsign} ($\phi$-stability on vine curves). Suppose that $C$ is a nodal curve consisting of $2$ smooth irreducible components of genera $i$ and $j$ connected by $\alpha$ nodes. Its dual graph $\Gamma(\alpha,i,j)$ has $2$ vertices of genera $i$ and $j$ joined by $\alpha$ edges.

The degree $d$ stability space $V^d(\Gamma)$ is the line in the plane $V(\Gamma)$ consisting of points with coordinates summing to $d$. It immediately follows from Definition~\ref{Definition: PolytopesOneCurve} that the stability hyperplane $H(\Gamma, \Gamma_0, k)$ of $V^d(\Gamma)$ is the point $(k+\alpha/2, d -k-\alpha/2)$ for $\Gamma_0$ the complete subgraph of $\Gamma$ that contains the first vertex. 

The stability polytopes are therefore the segments whose endpoints have integer (resp.~half-integer) coordinates when $\alpha$ is even (resp.~when $\alpha$ is odd). If $\phi$ belongs to the relative interior of one such segment, there are $\alpha$ distinct $\phi$-stable bidegrees of line bundles on $C$ and these bidegrees are those that are closest to $\phi$. If  $\phi$ varies from the relative interior of a segment to one of its endpoints, the stable bidegree furthest away from that endpoint becomes strictly semistable as does the  nearest unstable bidegree.
\end{example}

We now define the analogous objects for $V_{g,n}^d$. Similar to what we did in Definition~\ref{Definition: PolytopesOneCurve} for a single stable graph $\Gamma$, we introduce stability subspaces $V_{g,n}^d$ such that $\phi$ is nondegenerate according to Definition~\ref{phistab} if and only if $\phi$ does not belong to one such subspace.
\begin{definition} \label{Definition: PolytopesFamilyCurves}
	For $\Gamma$ a stable  marked graph, $\emptyset \subsetneq \Gamma_0 \subsetneq \Gamma$ an elementary subgraph, and $k \in \mathbb{Z}$ an integer, we call the subspace
	\[
		H(\Gamma, \Gamma_0, k) := \{ \phi \in V_{g,n}^d:  \ell(\Gamma_0, k)(\phi(\Gamma)) =0 \}
	\]
	 of $V_{g,n}^d$ a \emph{stability subspace}. 

A \emph{stability polytope} in $V_{g,n}^d$ is defined to be a connected component of the complement of all stability subspaces of $V_{g,n}^d$, i.e.~a connected component of
	\[
		V_{g,n}^d - \bigcup\limits_{\substack{ \Gamma \text{ stable graph} \\\emptyset \subsetneq \Gamma_0 \subsetneq \Gamma \text{ elementary}\\ k \in \mathbb{Z}}} H(\Gamma, \Gamma_0, k)
	\]
If $\phi_0 \in V^d_{g,n}$ is nondegenerate, we write $\mathcal{P}(\phi_0)$ for the unique stability polytope in $V^d_{g,n}$ that contains $\phi_0$.

We let $\mathcal{P}^d_{g,n}$ be the set of stability polytopes in $V_{g,n}^d$ and define \emph{the stability polytope decomposition} \[\mathcal{P}_{g,n}:= \bigcup_{d \in \mathbb{Z}} \mathcal{P}^d_{g,n}.\]
\end{definition}

The subspaces $H(\Gamma, \Gamma_0, k)$ that we introduced in Definition~\ref{Definition: PolytopesFamilyCurves} are often hyperplanes, although we can have $H(\Gamma, \Gamma_0, k) = \emptyset$ or $H(\Gamma, \Gamma_0,k) = V^d_{g,n}$ (in this last case the stability polytope decomposition is empty). These degenerate subspaces only occur for $n=0$  and certain $d$'s as described in Remark~\ref{nomarkedpoints},  an application of Theorem~\ref{stabilityspace}.

\begin{remark} \label{rationalbounded2} As was the case for $V^d(\Gamma)$, the stability polytopes in $V_{g,n}^d$ are rational bounded convex polytopes.
\end{remark}

\begin{remark} \label{locallyfinite2} We can deduce that
\begin{equation} \label{hyperarr} \{H(\Gamma, \Gamma_0, k)\} \quad \textrm{for all } {\Gamma\in {G}_{g,n}, \ \emptyset \subsetneq \Gamma_0 \subsetneq \Gamma \textrm{ elementary and } k \in \mathbb{Z}} \end{equation}
is locally finite from the analogous result for the stability hyperplane arrangement of $V^d(\Gamma)$ (Remark~\ref{locallyfinite}). Indeed, this follows immediately from the fact that \eqref{hyperarr} is the restriction to $V_{g,n}^d$ of the decomposition of $\prod_{\Gamma \in {G}_{g,n}} V^d(\Gamma)$ given by products of stability hyperplanes.
\end{remark}

It follows from Lemma~\ref{Lemma: PhiDeterminedByTwoComponents} that a stability parameter $\phi \in V_{g,n}^d$ is uniquely determined by its restriction to all loopless graphs with $2$ vertices. We now prove analogous statements about stability subspaces and polytopes.

\begin{lemma} \label{Lemma: HyperplanesByTwoComponents}
	If $H(\Gamma, \Gamma_0, k) \subseteq V_{g,n}^d$ is a stability subspace, then there exists a loopless $2$-vertex stable graph $\Gamma'$ and an elementary subgraph $\Gamma_0'$ of $\Gamma'$ such that
	$
		H(\Gamma, \Gamma_0, k) = H(\Gamma', \Gamma_0', k).
	$
\end{lemma}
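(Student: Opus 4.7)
The plan is to produce $\Gamma'$ by successively contracting all edges internal to $\Gamma_0$, then all edges internal to $\Gamma_0^c$, so that only the edges joining $\Gamma_0$ to $\Gamma_0^c$ survive, connecting two remaining vertices.

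First I would exploit the hypothesis that $\Gamma_0$ is elementary: both $\Gamma_0$ and $\Gamma_0^c$ are connected, so each admits a spanning tree, and I can contract the edges of these spanning trees one at a time as in Section~\ref{contractions}. After all spanning tree edges are contracted, $\Gamma_0$ has been replaced by a single vertex $v_0'$ whose only attached edges are either loops (the non-tree edges of $\Gamma_0$) or the edges formerly in $\Gamma_0 \cap \Gamma_0^c$. Contracting all such loops raises the genus of $v_0'$ by $1$ each; by the arithmetic-genus count, the final genus of $v_0'$ equals $g(\Gamma_0)=\sum_{v\in \operatorname{Vert}(\Gamma_0)}g(v)+\#\operatorname{Edge}(\Gamma_0)-\#\operatorname{Vert}(\Gamma_0)+1$. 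Doing the same inside $\Gamma_0^c$ produces the other vertex $v_1'$. Contractions preserve stability (this is used throughout the theory of $\Mb_{g,n}$), so the resulting marked graph $\Gamma'$ is a stable, $2$-vertex, loopless graph in which the set of edges between $v_0'$ and $v_1'$ is in canonical bijection with $\Gamma_0 \cap \Gamma_0^c$; composing with an isomorphism, I replace $\Gamma'$ by its chosen representative in ${G}_{g,n}$.

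Next I would let $\Gamma_0'\subset \Gamma'$ be the complete subgraph on the single vertex $v_0'$. This is elementary because both $\Gamma_0'$ and $(\Gamma_0')^c$ are single vertices, hence connected, and $\#(\Gamma_0'\cap(\Gamma_0')^c)=\#(\Gamma_0\cap\Gamma_0^c)$ by construction. The contractions above assemble into a sequence $\Gamma=\Gamma^{(0)}\to\Gamma^{(1)}\to\cdots\to\Gamma^{(N)}\cong\Gamma'$ of single-edge contractions, and applying the compatibility relation \eqref{Eqn: CompatibilityRelation} inductively along this sequence yields, for every $\phi\in V_{g,n}^d$,
\[
\phi(\Gamma')(v_0')=\sum_{v\in\operatorname{Vert}(\Gamma_0)}\phi(\Gamma)(v).
\]
Substituting this and the equality of edge counts into the formula \eqref{linearfunctional} for the affine functional $\ell$ gives $\ell(\Gamma_0',k)(\phi(\Gamma'))=\ell(\Gamma_0,k)(\phi(\Gamma))$ for every $\phi\in V_{g,n}^d$, whence $H(\Gamma',\Gamma_0',k)=H(\Gamma,\Gamma_0,k)$.

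The only real obstacle is bookkeeping: one must verify carefully that (i) the iterated contraction does exist (the connectedness supplied by elementariness is exactly what is needed), (ii) stability and arithmetic genus are preserved so that the final graph belongs to $G_{g,n}$, (iii) loops formed during the process can be safely contracted without altering either the edges of $\Gamma_0\cap\Gamma_0^c$ or the vertex sum $\sum_{v\in\operatorname{Vert}(\Gamma_0)}\phi(\Gamma)(v)$, and (iv) the single-step compatibility relation telescopes correctly over a composite of contractions. Each of these is routine once the contraction sequence is written down explicitly.
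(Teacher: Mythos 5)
Your proof is correct and follows the same route as the paper: contract edges so that $\Gamma_0$ and $\Gamma_0^c$ each collapse to a single vertex, contract the resulting loops, and then apply the compatibility relation \eqref{Eqn: CompatibilityRelation} inductively to see that $\phi(\Gamma')(v_0')=\sum_{v\in\operatorname{Vert}(\Gamma_0)}\phi(\Gamma)(v)$, while the number of connecting edges is unchanged, so the two stability subspaces coincide. Your spanning-tree and genus bookkeeping just makes explicit the details the paper leaves implicit.
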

\begin{proof}
Starting with $\Gamma$, contract edges so that $\Gamma_0$ is contracted to a vertex $w$ and its complement $\Gamma_0^{c}$ to a vertex $w^c$, and then contract all resulting loops. Call $\Gamma'$ the resulting graph with $2$ vertices $w$ and $w^c$.  By inductively applying compatibility with contractions, we find $\phi(\Gamma')(w) = \sum_{v \in \Gamma_0} \phi(\Gamma)(v)$. This implies that 	$	 H(\Gamma, \Gamma_0, k)$ equals $H(\Gamma', \Gamma_0', k)$.
\end{proof}




Using the results of Section~\ref{stabilityspacesec}, we will now explicitly write down the stability subspaces $H(\Gamma, \Gamma_0, k)$ of $V_{g,n}^d$ defined in Definition~\ref{Definition: PolytopesFamilyCurves}. By Corollary~\ref{isoalphan}, an element $\phi \in V_{g,n}^d$ is uniquely determined by its image under $(p_C \oplus p_D) \circ \rho^d$, i.e.~the projection to $C_{g,n} \oplus D_{g,n}$ of the difference $\phi - \phi_{\text{can}}^d$. We will describe the stability subspaces of $V_{g,n}^d$ as inverse images  of certain affine linear subspaces $W_C(i,S,k)$ of $C_{g,n}$ and $W_D(\ell,S,k)$ of $D_{g,n}$. 

We first define the subspaces of $C_{g,n}$. Denote by $(\alpha_{i,S}, -\alpha_{i,S})$ the component $\psi(\Gamma(i,S))$ of each element $\psi \in C_{g,n}$. For each triple $(i,S, k)$ with $(i,S)$ as in Section~\ref{notationmoduli} and $k \in \mathbb{Z}$, we define the hyperplane $W_C(i,S,k)$ of $C_{g,n}$ by the equations \begin{equation} \label{wallsct} W_C(i,S,k):=\begin{cases} \alpha_{i,S} =  k- \frac{(2i-1)(d+1-g)}{2g-2}  & \textrm{ when } g \geq 2 \\ \alpha_{i,S} = k-\frac{1}{2}  & \textrm{ when }  g \leq 1. \end{cases}\end{equation} 

For $g \geq 1$, we now define the relevant subspaces of $D_{g,n}$. For each element $\psi \in D_{g,n}$ we denote by $(x_j, -x_j)$ the component $\psi(\Gamma_j)$ for $j = 1+ \delta_{1,g}, \ldots, n$.
For each triple $(\ell, S, k)$ with  $0 \leq \ell < 2g-2+ \delta_{1,g},  S\subseteq [n] \setminus \{\delta_{1,g}\}$ and $k \in \mathbb{Z}$ (excluding the ``unstable''  case  $\ell =0$ and $S=\emptyset$), we define the subspace $W_D(\ell, S, k)$ of $D_{g,n}$ by the equation
\begin{equation} \label{otherwalls}
W_D(\ell, S, k) := \left\{ \vec{x} : \ x_{S} + \frac{\ell  (d+1-g-x_{[n]\setminus \{\delta_{1,g}\}})}{2g-2+ \delta_{1,g}} =k\right\}.
\end{equation}
(This subspace is a hyperplane when $n \geq 1$. When $n=0$, it can either be empty or coincide with $D_{g,0} = \{ 0 \} $. See Remark~\ref{nomarkedpoints} for more details).

We now show that, under the isomorphism $(p_C \oplus p_D)\circ \rho^d \colon V^{d}_{g,n} \to C_{g, n} \oplus D_{g,n}$ in Corollary~\ref{isoalphan}, the stability subspaces of $V_{g,n}^d$ defined in Definition~\ref{Definition: PolytopesFamilyCurves} correspond to the explicit subspaces just introduced.

\begin{theorem} \label{stabilityspace} The stability subspaces of $V_{g,n}^d$  are the pullbacks via $p_C \circ \rho^d$ of the hyperplanes $W_C(i,S,k)$ of $C_{g,n}$ defined in \eqref{wallsct}, and the pullbacks via $p_D \circ \rho^d$ of the subspaces $W_D(\ell, S, k)$ of $D_{g,n}$ defined in \eqref{otherwalls}.
\end{theorem}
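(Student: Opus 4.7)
My plan is to use the explicit isomorphism $V^d_{g,n} \cong C_{g,n} \oplus D_{g,n}$ of Corollary~\ref{isoalphan}, together with the multidegree description of Proposition~\ref{Prop: pic=stab}, in order to rewrite each stability hyperplane in the coordinates $(\alpha_{i',S'})$ on $C_{g,n}$ and $(x_{j'})$ on $D_{g,n}$. By Lemma~\ref{Lemma: HyperplanesByTwoComponents}, every stability subspace of $V_{g,n}^d$ equals $H(\Gamma, \{v_1\}, k)$ for some loopless stable graph $\Gamma$ with two vertices $v_1, v_2$ of genera $i, j$ and markings $S, S^c$, joined by $\alpha \geq 1$ edges; the defining equation is simply $\phi(\Gamma)(v_1) = k + \alpha/2$. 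So the theorem reduces to rewriting this linear equation in the $C_{g,n} \oplus D_{g,n}$ coordinates and matching it to \eqref{wallsct} or \eqref{otherwalls}, depending on $\alpha$.

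To do so, I would use Proposition~\ref{Prop: pic=stab} to write $\phi - \phi_{\text{can}}^d = \deg(L)$ for a unique $L \in \picrel_{g,n}^0(\mathbb{R})$, and expand $L = \sum_{(i'',S'')} a_{i'',S''}\,\mathcal{O}(C_{i'',S''}^+) + \sum_k b_k T_k$ via Corollary~\ref{piczero}. For $g \geq 2$, the identities in Lemma~\ref{degreecalculation} immediately give $b_k = x_k/(2g-2)$. The case $\alpha = 1$, where $\Gamma = \Gamma(i,S)$, is then direct from Lemma~\ref{degreecalculation}: the equation $\phi(\Gamma(i,S))(v_1) = k + 1/2$ translates, after absorbing an integer shift into $k$, into the equation $W_C(i, S, k')$ of \eqref{wallsct}.

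The substantive step is the case $\alpha \geq 2$. The key geometric input is that a curve $C$ with a two-vertex loopless dual graph having $\alpha \geq 2$ edges has no separating node, since removing any one edge leaves $\alpha - 1 \geq 1$ edges still joining the two components. Hence $[C]$ lies in no boundary divisor $\Delta(i'',S'')$, so $C$ is disjoint from $\pi^{-1}(\Delta(i'',S'')) \supseteq C_{i'',S''}^+$ inside $\Cb_{g,n}$, and $\mathcal{O}(C_{i'',S''}^+)|_C$ is trivial on every component. Only the twisted-section contributions to $\deg(L)(\Gamma)(v_1)$ survive, and a direct computation using $T_k = \mathcal{O}((2g-2)\Sigma_k)\otimes \omega_\pi^{-1}$ and adjunction yields $\deg(T_k)(\Gamma)(v_1) = (2g-2)[k \in S] - (2i-2+\alpha)$, whence
\[
\deg(L)(\Gamma)(v_1) \;=\; x_S \;-\; \frac{(2i - 2 + \alpha)\, x_{[n]}}{2g-2}.
\]
Combined with $\phi_{\text{can}}^d(\Gamma)(v_1) = d(2i-2+\alpha)/(2g-2)$, the equation $\phi(\Gamma)(v_1) = k + \alpha/2$ becomes the equation for $W_D(\ell, S, k')$ with $\ell = 2i - 2 + \alpha$ and $k' = k + 1 - i$.

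The remaining work is bookkeeping. As $(i, j, \alpha)$ ranges over admissible two-vertex loopless stable data, $\ell = 2i - 2 + \alpha$ sweeps $\{0, 1, \ldots, 2g - 2\}$; the symmetry between $\Gamma_0' = \{v_1\}$ and $\Gamma_0' = \{v_2\}$ (which swaps $\ell \leftrightarrow 2g - 2 - \ell$ and $S \leftrightarrow S^c$) identifies $\ell = 2g-2$ with $\ell = 0$, justifying the range $\ell \in \{0, \ldots, 2g-3\}$; and stability of a genus-$0$ vertex at $\alpha = 2$ forces $S \neq \emptyset$, matching the sole exclusion in \eqref{otherwalls}. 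The cases $g \leq 1$ are similar with minor modifications to $\phi_{\text{can}}^d$ and to the index set of twisted sections (in particular, $D_{0,n} = 0$ leaves only the $W_C$-type subspaces in genus $0$). The main hurdle throughout is the Picard-group calculation above, especially the vanishing of $\mathcal{O}(C_{i'',S''}^+)|_C$ on curves without a separating node of type $(i'',S'')$, which is precisely what makes the $W_D$-equations depend only on the coordinates $x_k$ and not on the $\alpha_{i'',S''}$.
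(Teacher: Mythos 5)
Your proposal is correct and follows essentially the same route as the paper: reduce to loopless two-vertex graphs via Lemma~\ref{Lemma: HyperplanesByTwoComponents}, then use the Picard-group coordinates of Corollary~\ref{isoalphan} and the degree computations of Lemma~\ref{degreecalculation} to rewrite $\phi(\Gamma)(v_1)=k+\alpha/2$ as the equations \eqref{wallsct} (for $\alpha=1$) and \eqref{otherwalls} (for $\alpha\geq 2$), with the same index shift $k'=k+1-i$ and the same formula $\phi(\Gamma)(v_1)=x_S+\tfrac{2i-2+\alpha}{2g-2}(d-x_{[n]})$ as the paper's \eqref{explicitformula}. The only (harmless) difference is that you bypass the paper's explicit inversion of the degree matrix in \eqref{changebasis} by noting that the boundary classes restrict trivially on vine curves with at least two nodes and computing the twisted-section degrees by adjunction.
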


\begin{proof}
By Definition~\ref{Definition: PolytopesFamilyCurves}, the stability subspaces $H(\Gamma, \Gamma_0, t)$ of
\[
V_{g,n}^d \subseteq \prod_{\Gamma \in {G}_{g,n}} V^d(\Gamma)
\]
are the intersections of $V_{g,n}^d$ and the obvious hyperplanes  of $\prod_{\Gamma \in {G}_{g,n}} V^d(\Gamma)$ that, by abuse of notation, we also denote by $H(\Gamma, \Gamma_0, t)$. Here $\Gamma$ varies over all stable graphs, $\Gamma_0$ varies over all elementary subgraphs of $\Gamma$, and $t \in \mathbb{Z}$. By Lemma~\ref{Lemma: HyperplanesByTwoComponents}, it is enough to consider the hyperplanes $H(\Gamma, \Gamma_{0}, t)$ for $\Gamma$ a loopless graph with $2$ vertices (rather than an arbitrary stable graph).  We prove the result by verifying that, under the isomorphism $(p_C \oplus p_D) \circ \rho^d \colon V^{d}_{g,n} \to C_{g,n} \oplus D_{g, n}$, the hyperplane $H(\Gamma, \Gamma_0, t)$ corresponds to either a hyperplane $W_C(i,S,k) \oplus D_{g,n}$ or a subspace (often also a hyperplane) $C_{g,n} \oplus W_D(\ell,S,k)$.  

To do this, we compute $\phi(\Gamma)$ in terms of \[(p_C \oplus p_D) \circ \rho^d (\phi)= (p_C \oplus p_D) (\phi- \phi_{\text{can}}^d) \in C_{g,n} \oplus D_{g,n}\] for all loopless graphs $\Gamma$ with $2$ vertices. To this end, we let $\psi : = \phi -\phi^d_{\text{can}} \in V^0_{g,n}$ and write $\psi(\Gamma(i,S)) = (\alpha_{i,S}, - \alpha_{i,S})$ and   $\psi(\Gamma_j) = (x_{j}, -x_{j})$ for all $j=1, \ldots, n$. We assume $g \geq 2$ to simplify the notation (the cases $g=0,1$ are similar).


If $\Gamma=\Gamma(i, S)$ is a stable graph with $2$ vertices and $1$ edge, then every associated stability subspace is of the form $H(\Gamma, \Gamma_0, t)$, for some $t\in \mathbb{Z}$ and $\Gamma_0$ the first vertex of $\Gamma$ (according to the convention in Section~\ref{notationmoduli}).  The component $\phi(\Gamma(i, S))$ is given by
\begin{equation} \label{blah}
	\phi(\Gamma(i,S)) = 
		\left(\alpha_{i,S} + \frac{d}{2g-2} (2i-1), - \alpha_{i,S} + \frac{d}{2g-2} (2g-2i-1)\right),
\end{equation}
and we conclude that $H(\Gamma, \Gamma_0, t)$ corresponds to the hyperplane  $W_C(i, S, t+1-i) \oplus D_{g,n}$.

 Suppose now that $\Gamma$  is any other loopless graph with $2$ vertices and say that its first vertex has genus~$i$, contains the markings $S$, and is joined to the second vertex by $\alpha \geq 2$ edges.  Assume that $\Gamma_0$ is the first vertex of $\Gamma$. 

The component $\psi(\Gamma)$ can be expressed in terms of the $\alpha_{i, S}$'s and the $x_j$'s  as
\begin{equation} \label{explicitformula}
	 \phi(\Gamma) = \left(x_{S}+ \frac{2i-2 +\alpha}{2g-2} \cdot (d- x_{[n]}), \  x_{S^c}+ \frac{2g-2i - \alpha}{2g-2}\cdot (d-x_{[n]}) \right).
\end{equation}
Indeed,
apply Formulas~\eqref{one}, \eqref{two}, \eqref{three} and \eqref{four} and invert the corresponding matrix to find that $\psi = \deg(L)$ for $L \in \picrel_{g,n}^0(\mathbb{R})$ defined by
\begin{equation} \label{changebasis}
L:=  \sum_{(i,S)}  \left(\alpha_{i,S} + \sum_{j \in S} \frac{2i+1-2g}{2g-2} x_j + \sum_{j \in S^c} \frac{1-2i}{2g-2} x_j \right) \cdot C_{i,S}^- + \sum_{j=1}^n \frac{x_j}{2g-2} \cdot T_j,
\end{equation}
 then compute $\psi(\Gamma)$ as the bidegree of $L$ on a curve whose dual graph is $\Gamma$, and finally determine $\phi(\Gamma)$ as $\psi(\Gamma) + \phi^d_{\text{can}}(\Gamma)$. 

Using Formula~\eqref{explicitformula}, we deduce that the stability subspace  $H(\Gamma, \Gamma_0, t)$ corresponds to the subspace $C_{g,n} \oplus W_D(2i-2+\alpha, S, t+1-i)$. 
\end{proof}

\begin{remark} When $\ell = 0$, the subspaces $W_D(\ell, S, k)$ of $D_{g,n} \cong \mathbb{R}^{n-\delta_{1,g}}\ni (x_{1 + \delta_{1,g}}, \ldots, x_n)$ are independent of $d$, and take the form
\[
W=W_D(\ell, S,k)= \left\{\sum_{j \in S} x_j = k \right\}.
\]
When $n \geq 1$ the collection $\{ W_D(\ell, S, 0) \}$ is known in the literature as the \emph{resonance hyperplane arrangement}, see \cite{cjm} and \cite{ssv}. For $n \geq 1$ and for all $d \in \mathbb{Z}$, the hyperplane arrangement of $D_{g,n}$ described by Equation~\eqref{otherwalls} is therefore a refinement of the integer translates of the resonance hyperplane arrangement. (These two arrangements coincide when $g=1$).\label{pubdescription0}
\end{remark}

\begin{remark}
For $g \geq 1$ and for any $d \in \mathbb{Z}$, each translation $x_i \mapsto x_i + (2g-2+ \delta_{1,g})$ respects the collection of stability hyperplanes~\eqref{otherwalls} of $D_{g,n}$. The same is true of each translation by $(0, \ldots, \pm 1, \ldots, \mp 1 , \ldots)$. A fundamental domain for these translations is any $(2g-2+\delta_{1,g})\times 1 \ldots \times 1$ top-dimensional hyperrectangle in $D_{g,n}$, so the hyperplane arrangement~\eqref{otherwalls} naturally defines a hyperplane arrangement of the torus obtained by identifying the opposite faces of the hyperrectangle. This fact will play a role in the next section in Lemma~\ref{fundamentaldomain}. \label{pubdescription1}
\end{remark}

\begin{remark} In studying the stability polytope decomposition of $V_{g,n}^d$ for all $d \in \mathbb{Z}$, it is enough to analyze the cases $d=0,\ldots, g-1$. Indeed, tensoring with $\omega_{\pi}$ and possibly mapping $L \mapsto L^{-1}$ gives isomorphisms $\picrel_{g,n}^d(\mathbb{Z}) \to \picrel_{g,n}^{2g-2\pm d}(\mathbb{Z})$.

Furthermore, we observe that for any $(d_1, \ldots, d_n) \in \mathbb{Z}^n$, the affine endomorphism $V_{g,n}^{e} \to V_{g,n}^{e+ \sum d_j}$ defined by $\phi \mapsto \phi + \deg(d_1 \Sigma_1 + \ldots + d_n \Sigma_n)$ respects the subspaces~\eqref{wallsct} and \eqref{otherwalls}. This fact will play an important role in Lemma~\ref{key}. \label{pubdescription2}
\end{remark}

In light of Remarks~\ref{pubdescription1} and \ref{pubdescription2}, in Figures~\ref{Figure1} and \ref{Figure2} we give a picture of the spaces $D_{g,n}$ and of their stability decompositions when $\dim(D_{g,n}) = 2$ and $g \leq 3$.

\begin{figure}[ht]
    \centering
    \includegraphics[scale=0.3]{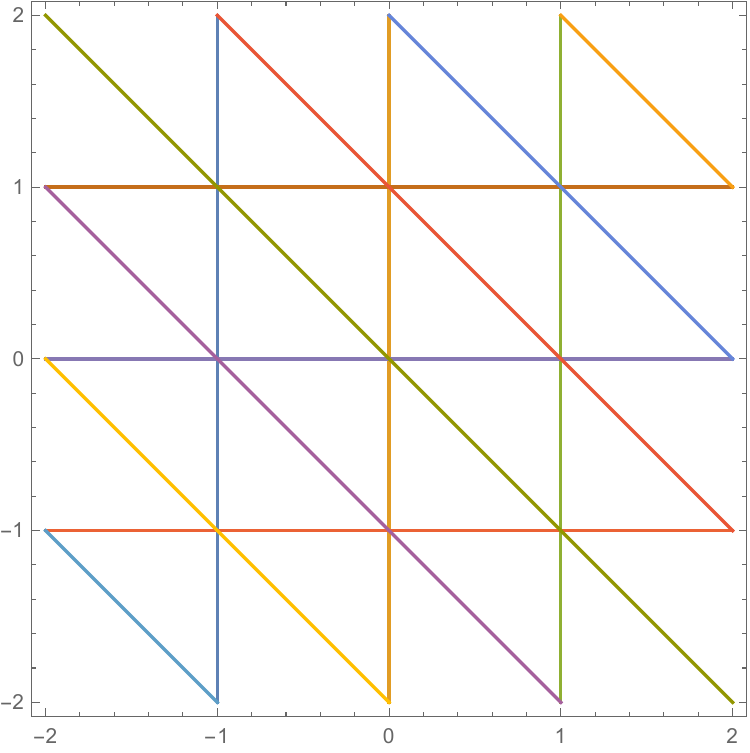} \
      \includegraphics[scale=0.3]{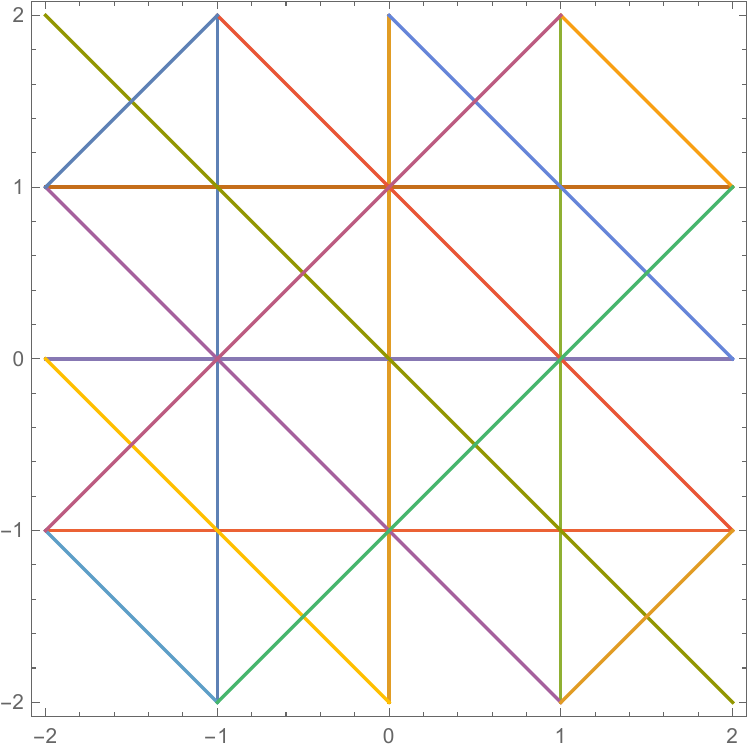} \
        \includegraphics[scale=0.3]{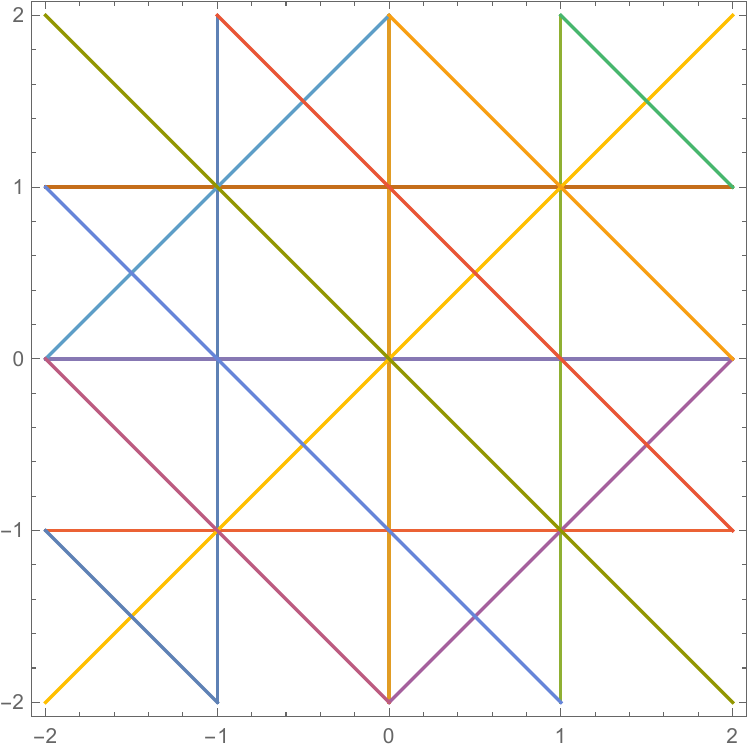}
    \caption{The stability space $D_{1,3}$ (any $d$) and the spaces $D_{2,2}$ in degrees $d=0,1$ respectively.}
    \label{Figure1}
\end{figure}

\begin{figure}[ht]
    \centering
    \includegraphics[scale=0.3]{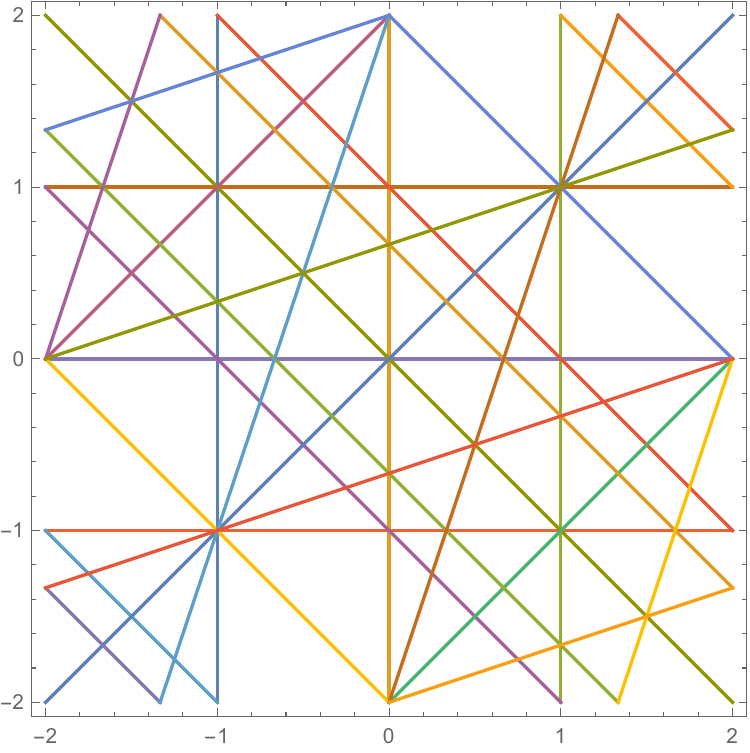} \
      \includegraphics[scale=0.3]{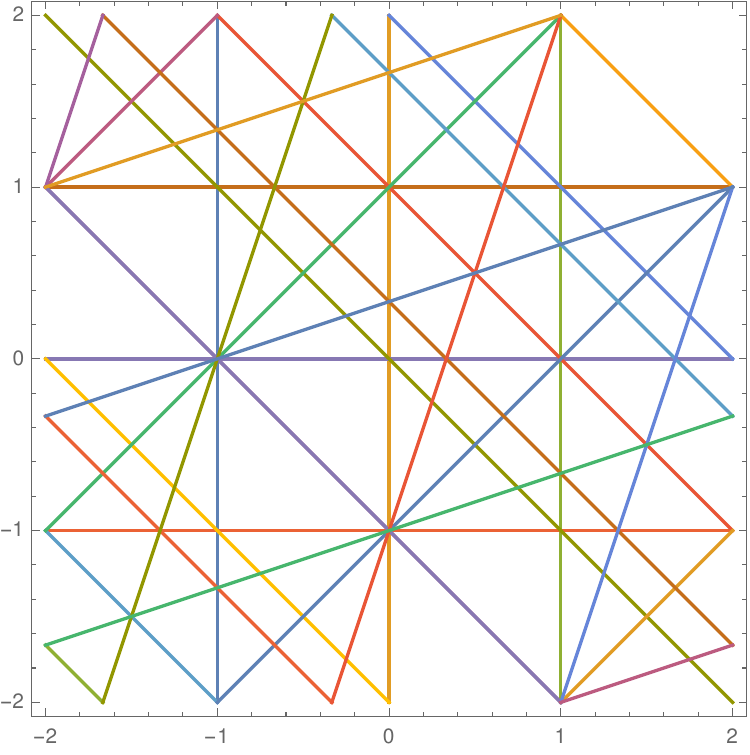} \
        \includegraphics[scale=0.3]{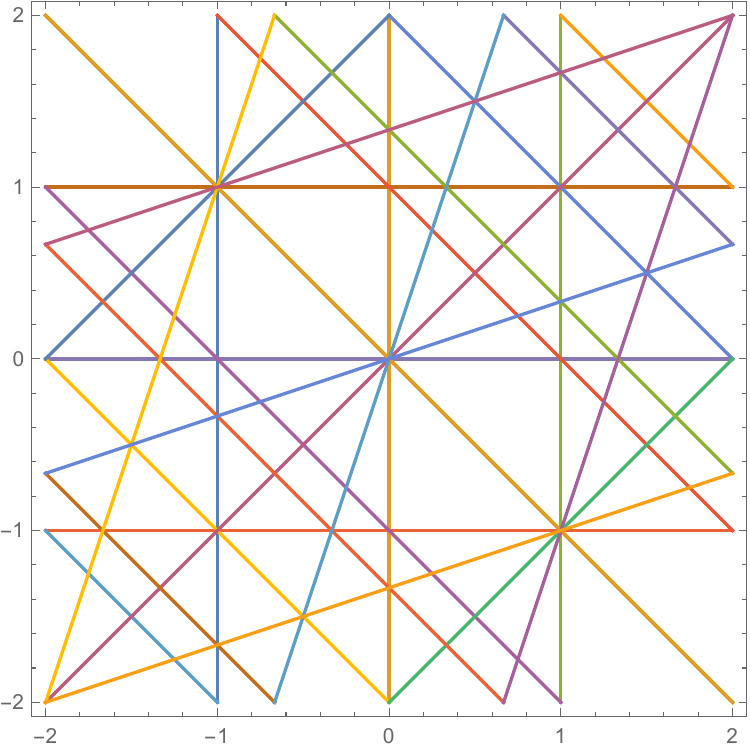}
    \caption{The stability spaces $D_{3,2}$ in degrees $d=0,1,2$ respectively.}
    \label{Figure2}
\end{figure}

Some important concluding remarks are in order.

\begin{remark} \label{nomarkedpoints} Here we analyze when the stability subspaces of $V_{g,n}^d$ are not hyperplanes. Applying Theorem~\ref{stabilityspace}, all stability subspaces $H(\Gamma, \Gamma_0, t)$ coincide with some $W_C(i, S, k)$ as in \eqref{wallsct} or with some $W_D(\ell, S, k)$ as in \eqref{otherwalls}. As we have already observed, the former are always hyperplanes. Moreover, from the expression in \eqref{otherwalls} it follows immediately that when $n \geq 1$ the subspaces $W_D(\ell, S, k)$ are also hyperplanes.

When $n=0$ we have $D_{g,0}^d \cong \{0 \}$, so $W_D(\ell, S, k)$ is a hyperplane if and only if it is empty, and from Equation~\eqref{otherwalls} this occurs if and only if
\begin{equation} \label{degeneratewall}
\frac{\ell (d+1-g)}{2g-2}  \notin \mathbb{Z}.
\end{equation}

One can further ask when $V_{g,n}^d$ contains nondegenerate elements. By the above analysis, this is always the case when $n \geq 1$. When $n=0$, this happens if and only if $W_D(\ell, S, k) = \emptyset$, which happens if and only if
\begin{equation} \label{OSnotfine}
\quad \frac{\ell (d+1-g)}{2g-2}  \notin \mathbb{Z} \text{ for all } 0 < \ell \leq g-1.
\end{equation}
It is elementary to check that Condition~\eqref{OSnotfine} is equivalent to the condition that $d+1-g$ and $2g-2$ have no nontrivial common divisors. In other words, the condition
\begin{equation} \label{caporasonotfine}
\gcd(d+1-g, 2g-2) =1.
\end{equation}
\end{remark}

\begin{remark} \label{phicandegenerate} Here we analyze when $\phi_{\text{can}}^d$ belongs to a stability subspace.

We begin by observing that this does not depend on the number $n$ of marked points. As we proved in Remark~\ref{nomarkedpoints}, the projection of $\phi_{\text{can}}^d$ to $D_{g,n}$ does not belong to a stability subspace if and only if Condition~\eqref{OSnotfine} is satisfied. The projection of $\phi_{\text{can}}^d$ to $C_{g,n}$  belongs to a stability subspace if and only if $d(2i-1)$ is an odd multiple of $g-1$ for some $0 \leq i \leq g$.

  We conclude that the canonical parameter $\phi_{\text{can}}^d$ is in the interior of a stability polytope in $V_{g,n}^d$ if and only if Equation~\eqref{caporasonotfine} is satisfied. 
\end{remark}

\begin{remark}	\label{Remark: RelationWithCaporasoPand}
	 The stacks $\Jb_{g,n}(\phi)$ are related to the compactified universal Jacobians constructed by  Caporaso \cite{caporaso, caporaso08a}, Melo \cite{melo09}, and Pandharipande \cite{panda}.  In \cite{melo09}, extending earlier work of Caporaso \cite[Theorem~5.9]{caporaso08a}, Melo proved that a certain rigidified quotient stack $\overline{\mathcal{P}}^{d}_{g}$, constructed in \cite{caporaso08a}, is an Artin stack that extends $\J_{g,n}^{d}$ \cite[Theorem~3.1, Proposition~4.2]{melo09}.  Work of Esteves--Pacini \cite{espac} shows that $\overline{\mathcal{P}}^{d}_{g}$ is isomorphic to $\Jb_{g,0}(\phi_{\text{can}}^{d})$.  This is not stated in the paper, but the context is essentially \cite[Theorem~6.3]{espac}.
	
	 The isomorphism $\overline{\mathcal{P}}^{d}_{g} \cong \Jb_{g,0}(\phi_{\text{can}}^{d})$ is constructed as follows.  The stack $\overline{\mathcal{P}}^{d}_{g}$ is the rigidification of the stack parameterizing balanced line bundles on quasistable curves. (See \cite[Definition~1, Section~1.1.3]{melo09} for exact definitions). An essential feature of these definitions is that they the imply \cite[Proposition~5.4]{espac}.  That proposition states that the direct image of a family of balanced line bundles under the stabilization map $C \to C^{\text{st}}$ is a family of rank $1$, torsion-free sheaves, and the formation of this direct image commutes with base change.  A computation of stability conditions shows that the direct image is a family of $\phi^{d}_{\text{can}}$-semistable sheaves \cite[Proposition~6.2]{espac}.  We conclude that taking the direct image defines a morphism $\overline{\mathcal{P}}^{d}_{g} \cong \Jb_{g,0}(\phi^{d}_{\text{can}})$, and a pseudoinverse is defined using \cite[Proposition~5.5]{espac}.

We can also apply the argument from the previous paragraph when $n>0$, and then it produces a morphism $\Jb_{g,n}(\phi^{d}_{\text{can}}) \to \overline{\mathcal{P}}^{d}_{g}$.  Indeed, the essential point is that every $\phi^d_{\text{can}}$-semistable sheaf is admissible (in the sense of  \cite[Section~3]{espac}) with respect to $C \to C^{\text{st}}$  (i.e.~the total degree of a $\phi^d_{\text{can}}$-semistable sheaf on a rational chain is always $-1$, $0$, or $+1$).  We conclude from loc.~cit.~that  the direct image of a family of $\phi^d_{\text{can}}$-semistable rank~$1$ torsion-free sheaves is a family of rank~$1$ torsion-free sheaves whose formation commutes with base change. For line bundles this is \cite[Theorem~3.1]{espac}, and the general case can be deduced from the line bundle case using \cite[Proposition~5.2]{espac}.  Finally, a computation of stability conditions shows that the direct image is $\phi^d_{\text{can}}$-semistable.

For some choices of $d$, the stability parameter $\phi^{d}_{\text{can}}$ will be degenerate, but we can then relate $\overline{\mathcal{P}}^{d}_{g}$ to a nondegenerate stability parameter.  If $\phi_{\epsilon}$ is nondegenerate and sufficiently close to $\phi^{d}_{\text{can}}$, then every $\phi_{\epsilon}$-stable sheaf is $\phi^{d}_{\text{can}}$-semistable, so  there is a tautological morphism $\Jb_{g, n}(\phi_{\epsilon}) \to \Jb_{g, n}(\phi^d_{\text{can}})$.  The stack $\Jb_{g, n}(\phi_{\epsilon})$ is Deligne--Mumford, and by composition, we get a morphism
\begin{equation} \label{Eqn: JbarToPand}
	\Jb_{g, n}(\phi_{\epsilon}) \to  \overline{\mathcal{P}}^{d}_{g}
\end{equation}
that lifts the forgetful morphism $\Mb_{g, n} \to \Mb_{g,0}$.

\end{remark}

\section{Applications}
\label{final}
In this section we apply our earlier results in two ways. We study the problem of extending to $\Mb_{g,n}$ the sections of the forgetful map $\J_{g,n}^d \to \Mm_{g,n}$, and the problem of finding different isomorphism classes of fine $\phi$-compactified universal Jacobians.

For fixed integers $(k; d_1, \ldots, d_n)$ satisfying $k (2-2g) + d_1 + \ldots + d_n  = d$, we define a natural map $\sigma_{k, \vec{d}} \colon \Mm_{g, n} \to \J_{g,n}^{d}$ by the rule
 \begin{equation} \label{Eqn: DefOfAbelSection}
	\sigma_{k, \vec{d}} \colon (C/S, \Sigma_1, \ldots, \Sigma_n) \mapsto   \omega_{\pi} ^{\otimes -k} \otimes \mathcal{O}_C( d_1 \Sigma_1 + \ldots +d_n \Sigma_n).
\end{equation}
This section is sometimes called an \emph{Abel--Jacobi section}. These are the only rational sections of the forgetful map $\J_{g,n}^d \to \Mm_{g,n}$ from the universal Jacobian of degree $d$ to the moduli space of smooth pointed curves, by the following result, which motivates Section~\ref{Section: AbelJacobi}.

\begin{fact} \label{franchetta} (Strong Franchetta conjecture.) Every rational section of the forgetful map $\J_{g,n}^d \to \Mm_{g,n}$ is of the form $\sigma_{k, \vec{d}}$ for some $k$ and $\vec{d}$ as above.  In particular, every rational section extends to a regular section $\Mm_{g,n} \to \J_{g,n}^d$.
\end{fact}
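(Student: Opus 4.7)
The plan is to identify rational sections with elements of the Picard group of the generic smooth curve, and then to apply the description of this Picard group coming from the results of Arbarello--Cornalba.

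More precisely, a rational section $s \colon \Mm_{g,n} \dashrightarrow \J_{g,n}^d$ defined on a dense open $U \subseteq \Mm_{g,n}$ determines, up to pullback from $U$, a line bundle of relative degree $d$ on the restriction of the universal curve to $U$. Restricting further to the generic fiber $C_\eta$ of $\pi$ (over the function field of $\Mm_{g,n}$) gives a well-defined class $L_\eta \in \Pic^d(C_\eta)$. This construction reduces the problem to describing $\Pic^d(C_\eta)$.

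For the core step, I would deduce this description from Fact~\ref{arbarellocornalba}, which identifies $\picrel_{g,n}(\mathbb{Z})$ as a free abelian group with generators the boundary components $\mathcal{O}(C^+_{i,S})$, the relative dualizing sheaf $\omega_\pi$, and the sections $\Sigma_1, \ldots, \Sigma_n$ (with the appropriate modifications in low genus). Upon restricting to $\Mm_{g,n} \subseteq \Mb_{g,n}$, the boundary generators $\mathcal{O}(C^+_{i,S})$ become trivial, so the relative Picard group of the universal smooth curve over $\Mm_{g,n}$ is freely generated by (the restrictions of) $\omega_\pi$ and the $\Sigma_j$. Further restricting to $C_\eta$ then yields an isomorphism onto $\Pic(C_\eta)$: surjectivity follows by spreading out an arbitrary line bundle on $C_\eta$ to a line bundle on the universal curve over a dense open and extending it via the Zariski closure of a defining Weil divisor (permissible since $\Mm_{g,n}$, and hence the universal curve over it, is smooth, so every Weil divisor is Cartier); injectivity follows from a standard see-saw argument (a line bundle that is trivial on the generic fiber is the pullback of its pushforward).

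Consequently, every degree $d$ class in $\Pic(C_\eta)$ is uniquely of the form $\omega_{C_\eta}^{\otimes -k} \otimes \mathcal{O}(d_1 p_1 + \ldots + d_n p_n)$ subject to $k(2-2g) + d_1 + \ldots + d_n = d$, and $s$ agrees on the dense open where it is defined with the regular section $\sigma_{k,\vec{d}} \colon \Mm_{g,n} \to \J_{g,n}^d$ from \eqref{Eqn: DefOfAbelSection}. Since $\J_{g,n}^d \to \Mm_{g,n}$ is separated and both sections coincide on a dense open of the integral base, they coincide everywhere, proving both assertions. The main technical obstacle is the identification of the relative Picard group with $\Pic(C_\eta)$; this is the strong Franchetta conjecture for $\Mm_{g,n}$, and has classical proofs analogous to those for $\Mm_g$ by Mestrano, Kouvidakis, and Schr\"oer, so in practice one could simply cite these references rather than reproducing the spreading-out/see-saw argument.
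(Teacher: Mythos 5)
Your route is essentially the paper's own: for $n \geq 1$ the paper deduces the statement from Fact~\ref{arbarellocornalba} (pointing to \cite[Section 4]{acpicard}), and for $n=0$ it cites Mestrano \cite{mestrano} and Kouvidakis \cite{kouvi}; your spreading-out/see-saw argument is a fleshed-out version of the first citation, and your fallback references are the second. For $n \geq 1$ the argument you sketch is correct: the marked section $\Sigma_1$ guarantees that a rational section of $\J^d_{g,n} \to \Mm_{g,n}$ really is induced by a line bundle on the universal curve over a dense open (a tautological bundle exists after rigidifying along $\Sigma_1$), the boundary generators of Fact~\ref{arbarellocornalba} die on the interior, your closure/see-saw steps identify the relative Picard group of the open universal curve with $\Pic(C_\eta)$ (fibers over $\Mm_{g,n}$ are irreducible, so a line bundle trivial on the generic fiber is a pullback), and separatedness of the rigidified target finishes the extension claim.

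The genuine gap is in treating the references as optional for $n=0$. There your very first step is not automatic: $\J^d_{g,0}$ is a $\mathbb{G}_m$-rigidified stack and the universal curve has no section, so a rational section of $\J^d_{g,0} \to \Mm_{g,0}$ only produces a rational point of the degree-$d$ Picard variety of the generic curve, not an honest line bundle on the generic fiber; the discrepancy between $\Pic^d(C_\eta)$ and such rational points is a Brauer obstruction. Consequently your spreading-out/see-saw identification of $\picrel$ of the open universal curve with $\Pic(C_\eta)$ only proves the weak Franchetta statement, while showing that every rational point of $\Pic^d$ of the generic curve is a multiple of the canonical class is exactly the content of the theorems of Mestrano and Kouvidakis that the paper invokes for $n=0$: for that case the citation is the substance of the proof, not a shortcut. (A smaller point: in low genus, e.g.\ $g=2$, $n=0$, every curve has nontrivial automorphisms, so ``the generic fiber over the function field'' must be interpreted at the level of the stack or after a suitable cover; this too is dealt with in the cited sources.)
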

\begin{proof}
When $n \geq 1$ this is a well-known consequence of Fact~\ref{arbarellocornalba}, see \cite[Section 4]{acpicard}. When $n=0$ this was proven  by Mestrano \cite{mestrano} and then by Kouvidakis \cite[Theorem 2]{kouvi}.
\end{proof}

Fact~\ref{franchetta} implies the following result on birational maps of universal Jacobians, which motivates Section~\ref{Section: different}.
\begin{corollary} \label{corfranchetta} Let $\alpha \colon \J_{g,n}^{e_1} \dashrightarrow \J_{g,n}^{e_2}$ be a birational map that commutes with the forgetful maps to $\Mm_{g,n}$. Then there exist \[(k; d_1, \ldots, d_n) \in \mathbb{Z}^{n+1} \textrm{ and } t \in \{0, 1\} \quad \text{with} \quad  k(2-2g)+ \sum d_j  = e_2- (-1)^t e_1,\] such that $\alpha$ is defined by the rule
\begin{equation} \label{bir}
 \alpha \colon L \mapsto L^{(-1)^t} \otimes  \omega_{C}^{\otimes -k} \otimes \mathcal{O}_C( d_1 \Sigma_1 + \ldots + d_n \Sigma_n).
\end{equation}

In particular, $\alpha$ is an isomorphism.
\end{corollary}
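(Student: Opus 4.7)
The plan is to reduce the statement to the Strong Franchetta conjecture (Fact~\ref{franchetta}) by a fiberwise analysis of $\alpha$.

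First I would restrict $\alpha$ to the fiber over a very general point $(C, p_1, \ldots, p_n) \in \Mm_{g,n}$, obtaining a birational map $\alpha_C \colon \J^{e_1}(C) \dashrightarrow \J^{e_2}(C)$ between two torsors under the abelian variety $\J^0(C)$. Because every rational map from a smooth variety into an abelian variety (or a torsor under one) extends to a morphism, and because the inverse rational map extends in the same way, $\alpha_C$ is in fact an isomorphism of varieties. After choosing basepoints in source and target, it decomposes uniquely as $\alpha_C(L) = \phi_C(L) \otimes M_C$, where $\phi_C$ is a group automorphism of $\J^0(C)$ and $M_C$ is a line bundle on $C$.

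The next step is to show that $\phi_C = \pm\operatorname{id}$, for which I would use the classical fact that for a very general smooth curve $C$ of genus $g \geq 1$ one has $\operatorname{End}(\J^0(C)) = \mathbb{Z}$, and hence $\operatorname{Aut}(\J^0(C)) = \{\pm \operatorname{id}\}$. The sign $(-1)^{t(C)}$ is locally constant in $C$ and so is constant on the irreducible base $\Mm_{g,n}$, yielding a single $t \in \{0,1\}$. (The genus $g=0$ case is trivial, since then the forgetful map $\J_{0,n}^d \to \Mm_{0,n}$ is an isomorphism.) Translating $M_C$ by the chosen basepoints, we obtain $\alpha_C(L) = L^{(-1)^t} \otimes M_C$ on a dense open subset of $\Mm_{g,n}$, and the degree of $M_C$ equals $e_2 - (-1)^t e_1$. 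The assignment $C \mapsto M_C$ is then a rational section of the forgetful map $\J_{g,n}^{e_2 - (-1)^t e_1} \to \Mm_{g,n}$, which by Fact~\ref{franchetta} must equal $\sigma_{k,\vec{d}}$ for some $(k; d_1, \ldots, d_n) \in \mathbb{Z}^{n+1}$ with $k(2-2g) + \sum d_j = e_2 - (-1)^t e_1$. Substituting yields exactly formula~\eqref{bir}. Finally, formula~\eqref{bir} defines an everywhere regular morphism, and the analogous formula with $(t, k, \vec{d})$ replaced by $(t, (-1)^t k, (-1)^t \vec{d})$ defines an everywhere regular two-sided inverse, so $\alpha$ is an isomorphism.

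The main obstacle will be justifying the input $\operatorname{Aut}(\J^0(C)) = \{\pm\operatorname{id}\}$ for a very general curve. One approach is a direct monodromy argument on the universal family proving $\operatorname{End}(\J^0(C)) = \mathbb{Z}$; alternatively one can combine Torelli's theorem (polarization-preserving automorphisms of $\J^0(C)$ arise from $\{\pm 1\} \rtimes \operatorname{Aut}(C)$) with the uniqueness of the principal polarization on a very general Jacobian to reduce to the statement that a very general $C$ has no non-trivial automorphisms.
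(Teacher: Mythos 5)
Your proposal is correct and is essentially the paper's own argument: both pass to the generic (equivalently, very general) fiber, extend the induced birational map to an isomorphism of the Jacobian (the paper argues via the absence of rational curves in abelian varieties, you via Weil extension), pin it down to $L \mapsto L^{\pm1}\otimes M_C$ using facts about very general Jacobians (the paper uses cyclicity of the N\'eron--Severi group plus Torelli, which is exactly your fallback route; your primary route $\End(J^0_C)=\mathbb{Z}$ is an equivalent standard input), and then invoke the Strong Franchetta conjecture (Fact~\ref{franchetta}) to identify $M_C$ with some $\sigma_{k,\vec d}$, the paper differing only in citing Caporaso for $n=0$ and first reducing to $e_1=e_2=0$. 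Two small slips to fix: for $g\le 2$ the very general curve does have a nontrivial automorphism (it acts as $-1$ on the Jacobian, so the conclusion $\phi_C=\pm\operatorname{id}$ still holds, but your stated reduction ``no nontrivial automorphisms'' needs that remark), and the two-sided inverse of \eqref{bir} has parameters $\bigl((-1)^{t+1}k,(-1)^{t+1}\vec d\bigr)$ rather than $\bigl((-1)^{t}k,(-1)^{t}\vec d\bigr)$.
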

\begin{proof}
The case $n=0$ is due to Caporaso, see \cite[Theorem 7.2]{bfv}. From now on in this proof we assume $n \geq 1$.

By applying a translation automorphism, it is enough to prove the claim when $e_1=e_2=0$, so $\alpha$ is a birational automorphism of $\J_{g,n}^0$ that commutes with the forgetful map.  In this case, consider the birational automorphism $\beta$ of the generic Jacobian $J_C^0$ that $\alpha$ induces.  Because the locus of indeterminacy is covered by rational curves and a Jacobian variety cannot contain any rational curve,  $\beta$ is in fact an automorphism of $J_C^0$. Furthermore, $\beta$ must preserve the principal polarization because the N\'eron--Severi group of $J^0_C$ is cyclic for a very general $C$ by \cite[Corollary 17.5.2]{bl} and because $\Theta_C$ is the unique generator of the  N\'eron--Severi group of $J^0_C$ that is ample.  We conclude using a version of the Torelli theorem \cite[Theorem 12.1]{milne} that implies that $\beta$ must lie in the group generated by translations and the involution $L\mapsto L^{-1}$.  By Fact~\ref{franchetta}, this group  is the group of automorphisms of the form \eqref{bir} with $k(2-2g) + \sum d_j=0$.  Since $\alpha$ coincides with an automorphism of the form \eqref{bir} on the generic fiber, it must be equal to that automorphism.
\end{proof}

In Corollary~\ref{corfranchetta} it is essential to assume that $\alpha$ commutes with the forgetful maps. The problem of characterizing arbitrary birational maps $\J_{g,n}^{e_1} \dashrightarrow \J_{g,n}^{e_2}$ is harder than the problem of classifying birational maps $\mathcal{M}_{g,n} \dashrightarrow \mathcal{M}_{g,n}$, and this second classification is not available, even for $g$ and $n$ large.

\subsection{Extensions of  Abel--Jacobi sections}

\label{Section: AbelJacobi}

Motivated by Fact~\ref{franchetta}, in this section we  fix integers $(k; d_1, \ldots, d_n)$, we let $d:= k(2-2g) + d_1 + \ldots +d_n$, and apply the earlier results of this paper to analyze extensions to $\Mb_{g,n}$ of the Abel--Jacobi section $\sigma=\sigma_{k, \vec{d}}$ defined by Rule~\eqref{Eqn: DefOfAbelSection}.  The main result is  Corollary~\ref{extensionAJ}, in which we characterize the nondegenerate $\phi$'s of the stability space $V_{g,n}^d$  such that the Abel--Jacobi section extends to a well-defined morphism $\sigma \colon \Mb_{g,n} \to \Jb_{g,n}(\phi)$.

Corollary~\ref{extensionAJ} follows from the more general Corollary~\ref{Cor: AbelMapIndeterminacy}, which describes the locus of indeterminacy of $\sigma \colon \Mb_{g,n} \dashrightarrow \Jb_{g,n}(\phi)$ as the closure of the locus of pointed curves $(C, p_1, \ldots, p_n)$ with $2$ smooth irreducible components meeting in at least $2$ nodes such that   $\omega_C^{\otimes -k} \otimes \mathcal{O}_C(d_1 p_1 + \ldots + d_n p_n)$ fails to be $\phi(C,p_1, \ldots, p_n)$-stable. To prove this result, we first observe that all nondegenerate $\phi \in V_{g,n}^d$ that have the same projection (under the isomorphism of Corollary~\ref{isoalphan}) to $D_{g,n}$ correspond to isomorphic moduli stacks $\Jb_{g,n}(\phi)$, so we can reduce to the case where $\omega_C^{\otimes -k} \otimes \mathcal{O}_C(d_1 p_1 + \ldots + d_n p_n)$ is $\phi$-stable on all curves with at most $1$ node. The proof that the indeterminacy locus is not smaller than the one that we claimed essentially follows from the fact that there exists a unique rank~$1$ torsion free sheaf that extends to $\Cb_{g,n}$ the restriction of $\omega_C^{\otimes -k} \otimes \mathcal{O}_C(d_1 p_1 + \ldots + d_n p_n)$ to $\Cb_{g,n}^{\leq 1}$, defined as the universal curve over $\Mb_{g,n}^{\leq 1}$ (the moduli stack of stable curves with at most $1$ node).  The problem of resolving the indeterminacy of the Abel--Jacobi sections  was raised  by Grushevsky--Zakharov in \cite{grushevsky}, and in Remark~\ref{Remark: RelationWIthGrushevsky}, we discuss how that work relates to the present paper.

In analyzing the locus of indeterminacy, the following line bundles on the universal curve play a fundamental role.

\begin{definition} \label{defOD}
Let $\OD$ be the line bundle on the universal curve $\Cb_{g,n}$ defined by
\[
\OD:= \omega_{\pi} ^{\otimes -k} \otimes \mathcal{O}_C( d_1 \Sigma_1 + \ldots + d_n \Sigma_n),
\]
and let $\phi_{k, \vec{d}} \in V^{d}_{g, n}$ be its multidegree:
	\begin{equation} \label{phid}
		\phi_{k,\vec{d}}(\Gamma(i,S)) := (d_S +(2i-1) k, d- d_S - (2i-1) k), \quad \phi_{k, \vec{d}}(\Gamma_j):= (d_j, d-d_j).
	\end{equation}
(By Corollary~\ref{isoalphan}, the Equations~\eqref{phid} define a unique element of $V_{g,n}^d$).

For $\phi \in V_{g,n}^d$ nondegenerate, we define the following modification of $\OD$ (slightly generalizing what we did for $k=0$  in \cite[Section 5]{kasspa}):
	\[
		\ODF := \OD \otimes \mathcal{O}_C \left(\sum_{(i, S)} \left( -d_{S} - k (2 i -1) + \left\lfloor \phi(\Gamma(i, S))(v) + \frac{1}{2} \right\rfloor \right) \cdot {C}^-_{i, S} \right),
	\]
where $v$ is the first vertex of $\Gamma(i,S)$ according to the convention we fixed in Section~\ref{notationmoduli}, and $C^-_{i,S}$ the choice of a component that we fixed in Section~\ref{Picard}.

\end{definition}
The line bundle $\ODF$ is defined so that its restriction to smooth pointed curves equals the restriction of $\OD$, and its restriction to stable curves with at most $1$ node is $\phi$-stable. Stability follows from the following proposition, which describes the properties of $\ODF$ that we will use next.

\begin{proposition} \label{Prop: StabilityVectorForAbelMap}
The line bundle $\ODF$ satisfies the following.
\begin{enumerate}
\item The restriction $\ODF|_{(C,p_i)}$ to a stable pointed curve $(C,p_i)$ with $1$ node  is  $\phi(\Gamma_{(C,p_i)})$-stable.
\item The restriction of $\ODF$ to a stable pointed curve with $2$ smooth components and at least $2$ nodes equals the restriction of $\OD$.
\end{enumerate}
\end{proposition}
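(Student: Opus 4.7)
The plan is to analyze the modification divisor
$\ODF \otimes \OD^{-1} = \mathcal{O}_{\Cb}\bigl(\sum_{(i,S)} n_{i,S}\, C^-_{i,S}\bigr)$,
where $n_{i,S} := -d_S - k(2i-1) + \lfloor \phi(\Gamma(i,S))(v) + 1/2 \rfloor$, by restricting it to the curve $C$ in question. The key observation is that each $C^-_{i,S}$ is contained in $\pi^{-1}(\Delta(i,S))$, so $\mathcal{O}_{\Cb}(C^-_{i,S})|_C$ is canonically trivial (the defining local equation of $C^-_{i,S}$ pulls back to a nowhere-vanishing section) whenever the image of $C$ in $\Mb_{g,n}$ does not lie on $\Delta(i,S)$. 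Both claims then reduce to identifying which summands can be nontrivial and computing multidegrees in those cases.

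For part (2), I observe that a stable pointed curve $C$ with two smooth components joined by $\alpha \geq 2$ nodes has no separating node: removing any single node leaves the two components connected through the remaining $\alpha - 1$. Hence $[C] \notin \Delta(i',S')$ for every $(i',S')$, and the triviality observation above applies uniformly to every summand of the modification, giving $\ODF|_C = \OD|_C$.

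For part (1), I split the argument by whether the unique node of $C$ is non-separating or separating. If it is non-separating, the dual graph is a single vertex of genus $g-1$ carrying one loop; then $[C]$ lies on no $\Delta(i',S')$, so the same disjointness argument gives $\ODF|_C = \OD|_C$, and since this one-vertex graph has no proper nonempty subgraph, the inequality in Definition~\ref{phistab} is vacuous and every line bundle is automatically $\phi$-stable. If the node is separating, the dual graph equals $\Gamma(i,S)$ for a unique pair $(i,S)$ and $[C]$ lies only on $\Delta(i,S)$, so only the single summand $n_{i,S}\, C^-_{i,S}$ contributes nontrivially. Using that $\mathcal{O}(C^+_{i,S}+C^-_{i,S}) = \pi^*\mathcal{O}(\Delta(i,S))$ has trivial multidegree on every fiber, I read off $\deg \mathcal{O}(C^-_{i,S})|_{\Gamma(i,S)} = (+1,-1)$ by negating Equation~\eqref{one} of Lemma~\ref{degreecalculation}. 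Combined with $\deg \OD|_{\Gamma(i,S)} = (d_S+k(2i-1),\, d-d_S-k(2i-1))$ taken from Equation~\eqref{phid}, substituting the definition of $n_{i,S}$ yields
\[
\deg_{v_1} \ODF|_C = \left\lfloor \phi(\Gamma(i,S))(v_1) + \tfrac{1}{2} \right\rfloor,
\]
with the complementary value $d$ minus this on $v_2$. Nondegeneracy of $\phi$ forbids $\phi(\Gamma(i,S))(v_1) \in \mathbb{Z} + \tfrac{1}{2}$, so this floor is the unique integer $m$ with $|m - \phi(v_1)| < \tfrac{1}{2}$, which is exactly the condition of Definition~\ref{phistab} applied to $\Gamma_0 = \{v_1\}$ (with one edge and $\delta = 0$ for a line bundle). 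Hence $\ODF|_C$ is $\phi$-stable.

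The main bookkeeping burden lies in the separating case of part (1): the integer $n_{i,S}$ was engineered in Definition~\ref{defOD} precisely so the $\OD$-contribution cancels and the floor value emerges. The only truly geometric input is the disjointness of $C^-_{i',S'}$ from $C$ whenever $[C] \notin \Delta(i',S')$, which is immediate from the support of $C^-_{i',S'}$, and the bidegree computation for $\mathcal{O}(C^-_{i,S})$ on $\Gamma(i,S)$, which follows directly from Lemma~\ref{degreecalculation}.
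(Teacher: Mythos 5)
Your proposal is correct and follows essentially the same route as the paper's (much terser) proof: part (2) and the irreducible one‑nodal case come from the divisors $C^-_{i,S}$ being supported over $\Delta(i,S)$, hence restricting trivially to fibers over curves with no separating node, while the separating‑node case of part (1) is the bidegree computation showing the coefficient $n_{i,S}$ cancels the $\OD$ contribution and leaves the floor $\lfloor \phi(\Gamma(i,S))(v_1)+\tfrac12\rfloor$, which is $\phi$-stable by nondegeneracy. The only difference is that you spell out the details (including the vacuous stability on the one‑vertex dual graph) that the paper leaves implicit.
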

\begin{proof}
The proof of the first claim follows by computing the bidegrees of $\ODF$ on all stable pointed curves with $2$ components and $1$ separating node. The second claim follows from the fact that  the line bundles $\mathcal{O}_C({C}^-_{i, S})$ become trivial when restricted to curves with $2$ smooth irreducible components and at least $2$ nodes.
\end{proof}

We can now state and prove our first characterization of the indeterminacy locus of the Abel--Jacobi section $\sigma=\sigma_{k,\vec{d}}$.
\begin{proposition} \label{Prop: AbelMapIndeterminacy}
	Given a nondegenerate $\phi \in V_{g,n}^d$, the locus of indeterminacy of the rational map $\sigma \colon \Mb_{g, n} \dashrightarrow \Jb_{g, n}(\phi)$ defined by \eqref{Eqn: DefOfAbelSection} is the locus $Z(\phi)$ of stable curves $(C, p_1, \ldots, p_n)$  such that the restriction $\ODF|_{(C, p_i)}$ fails to be $\phi(\Gamma_{(C, p_i)})$-stable.
\end{proposition}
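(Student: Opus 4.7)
The plan is to establish both inclusions of the claimed equality between the locus of indeterminacy of $\sigma = \sigma_{k, \vec d}$ and $Z(\phi)$.

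The inclusion $\mathrm{Ind}(\sigma) \subseteq Z(\phi)$ is the easier direction. I would use the line bundle $\ODF$ from Definition~\ref{defOD} directly: by the very definition of $Z(\phi)$, its restriction to every stable pointed curve outside $Z(\phi)$ is $\phi$-stable, so $\ODF$ defines a family of $\phi$-stable rank~$1$ torsion-free sheaves on $\pi^{-1}(\Mb_{g,n} \setminus Z(\phi))$ and hence a morphism $\Mb_{g,n} \setminus Z(\phi) \to \Jb_{g,n}(\phi)$. Since $\ODF$ agrees with $\OD$ on the smooth locus $\Mm_{g,n}$, this morphism extends $\sigma$, and therefore the indeterminacy locus is contained in $Z(\phi)$.

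For the reverse inclusion I would argue by contradiction. Suppose $\sigma$ admits a regular extension $f \colon U \to \Jb_{g,n}(\phi)$ on an open neighborhood $U$ of a point $x = (C, p_1, \ldots, p_n) \in Z(\phi)$. By Proposition~\ref{Prop: StabilityVectorForAbelMap}(1), the line bundle $\ODF$ also defines an extension of $\sigma$ over $\Mb_{g,n}^{\leq 1}$. The separatedness of $\Jb_{g,n}(\phi) \to \Mb_{g,n}$ (Corollary~\ref{Cor: JbExists}) forces $f$ to coincide with the $\ODF$-induced extension on $U \cap \Mb_{g,n}^{\leq 1}$, which is dense in $U$ because $Z(\phi) \subseteq \Mb_{g,n} \setminus \Mb_{g,n}^{\leq 1}$ has codimension $\geq 2$ in $\Mb_{g,n}$. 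Passing to an étale cover of $U$, I would represent $f$ by a rank~$1$ torsion-free family $\mathcal F$ on $\pi^{-1}(U)$; the comparison just made shows $\mathcal F \cong \ODF \otimes \pi^* \mathcal M$ on $\pi^{-1}(U \cap \Mb_{g,n}^{\leq 1})$ for some line bundle $\mathcal M$ on $U \cap \Mb_{g,n}^{\leq 1}$. Since $\Cb_{g,n}$ is a smooth stack and the complement of $\pi^{-1}(U \cap \Mb_{g,n}^{\leq 1})$ in $\pi^{-1}(U)$ has codimension $\geq 2$, purity for line bundles (applied to $\mathcal F^{\vee\vee}$) should propagate this identification to all of $\pi^{-1}(U)$. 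Restricting to the fiber over $x$ would then yield $\mathcal F|_C \cong \ODF|_C$, contradicting that $\mathcal F|_C$ is $\phi$-stable while $\ODF|_C$ is not.

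The main obstacle will be handling the possibility that $\mathcal F$ has a non-locally-free fiber at $x$: in that case $\mathcal F$ is not reflexive on $\pi^{-1}(U)$ and the purity argument above applies only to $\mathcal F^{\vee\vee}$. One then has to compare the multidegrees of $\mathcal F|_C$ and $\mathcal F^{\vee\vee}|_C \cong \ODF|_C$, the difference being controlled by the support of the codimension-$2$ quotient $\mathcal F^{\vee\vee}/\mathcal F$ at the nodes of $C$. The final step would be to use the explicit formulas~\eqref{phid} for $\phi_{k,\vec d}$ together with Corollary~\ref{isoalphan} to check that no such modification at nodes can turn an unstable multidegree into a $\phi$-stable one while remaining a valid limit of $\OD$, thereby closing the contradiction.
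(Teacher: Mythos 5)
Your easy direction (extending $\sigma$ off $Z(\phi)$ by the rule $(C,p_i)\mapsto \ODF|_{(C,p_i)}$) is exactly the paper's argument, and your architecture for the converse --- identify a hypothetical regular extension with the $\ODF$-family away from the codimension-$\ge 2$ locus of curves with at least two nodes, then propagate the identification to the central fiber --- is also the paper's. The gap is in the propagation step. You treat the possibility that the representing family $\mathcal{F}$ is non-reflexive as a genuine case and propose to dispose of it by a combinatorial comparison of multidegrees (``no modification at the nodes can turn an unstable multidegree into a $\phi$-stable one while remaining a valid limit of $\OD$''). That fallback is not a proof and, as stated, cannot become one: modifications at the nodes certainly can convert an unstable multidegree into a $\phi$-stable one --- this is precisely what the boundary twists in the definition of $\ODF$ accomplish in codimension one, and in Example~\ref{exampledollarsign} shifting a bidegree by $1$ moves it across a stability wall. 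What rules out a $\phi$-stable sheaf different from $\ODF|_{(C,p_i)}$ as the fiber of $\mathcal{F}$ at $x$ is not any constraint on multidegrees but the requirement that such a sheaf fit into a family, flat over the whole neighborhood, agreeing with $\ODF$ in codimension one; pinning down which sheaves are ``valid limits'' is exactly the content of the step you leave open. Your intermediate appeal to ``purity for line bundles applied to $\mathcal{F}^{\vee\vee}$'' also does not suffice, since $\mathcal{F}^{\vee\vee}$ need not be locally free along the nodes.

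The paper closes precisely this point with its appendix: a family of rank~$1$ torsion-free sheaves, flat over a regular base, on a total space satisfying $G_1$ and $S_2$ (here an open substack of the smooth stack $\Cb_{g,n}$) is automatically reflexive (Lemma~\ref{Lemma: CommAlgebra}), so an isomorphism of two such families off a closed substack of codimension $\ge 2$ extends (Corollary~\ref{Cor: determinedcod2}). Hence the ``non-reflexive case'' you worry about simply does not occur, and the identification of $\mathcal{F}$ with $\ODF$ (up to a twist pulled back from the base) holds over the whole neighborhood, yielding the contradiction directly. If you replace your fallback by an appeal to these two results --- and either pull back the tautological sheaf $\Tau(\phi)$ along the section as the paper does for $n\ge 1$, or work \'etale-locally as you suggest (which also covers $n=0$, a case the paper instead handles separately since then either no nondegenerate $\phi$ exists in the relevant degree or $Z(\phi)=\emptyset$) --- your argument becomes essentially the paper's proof.
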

\begin{proof}
Define $U(\phi)$ to be the locus of determinacy of $\sigma$ (i.e.~the locus  where $\sigma$ is a well-defined morphism to $\Jb_{g,n}(\phi)$). Extending $\sigma$ by the rule
  \[(C,p_i) \mapsto \ODF|_{(C, p_i)},\] we deduce the inclusion  \begin{equation}  \Mb_{g,n} \setminus {Z}(\phi) \subseteq U(\phi).\label{inclusion} \end{equation} To conclude, we need to show that \eqref{inclusion} is an equality.


We first consider the case $n=0$. When $g>2$ and $n=0$ there is no nondegenerate $\phi$ in degree $d$ for $d$ the total degree of $\ODF$. Indeed, when $n=0$ we have that $d$ is a multiple of $2g-2$, and with the hypothesis that $g>2$, we deduce $\gcd(d+1-g, 2g-2) >1$. In Remark~\ref{phicandegenerate} we observed that there is no nondegenerate $\phi$ in $V_{g,0}^d$ when $\gcd(d+1-g, 2g-2) >1$.  The special case when $g=2$ and $n=0$ is easily dealt with by observing that $Z(\phi) = \emptyset$, so that the reverse inclusion of \eqref{inclusion} holds trivially.

From now on we assume $n \geq 1$. Under this assumption, we know by \cite[Lemma 3.35]{kasspa} that there exists a tautological sheaf $F_{\text{tau}}(\phi)$ on $\Jb_{g,n}(\phi) \times_{\Mb_{g,n}} \Cb_{g,n}$. Denote by $\tilde{\sigma}$ the rational map $\Cb_{g,n} \dashrightarrow \Jb_{g,n}(\phi) \times_{\Mb_{g,n}} \Cb_{g,n}$ obtained by pulling back $\sigma$.  

By the definition of a tautological sheaf and of $\tilde{\sigma}$, we have an isomorphism of line bundles \begin{equation} \label{equals} \tilde{\sigma}^*|_{\pi^{-1}\left(\Mb_{g,n} \setminus {Z}(\phi)\right)} ( F_{\text{tau}}(\phi)) \cong \ODF|_{\pi^{-1}\left(\Mb_{g,n} \setminus {Z}(\phi)\right)}.\end{equation} By the first part of Proposition~\ref{Prop: StabilityVectorForAbelMap}, the complement of $\pi^{-1}\left(\Mb_{g,n} \setminus {Z}(\phi)\right)$ has codimension at least $2$ in $\Cb_{g,n}$, hence so does $\pi^{-1}\left(U(\phi)\right)$ by Inclusion~\eqref{inclusion}. We conclude that the isomorphism in \eqref{equals} extends over $U(\phi)$ using Corollary~\ref{Cor: determinedcod2} (which shows quite generally that, on a smooth Deligne--Mumford stack, an isomorphism of families of rank~$1$ torsion-free sheaves extends over a codimension $2$ locus). 

 We are now ready to prove the reverse inclusion of \eqref{inclusion}. Assume $(C,p_1, \ldots, p_n) \in U(\phi)$. By restricting Isomorphism~\eqref{equals} to $\pi^{-1}(C,p_1, \ldots, p_n)$ we obtain \begin{equation}  \label{equals2} \tilde{\sigma}^*|_{\pi^{-1}(C, p_1, \ldots, p_n)} ( F_{\text{tau}}(\phi)) \cong \ODF|_{\pi^{-1}(C, p_1, \ldots, p_n)}.\end{equation}  Because the left-hand side of \eqref{equals2} is  $\phi(\Gamma_{(C, p_i)})$-stable (by definition of a tautological sheaf), so is the right-hand side. We conclude that $(C, p_1, \ldots, p_n) \notin  Z(\phi)$, and this concludes our proof that $\Mb_{g,n} \setminus {Z}(\phi) = U(\phi)$.
\end{proof}


While $\phi$-stability of $\OD$ gives a \emph{sufficient} condition for the corresponding Abel--Jacobi section to extend to a regular section $\Mb_{g,n}\to \Jb_{g,n}(\phi)$, Proposition~\ref{Prop: AbelMapIndeterminacy} implies that $\phi$-stability of $\ODF$ is an \emph{equivalent} condition for the same extension problem.

We now rewrite the statement of Proposition~\ref{Prop: AbelMapIndeterminacy} purely in terms of the original line bundle $\OD$. 
\begin{corollary} \label{Cor: AbelMapIndeterminacy} Given a nondegenerate $\phi \in V_{g,n}^d$, the locus of indeterminacy of the rational map $\sigma \colon \Mb_{g, n} \dashrightarrow \Jb_{g, n}(\phi)$ defined by \eqref{Eqn: DefOfAbelSection} is the closure of the locus $T(\phi)$ of stable curves $(C, p_1, \ldots, p_n)$ with $2$ smooth irreducible components and at least $2$ nodes, such that  $\OD|_{(C, p_i)}$  fails to be  $\phi(\Gamma_{(C, p_i)})$-stable.
\end{corollary}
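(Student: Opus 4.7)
My plan is to combine Proposition~\ref{Prop: AbelMapIndeterminacy}, which identifies the indeterminacy locus with $Z(\phi) = \{(C,p_i) : \ODF|_{(C,p_i)} \text{ is not } \phi(\Gamma_{(C,p_i)})\text{-stable}\}$, with Proposition~\ref{Prop: StabilityVectorForAbelMap} to prove the equality $\overline{T(\phi)} = Z(\phi)$. One inclusion is essentially built into the construction of $\ODF$: on any curve $(C,p_i)$ with two smooth components meeting in at least two nodes Proposition~\ref{Prop: StabilityVectorForAbelMap}(2) gives $\ODF|_{(C,p_i)} = \OD|_{(C,p_i)}$, so $T(\phi) \subseteq Z(\phi)$. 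Moreover $Z(\phi)$ is closed in $\Mb_{g,n}$: smoothing a node contracts the dual graph from $\Gamma$ to $\Gamma'$, elementary subgraphs of $\Gamma'$ pull back to elementary subgraphs of $\Gamma$ with the same cross-cut, and because $\phi$ is compatible with contractions and the multidegree of $\ODF$ is locally constant in families, any failure of $\phi$-stability on a general fiber propagates to the special fiber. Hence $\overline{T(\phi)} \subseteq Z(\phi)$.

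For the reverse inclusion, pick $(C,p_i) \in Z(\phi)$ with dual graph $\Gamma$. By Proposition~\ref{Prop: StabilityVectorForAbelMap}(1), $\Gamma$ has at least two edges, and the failure of $\phi$-stability produces an elementary subgraph $\emptyset \subsetneq \Gamma_0 \subsetneq \Gamma$ with
\[
\left|\deg_{\Gamma_0}(\ODF|_C) - \sum_{v \in \operatorname{Vert}(\Gamma_0)}\phi(\Gamma)(v)\right| > \alpha/2,
\]
where $\alpha = \#(\Gamma_0 \cap \Gamma_0^c)$ and strictness comes from nondegeneracy of $\phi$. I first claim that $\alpha \geq 2$. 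Otherwise the contraction of $\Gamma$ that collapses every edge other than the unique cross-edge produces a graph $\Gamma(i,S)$, and Proposition~\ref{Prop: StabilityVectorForAbelMap}(1) applied to any one-node curve $(C',p_i) \in \mathcal{M}_{\Gamma(i,S)}$ specializing to $(C,p_i)$ would yield a stability inequality that, when transported back to $C$ via the two identities displayed below, contradicts the above strict inequality.

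With $\alpha \geq 2$ in hand, let $\Gamma'$ be the loopless two-vertex stable graph with vertices $v_1', v_2'$ and $\alpha$ edges obtained from $\Gamma$ by contracting every edge internal to $\Gamma_0$ and every edge internal to $\Gamma_0^c$. The closure relation for the stratification by topological type gives $(C,p_i) \in \overline{\mathcal{M}_{\Gamma'}}$, so $(C,p_i)$ is a specialization of vine curves $(C',p_i) \in \mathcal{M}_{\Gamma'}$. Any such $C'$ satisfies $\OD|_{C'} = \ODF|_{C'}$ by Proposition~\ref{Prop: StabilityVectorForAbelMap}(2), and the two identities
\[
\deg_{v_1'}(\ODF|_{C'}) = \deg_{\Gamma_0}(\ODF|_C), \qquad \phi(\Gamma')(v_1') = \sum_{v \in \operatorname{Vert}(\Gamma_0)}\phi(\Gamma)(v),
\]
hold by local constancy of bidegree in the smoothing family and by the compatibility of $\phi$ with contractions (Definition~\ref{stabilityspacedef}). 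Hence the instability of $\ODF|_C$ for $\Gamma_0$ translates verbatim into instability of $\OD|_{C'}$ with respect to $\phi(\Gamma')$, placing $(C',p_i) \in T(\phi)$ and proving $(C,p_i) \in \overline{T(\phi)}$.

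The main technical point to handle with care is this ``degree-and-stability transport'' through a contraction $\Gamma \to \Gamma'$. It rests on the intersection-theoretic fact that the component-wise degree of $\ODF$ is locally constant in a one-parameter smoothing family, together with the bookkeeping observation that the stability bound $\alpha/2$ is preserved because contracting edges internal to $\Gamma_0$ or to $\Gamma_0^c$ does not affect the cross-cut $\Gamma_0 \cap \Gamma_0^c$. Both ingredients are standard once isolated, and the rest of the argument reduces to the chain of equalities above.
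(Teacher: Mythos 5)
Your proposal is correct and follows the paper's own strategy: reduce via Proposition~\ref{Prop: AbelMapIndeterminacy} to showing $Z(\phi)=\overline{T}(\phi)$, get $T(\phi)\subseteq Z(\phi)$ from part (2) of Proposition~\ref{Prop: StabilityVectorForAbelMap}, and use part (1) together with closedness of $Z(\phi)$ for the reverse containment. The only difference is one of detail: where the paper compresses the reverse inclusion into ``$Z(\phi)\subseteq\overline{\mathcal{T}}$ and $Z(\phi)$ is closed,'' you make explicit the contraction--transport of the destabilizing inequality (compatibility of $\phi$ and of multidegrees with contractions, plus the reduction to elementary subgraphs and the exclusion of $\alpha=1$ via part (1)), which is precisely the argument the paper's terse deduction implicitly relies on.
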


\begin{proof}
By applying Proposition~\ref{Prop: AbelMapIndeterminacy}, the claim is reduced to proving that $Z(\phi)$ equals the closure of ${T}(\phi)$ in $\Mb_{g,n}$.

Let $\mathcal{T}$ be the locally closed locus of $\Mb_{g,n}$ consisting of stable curves with $2$ smooth irreducible components and at least $2$ nodes. By the second part of Proposition~\ref{Prop: StabilityVectorForAbelMap}, we have that $Z(\phi) \cap \mathcal{T} = T(\phi)$. We deduce the equality $Z(\phi) = \overline{T}(\phi)$ from the fact that $Z(\phi) \subseteq \overline{\mathcal{T}}$ (by the first part of Proposition~\ref{Prop: StabilityVectorForAbelMap}) and from the fact that $Z(\phi)$ is closed (by its very definition). 
\end{proof}

We conclude by giving explicit characterizations of the nondegenerate parameters $\phi \in V_{g,n}^d$ such that $\OD$ is $\phi$-stable, and of the nondegenerate parameters $\phi$ such that the corresponding Abel--Jacobi section  extends to a regular section $\Mb_{g,n}\to \Jb_{g,n}(\phi)$. 

We begin by observing that the line bundle $\OD$ is $\phi$-stable for $\phi = \phi_{k, \vec{d}}$, simply because the latter stability parameter is by definition the multidegree of $\OD$. The parameter $\phi_{k, \vec{d}}$ is degenerate (so there is not an associated Deligne--Mumford stack $\Jb_{g,n}(\phi_{k, \vec{d}})$), but $\phi$-stability of $\OD$ is preserved within the polytope centered at $\phi_{k, \vec{d}}$ that we are now going to define.

\begin{definition} \label{polytopeQ} Define the polytope $\mathcal{Q}(\phi_{k, \vec{d}})$ in $V_{g,n}^d$ as the collection of all $\phi \in V_{g,n}^d$ that satisfy the inequalities 
\begin{equation} \label{Qdgn}
\left|\phi(\Gamma)(v) - \phi_{k, \vec{d}}(\Gamma)(v) \right| < \frac{\alpha}{2}
\end{equation}
for all loopless graphs $\Gamma$ with $2$ vertices $v$ and $w$ of genera $i$ and $g-\alpha+1-i$ respectively, joined by $\alpha$ edges and with the $S\subseteq [n]$ markings on the first vertex.
\end{definition}

 We remark that $\mathcal{Q}(\phi_{k, \vec{d}})$ is \emph{not} a stability polytope in the sense of Definition~\ref{Definition: PolytopesFamilyCurves}. We introduced the polytope $\mathcal{Q}(\phi_{k, \vec{d}})$ to formulate the following result.

\begin{corollary} \label{Cor: StabilityVectorForAbelMap}
	For $\phi\in V_{g,n}^d$ nondegenerate, the line bundle $\OD$ is $\phi$-stable if and only if $\phi$ belongs to the polytope ${\mathcal{Q}}(\phi_{k, \vec{d}})$ of Definition~\ref{polytopeQ}.
\end{corollary}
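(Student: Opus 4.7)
The plan is to unwind the definitions. Since $\OD$ is a line bundle with multidegree $\phi_{k,\vec{d}}$ (by Equation~\eqref{phid}), its failure-to-be-locally-free invariant $\delta_{\Gamma_0}(\OD)$ vanishes on every subgraph $\Gamma_0$ and $\deg_{\Gamma_0}(\OD) = \sum_{v \in \operatorname{Vert}(\Gamma_0)} \phi_{k,\vec{d}}(\Gamma)(v)$. Hence the $\phi$-stability condition of Definition~\ref{phistab} applied to $F = \OD$ on a stable curve with dual graph $\Gamma$ reads
\[
\left|\sum_{v \in \operatorname{Vert}(\Gamma_0)} \bigl(\phi_{k,\vec{d}}(\Gamma)(v) - \phi(\Gamma)(v)\bigr)\right| < \frac{\#(\Gamma_0 \cap \Gamma_0^c)}{2}
\]
for every proper subgraph $\emptyset \subsetneq \Gamma_0 \subsetneq \Gamma$. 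Write this concisely as $|(\phi_{k,\vec{d}} - \phi)(\Gamma)(\Gamma_0)| < \#(\Gamma_0 \cap \Gamma_0^c)/2$.

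Next, I would reduce the family of subgraphs that need to be tested. First, if $\Gamma_0$ is not elementary, then splitting $\Gamma_0$ into its connected components $\Gamma_0 = \Gamma_0^{(1)} \sqcup \cdots \sqcup \Gamma_0^{(r)}$ shows that both the left- and right-hand sides above are subadditive (the left by the triangle inequality, the right because $\#(\Gamma_0 \cap \Gamma_0^c) = \sum_i \#(\Gamma_0^{(i)} \cap (\Gamma_0^{(i)})^c)$ when the components are disjoint, using that they are full subgraphs and the complements share their boundaries), so it suffices to test the inequality on elementary subgraphs. Second, given an elementary subgraph $\Gamma_0 \subsetneq \Gamma$, follow the argument of Lemma~\ref{Lemma: HyperplanesByTwoComponents}: contract $\Gamma_0$ to a vertex $v$ and $\Gamma_0^c$ to a vertex $w$, and then contract any resulting loops, producing a loopless $2$-vertex stable graph $\Gamma'$ with some $\alpha \geq 1$ edges between $v$ and $w$. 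Since both $\phi$ and $\phi_{k,\vec{d}}$ lie in $V_{g,n}^d$ and therefore satisfy the contraction compatibility \eqref{Eqn: CompatibilityRelation}, the sums over $\operatorname{Vert}(\Gamma_0)$ equal $\phi(\Gamma')(v)$ and $\phi_{k,\vec{d}}(\Gamma')(v)$ respectively, while $\#(\Gamma_0 \cap \Gamma_0^c) = \alpha = \#(\{v\} \cap \{w\})$ is preserved under the contraction.

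Therefore the $\phi$-stability of $\OD$ on every geometric fiber is equivalent to
\[
\bigl|\phi(\Gamma')(v) - \phi_{k,\vec{d}}(\Gamma')(v)\bigr| < \frac{\alpha}{2}
\]
holding for every loopless $2$-vertex stable marked graph $\Gamma'$ with first vertex $v$ of genus $i$ carrying the marking set $S$, joined to the second vertex by $\alpha$ edges. This is precisely the defining inequality \eqref{Qdgn} of $\mathcal{Q}(\phi_{k,\vec{d}})$ in Definition~\ref{polytopeQ}; the symmetric inequality at the second vertex $w$ follows automatically from $\sum_v \phi(\Gamma')(v) = d = \sum_v \phi_{k,\vec{d}}(\Gamma')(v)$. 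This establishes the equivalence claimed in the corollary.

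There is no serious obstacle in this proof: once one notices that $\OD$ is a \emph{line bundle} whose multidegree is $\phi_{k,\vec{d}}$, the condition of $\phi$-stability collapses to a condition measuring the discrepancy between $\phi$ and $\phi_{k,\vec{d}}$, and the two reductions (to elementary subgraphs, then via contraction to loopless $2$-vertex graphs) are already essentially in place in Lemma~\ref{Lemma: PhiDeterminedByTwoComponents} and Lemma~\ref{Lemma: HyperplanesByTwoComponents}. The only small care needed is to check that the elementary reduction is an \emph{equivalence} of conditions, not merely a sufficient set.
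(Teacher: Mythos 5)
Your proof is correct and takes essentially the same route as the paper, whose own argument is just the one-line reduction to vine curves (the content of Lemma~\ref{Lemma: PhiDeterminedByTwoComponents} and Lemma~\ref{Lemma: HyperplanesByTwoComponents}) combined with Definition~\ref{phistab}; you have simply written out the details, using that $\OD$ is a line bundle of multidegree $\phi_{k,\vec{d}}\in V_{g,n}^d$ so that contraction-compatibility transports the inequalities to loopless $2$-vertex graphs. One small point to tighten: your subadditivity step only reduces to \emph{connected} $\Gamma_0$, so to reach elementary subgraphs you should also invoke the symmetry of the line-bundle inequality under $\Gamma_0\mapsto\Gamma_0^c$ (both sides are unchanged since $\deg_{\Gamma_0}+\deg_{\Gamma_0^c}=d=\sum_v\phi(\Gamma)(v)$) and then decompose the complement into its connected components, each of which is elementary because $\Gamma$ is connected.
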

\begin{proof} This follows from Lemma~\ref{Lemma: PhiDeterminedByTwoComponents} (the fact that $\phi$-stability can be checked on curves with $2$ smooth irreducible components) and from Definition~\ref{phistab} (the definition of $\phi$-stability). 
 \end{proof}
We deduce the following characterization of the set of stability parameters such that the Abel--Jacobi section extends.
\begin{corollary} \label{extensionAJ}
	For $\phi \in V_{g,n}^d$ nondegenerate, the section $\sigma$ extends to a regular section $\Mb_{g,n} \to \Jb_{g, n}(\phi)$ if and only if, under the isomorphism of Corollary~\ref{isoalphan}, the projection  of the polytope ${\mathcal{Q}}(\phi_{k, \vec{d}})$ to $D_{g, n}$ contains the projection of $\phi$.
\end{corollary}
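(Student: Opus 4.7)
The plan is to combine Corollary~\ref{Cor: AbelMapIndeterminacy} with a product decomposition of the polytope $\mathcal{Q}(\phi_{k,\vec{d}})$ with respect to the splitting $V_{g,n}^d \cong C_{g,n} \oplus D_{g,n}$ of Corollary~\ref{isoalphan}.

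First I would unwind what ``$\sigma$ extends'' means. By Corollary~\ref{Cor: AbelMapIndeterminacy}, extending $\sigma$ to a regular morphism is equivalent to the vanishing of $T(\phi)$, i.e.~to the condition that for every stable pointed curve $(C, p_1, \ldots, p_n)$ whose dual graph $\Gamma$ is a loopless $2$-vertex graph with $\alpha \geq 2$ edges, the line bundle $\OD|_{(C,p_i)}$ is $\phi(\Gamma)$-stable. Since $\OD$ is a line bundle (hence $\delta_{\Gamma_0}=0$ at every node), Definition~\ref{phistab} simplifies the stability condition on such a $\Gamma$ to $|\phi_{k,\vec{d}}(\Gamma)(v)-\phi(\Gamma)(v)|<\alpha/2$. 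Thus the vanishing of $T(\phi)$ is exactly the subsystem of inequalities \eqref{Qdgn} corresponding to $\alpha \geq 2$.

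Next I would establish that $\mathcal{Q}(\phi_{k,\vec{d}})$ decomposes as a product $\mathcal{Q}_C \times \mathcal{Q}_D$ under the isomorphism $V_{g,n}^d \cong C_{g,n}\oplus D_{g,n}$. For this I invoke Formulas~\eqref{blah} and \eqref{explicitformula} (from the proof of Theorem~\ref{stabilityspace}) which express $\phi(\Gamma)$ in terms of the coordinates $(\alpha_{i,S})$ on $C_{g,n}$ and $(x_j)$ on $D_{g,n}$. The key point is that when $\alpha = 1$ (so $\Gamma = \Gamma(i,S)$), the difference $\phi(\Gamma)-\phi_{k,\vec{d}}(\Gamma)$ depends only on $\alpha_{i,S}$, while when $\alpha \geq 2$ (so $\Gamma$ has only loopless $2$-vertex shapes with $\geq 2$ edges), the same difference depends only on the $x_j$. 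Hence the $\alpha=1$ inequalities cut out a polytope $\mathcal{Q}_C \subseteq C_{g,n}$, the $\alpha \geq 2$ inequalities cut out a polytope $\mathcal{Q}_D \subseteq D_{g,n}$, and $\mathcal{Q}(\phi_{k,\vec{d}}) = \mathcal{Q}_C \times \mathcal{Q}_D$.

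Combining these two observations yields the corollary: the projection of $\mathcal{Q}(\phi_{k,\vec{d}})$ to $D_{g,n}$ is $\mathcal{Q}_D$, and containing the $D_{g,n}$-projection of $\phi$ in $\mathcal{Q}_D$ is exactly the condition $T(\phi)=\emptyset$, which is in turn equivalent to the regularity of $\sigma$. I expect the only substantive point is the product decomposition of $\mathcal{Q}$, and this is essentially a bookkeeping consequence of the bidegree computations already performed in Lemma~\ref{degreecalculation} and Theorem~\ref{stabilityspace}; everything else is a direct rephrasing of results already in hand.
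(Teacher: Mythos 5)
Your proposal is correct and follows essentially the same route as the paper: reduce via Corollary~\ref{Cor: AbelMapIndeterminacy} to $\phi$-stability of $\mathcal{O}(\mathcal{D})$ on vine curves with at least $2$ nodes, then use the explicit coordinate formulas behind Theorem~\ref{stabilityspace} (Equations~\eqref{blah} and \eqref{explicitformula}) to see that these conditions constrain only the $D_{g,n}$-projection of $\phi$. Your product decomposition $\mathcal{Q}(\phi_{k,\vec{d}})=\mathcal{Q}_C\times\mathcal{Q}_D$ is exactly the content the paper compresses into the phrase ``by applying Theorem~\ref{stabilityspace}''.
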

\begin{proof}
	By Corollary~\ref{Cor: AbelMapIndeterminacy} the Abel--Jacobi section extends to a regular morphism if and only if the restriction $\OD|_{(C, p_i)}$  to all curves $(C, p_i)$ with $2$ smooth irreducible components and at least $2$ nodes is  $\phi(\Gamma_{(C, p_i)})$-stable. By applying Theorem~\ref{stabilityspace} we deduce that this is equivalent to the fact that the projection  of ${\mathcal{Q}}(\phi_{k, \vec{d}})$ to $D_{g, n}$ contains the projection of $\phi$.  
\end{proof}
Concretely, this means that $\sigma$ extends to a regular section $\Mb_{g,n} \to \Jb_{g,n}(\phi)$ if and only if  Equation~\eqref{Qdgn} is satisfied by $\phi$ for all loopless graphs $\Gamma$ with $2$ vertices and at least $\alpha \geq 2$ edges.

We conclude this section by comparing the result just proven with the work of Grushevsky--Zakharov in \cite{grushevsky}.
\begin{remark}\label{Remark: RelationWIthGrushevsky}
	 For $\vec{d}$ satisfying $\sum d_i =0$, Grushevsky--Zakharov describe the indeterminacy of the Abel--Jacobi section $\sigma_{\vec{d}} := \sigma_{0, \vec{d}}$ considered as a morphism into a stack $\mathcal{X}'_{g} \to \mathcal{A}'_{g}$ they call Mumford's partial compactification.  This partial compactification is an extension of the universal family of principally polarized abelian varieties $\mathcal{X}_{g} \to \mathcal{A}_{g}$ that is constructed so that the fiber over a point of $\mathcal{A}'_{g} - \mathcal{A}_{g}$ is an explicit compactification of a semiabelian variety with $1$-dimensional maximal torus called a rank~$1$ degeneration.

	The Torelli map extends to a regular morphism $\Mb_{g,0}^{t \le 1} \to \mathcal{A}'_{g}$ out of the locus $\Mb_{g,0}^{t \le 1} \subseteq \Mb_{g,0}$ of  curves whose generalized Jacobian has torus rank at most $1$.  The pullback of  $\mathcal{X}'_{g}$ under the composition  $\Mb^{t \le 1}_{g,n} \to \Mb^{t \le 1}_{g,0} \to \mathcal{A}_{g}'$ is an extension of the universal Jacobian in degree $0$ that we will denote by $\mathcal{Y}'_{g,n}$ or by $\mathcal{Y}'_{g}$ when $n=0$.
	
	The extension $\mathcal{Y}'_{g,n}$ is not equal to a compactified universal Jacobian $\Jb_{g,n}(\phi)$ associated to a nondegenerate $\phi$.  Indeed, for all such $\phi$, the compactified Jacobian $\overline{J}_{C}(\phi)$ of a stable pointed curve $C$ that is the union of $2$ smooth curves meeting in $2$ nodes is a reducible variety (with two components corresponding to the two $\phi$-stable bidegrees), while the analogous fiber of $\mathcal{Y}'_{g,n}$ is irreducible.
	
	The family $\mathcal{Y}'_{g, n}$ is, however,  related to  compactified universal Jacobians associated to the degenerate parameter $\phi_{\text{can}}^{0}$, although the relation is somewhat subtle.  The stack $\Jb_{g,n}( \phi^{0}_{\text{can}})$ admits a good moduli scheme $\overline{P}_{g}^{0}$ (the scheme constructed in \cite{caporaso, panda}), and the authors believe it is expected by  experts that, over the automorphism-free locus,  $\mathcal{Y}'_{g}$ is isomorphic to $\overline{P}_{g}^{0}$.  For example, Caporaso describes the fiber $\overline{P}_{C}$ of $\overline{P}^{0}_{g} \to \Mb_{g,0}$ over a curve  $C$ with $2$ nonseparating nodes as a rank~$1$ degeneration in \cite[Figure~8]{caporaso}, and on \cite[page~240]{namikawa}, Namikawa indicates a relation between families over $\mathcal{A}'_{g}$, and more generally toroidal compactifications of $\mathcal{A}_{g}$,  and Oda--Seshadri's compactified Jacobians.  A proof that the two families are isomorphic does not, however,  seem to be available, although Alexeev has proven a parallel statement for $\overline{P}^{g-1}_{g}$ \cite[Corollary~5.4]{alexeev04}.

\end{remark}

\begin{example}	
	Grushevsky--Zakharov's main result \cite[Example~6.1, 6.2]{grushevsky} about the locus of indeterminacy of $\sigma_{\vec{d}} \colon \Mb_{g,n} \dashrightarrow \mathcal{Y}'_{g,n}$  is that the locus of indeterminacy equals the closure of the locus of stable marked curves with $2$ nonseparating nodes except for certain special choices of $\vec{d}$.  As we explained in Remark~\ref{Remark: RelationWIthGrushevsky}, $\mathcal{Y}'_{g,n}$ should be compared with $\Jb_{g,n}(\phi^{0}_{\text{can}})$ and the related scheme $\overline{P}^{0}_{g}$.  The main results of this section do not apply to $\Jb_{g,n}(\phi^{0}_{\text{can}})$ because $\phi^{0}_{\text{can}}$ is degenerate, but Grushevsky--Zakharov's result is consistent with our results, in a manner that we explain below.  Their proof of the result is, however, different from the proof we give, and we explain this as well.
	
	In \cite[Example~6.1]{grushevsky}, Grushevsky--Zakharov show that, except for certain special $\vec{d}$, the locus of indeterminacy contains every stable marked curve  $(C, p_1, \dots, p_n)$ that is the union of two general smooth curves $C_1$, $C_2$ of positive genera  meeting in two nodes and marked so that only $p_1$ lies on  $C_1$.  Let us analyze the behavior of $\sigma_{\vec{d}}$ around $(C, p_1, \dots, p_n)$ using the ideas of this section under the simplifying assumption that $C$ is automorphism-free.

	In this section, we  analyzed the indeterminacy of $\sigma_{\vec{d}} \colon \Mb_{g,n} \dashrightarrow \Jb_{g,n}(\phi)$ for $\phi$ nondegenerate by relating it to the stability of a sheaf on the universal family of curves.  That analysis shows  $(C, p_1, \dots,p_n)$ lies in the locus of indeterminacy when $\phi=\phi_{\epsilon}$ is nondegenerate and sufficiently close to $\phi_{\text{can}}^{0}$.  Assuming the result relating $\mathcal{Y}'_{g}$ to $\overline{P}^{0}_{g}$, we can alternatively deduce this from \cite{grushevsky}.  Indeed, there is a tautological morphism $\Jb_{g,n}(\phi_{\epsilon}) \to \Jb_{g,n}(\phi^{0}_{\text{can}})$ since every $\phi_{\epsilon}$-stable sheaf is $\phi_{\text{can}}^{0}$-semistable. Composing this map with the natural morphisms $\Jb_{g,n}(\phi^{0}_{\text{can}}) \to \Jb_{g,0}(\phi^{0}_{\text{can}})$ and $\Jb_{g,0}(\phi_{\text{can}}^{0}) \to \overline{P}^{0}_{g}$, we obtain a morphism $\Jb_{g,n}(\phi_{\epsilon}) \to \overline{P}^{0}_{g}$ that factors $\sigma_{\vec{d}} \colon \Mb_{g,n} \dashrightarrow \overline{P}^{0}_{g}$.  From the existence of the factorization, we can deduce the indeterminacy of the morphism into $\Jb_{g,n}(\phi_{\epsilon})$  from the  indeterminacy of the morphism into $\overline{P}^{0}_{g}$.
	
	While our result can be deduced from loc.~cit. (assuming a comparison result relating $\overline{P}^{0}_{g}$ to $\mathcal{Y}'_{g}$), the argument given there is different. Rather than analyzing stability conditions,  the authors of loc.~cit.~analyze the restriction of $\sigma_{\vec{d}}$ to certain curves.  Specifically, they construct some explicit morphisms $f \colon B \to \Mb_{g,n}$ out of a smooth curve $B$ that send a distinguished point $0 \in B$ to $(C, p_1, \dots, p_n)$.  By the valuative criterion, the composition $\sigma_{\vec{d}} \circ f \colon B \to \Mb_{g,n} \dashrightarrow \mathcal{Y}'_{g}$  extends to a regular morphism.  If $\sigma_{\vec{d}}$ had no indeterminacy around $(C, p_1, \dots, p_n)$, the image $(\sigma_{\vec{d}} \circ f)(0)$ would be independent of $f$, but they show by direct computation that this is not the case.

	Their result, or rather the analogue for the map into $\Jb_{g,n}(\phi_{\epsilon})$, can be explained in the language of this paper as follows.  The morphism $f$ corresponds to a family $\mathcal{C} \to B$ of stable marked curves with a distinguished point $ 0 \in B$ mapping to  $(C, p_1, \dots, p_n) \in \Mb_{g,n}$, and the morphism $\sigma_{\vec{d}} \circ f \colon B \to \Jb_{g,n}(\phi_{\epsilon})$ corresponds to a family of $\phi_{\epsilon}$-semistable sheaves that extends the family $\mathcal{O}( d_1 p_1 + \dots d_n p_n)$ on the generic fiber.  Such a family can be constructed explicitly using twistor sheaves (i.e.~sheaves on $\mathcal{C}$ that restrict to the trivial line bundle on the generic fiber), and except in the special cases,  changing $\mathcal{C} \to B$ changes the sheaf.  Twistor sheaves for some families similar to $\mathcal{C} \to B$ are explicitly constructed and described in \cite[Section~2.2]{marcus}.  
	
	The point $(\sigma_{\vec{d}} \circ f)(0)$ becomes independent of the choice of $f$ if we pass from $\phi_{\epsilon}$ to a nondegenerate parameter $\phi'$ with the property that $\mathcal{O}( d_1 p_1 + \dots + d_n p_n)$ is $\phi'$-stable.  Indeed,  $(\sigma_{\vec{d}} \circ f)(0)$ is then just the restriction of $\mathcal{O}( d_1 p_1 + \dots + d_n p_n)$. While there is a straightforward  description of what is happening in terms of stability conditions, the geometry is subtle.  The manner in which the geometry changes when  passing from $\overline{P}^{0}_{g}$ to $\Jb_{g,n}(\phi_{\epsilon})$ can be described explicitly: the pullback of $\overline{P}^{0}_{g}$ to $\Mb_{g,n}$ is blown-up so that the fiber over  $(C, p_1, \dots, p_n)$ changes from the variety labeled ``Special case" in \cite[Figure~8]{caporaso} to the variety labeled ``General case".  It is more difficult to describe how the geometry changes in passing from $\Jb_{g,n}(\phi_{\epsilon})$ to $\Jb_{g,n}(\phi')$.  Locally around $(C, p_1, \dots, p_n)$, both families have the same fibers, so the manner in which the indeterminacy is resolved cannot be seen by  analyzing fibers.
\end{example}

\subsection{Different fine compactified universal Jacobians}

\label{Section: different}

The main goal of this section is to show, for fixed $(g,n)$, the existence of non-isomorphic fine compactified universal Jacobians $\Jb_{g, n}(\phi)$.  We do this by taking advantage of the natural action of a certain group $\widetilde{\pr}_{g,n}$  (defined in Definition~\ref{Def: GroupAction}) on the set of stability polytopes $\mathcal{P}_{g,n}$ (defined in Definition~\ref{Definition: PolytopesFamilyCurves}). In Lemma~\ref{isoiffsameorbit} we prove that  two compactified Jacobians $\Jb_{g,n}(\phi_1)$ and $\Jb_{g,n}(\phi_2)$ are isomorphic over $\Mb_{g,n}$ if and only if $\mathcal{P}(\phi_1)$ lies in the same orbit as $\mathcal{P}(\phi_{2})$.

We study the property of this group action in Corollary~\ref{whentransitive}, where we show that it fails to be transitive except for few, low genus, special cases of $(g,n)$. We immediately deduce that, except for the special cases, for a given $(g,n)$, there exist at least two $\Jb_{g,n}(\phi)$'s that are not isomorphic over $\Mb_{g,n}$.  This is Corollary~\ref{noisocommuting}, and in  Corollary~\ref{nonisomorphic}, we prove the stronger statement that, provided $\Mb_{g,n}$ is of general type, there exist $\Jb_{g,n}(\phi)$'s that are not isomorphic as stacks (rather than as stacks over $\Mb_{g,n}$).

A very similar problem is to fix $d \in \mathbb{Z}$ and classify fine compactified universal Jacobians $\Jb_{g, n}(\phi)$ of degree $d$. We observe in Remark~\ref{fixedd} that this classification problem is very similar to the previously described one, where the degree of the fine compactified universal Jacobians is allowed to vary. 

The group acting on stability polytopes is the following one.

\begin{definition} \label{Def: GroupAction}
	Let $\widetilde{\pr}_{g,n}$ be the generalized dihedral group defined by the action $L \mapsto L^{(-1)^t}$ of $t \in \mathbb{Z}/2\mathbb{Z}$ on $\picrel_{g,n}(\mathbb{Z})$.  In other words, $\widetilde{\pr}_{g,n}$ is the semi-direct product  \[\widetilde{\pr}_{g,n}:= \left(\picrel_{g,n}(\mathbb{Z})\right) \rtimes \mathbb{Z}/2 \mathbb{Z}.\]
\end{definition}

The group $\widetilde{\pr}_{g,n}$ acts on families of rank~$1$ torsion-free sheaves on families of stable pointed curves by
the rule
\begin{equation} \label{psi}
\psi(L,t) \colon F \mapsto F^{(-1)^t} \otimes L.
\end{equation}
(That $F \mapsto F^{-1}$ gives a well-defined map in families follows from Lemma~\ref{basechange} in Section~\ref{Section: Appendix}).
Similarly, the group $\widetilde{\pr}_{g,n}$ also acts on the stability space $V_{g,n}$ by the affine endomorphisms
\begin{equation} \label{lambda}
\lambda(L,t) \colon \phi \mapsto (-1)^{t} \cdot \phi + \deg(L).
\end{equation}

The following key observation relates the two actions $\psi$ and $\lambda$.
\begin{lemma} \label{key} Assume that $F$ has degree $d$ and that $\phi \in V_{g,n}^d$. Then $F$ is $\phi$-(semi)stable if and only if $\psi(L,t)(F)$ is $\lambda(L,t)(\phi)$-(semi)stable.
 In particular, when $\phi$ is nondegenerate, $\psi(L,t)$ induces a well-defined isomorphism $\Jb_{g,n}(\phi) \to \Jb_{g,n}(\lambda(\phi))$ that commutes with the forgetful maps to $\Mb_{g,n}$. \end{lemma}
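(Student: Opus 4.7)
The plan is to verify the first part geometric-fiberwise using Definition~\ref{phistab}, and then deduce the statement on rigidified stacks. I would split into the two cases $t = 0$ and $t = 1$.

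For $t = 0$, the operation $F \mapsto F \otimes L$ preserves the local-freeness locus of $F$, so $\delta_{\Gamma_0}(F \otimes L) = \delta_{\Gamma_0}(F)$, while $\deg_{\Gamma_0}(F \otimes L) = \deg_{\Gamma_0}(F) + \deg_{\Gamma_0}(L)$.  Since $\lambda(L,0)(\phi) = \phi + \deg(L)$ satisfies $\sum_{v \in \operatorname{Vert}(\Gamma_0)} \deg(L)(v) = \deg_{\Gamma_0}(L)$, the extra contributions cancel inside the absolute value in Inequality~\eqref{Eqn: SymDefOfStability}, while the right-hand side is unchanged.  For $t = 1$, I would show that the inversion $F \mapsto F^{-1}$, whose formation in families is the content of Lemma~\ref{basechange} in the Appendix, interacts with the multidegree and $\delta_{\Gamma_0}$ in just the right way so that, combined with the flip $\phi \mapsto -\phi$, the quantity inside the absolute value in \eqref{Eqn: SymDefOfStability} gets negated while the right-hand side is preserved; the absolute value then makes the equivalence clear.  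The general $(L,1)$ case follows by composing the $t = 1$ operation with a $t = 0$ tensor.

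For the second statement, the stability-preserving property just established shows that $\psi(L, t)$ sends the groupoid of $\phi$-(semi)stable families to that of $\lambda(L,t)(\phi)$-(semi)stable families; an explicit two-sided inverse (coming from the inverse in the group $\widetilde{\pr}_{g,n}$) makes this an equivalence.  A direct check shows that scalar multiplication by $g \in \mathbb{G}_m$ on $F$ corresponds to scalar multiplication by $g^{(-1)^t}$ on $\psi(L,t)(F)$, so the embeddings of $\mathbb{G}_m$ used in the rigidification of Definition~\ref{Def: moduliStacks} agree up to an automorphism of $\mathbb{G}_m$; hence $\psi(L, t)$ descends to an isomorphism $\Jb_{g,n}(\phi) \to \Jb_{g,n}(\lambda(L,t)(\phi))$.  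Since the underlying pointed curve is untouched, this isomorphism commutes with the forgetful map to $\Mb_{g,n}$.

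The main obstacle is the $t = 1$ case: setting up the inversion on families of rank~$1$ torsion-free sheaves (which have non-locally-free loci) in a manner compatible with base change and with the sign conventions needed above.  The Appendix Lemma~\ref{basechange} supplies exactly what is needed, after which the verification is a matter of unwinding Definition~\ref{phistab}.
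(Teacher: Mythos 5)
Your proposal is correct and takes essentially the same route as the paper, whose proof simply observes that the claim follows by unwinding Definition~\ref{phistab} and Definition~\ref{Def: moduliStacks} (with Lemma~\ref{basechange} handling the dual in families); your $t=0$ computation and the rigidification/forgetful-map check are exactly the intended verifications. The one step you defer in the $t=1$ case does hold: $\delta_{\Gamma_0}(F^{\vee}) = \delta_{\Gamma_0}(F)$ and $\deg_{\Gamma_0}(F^{\vee}) = -\deg_{\Gamma_0}(F) - \delta_{\Gamma_0}(F)$, so the shift by $\delta_{\Gamma_0}(F)/2$ in the symmetric form of Inequality~\eqref{Eqn: SymDefOfStability} makes the bracketed quantity get negated while the right-hand side is unchanged, as you asserted.
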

\begin{proof} The claim follows immediately from  Definition~\ref{phistab} and Definition~\ref{Def: moduliStacks}.
\end{proof}

In fact, all isomorphisms that commute with the forgetful maps are defined by Rule~\eqref{psi} for a suitable choice of $(L,t) \in \widetilde{\pr}_{g,n}$, as we prove in the next lemma.

\begin{lemma} \label{isoiffsameorbit} Let $\phi_1 \in V_{g,n}^{e_1}$ and $\phi_2 \in V_{g,n}^{e_2}$ be nondegenerate, and assume $\alpha \colon \Jb_{g,n}(\phi_1) \to \Jb_{g,n}(\phi_2)$ is a birational morphism that commutes with the forgetful maps to $\Mb_{g,n}$. Then there exists $(L,t) \in \widetilde{\pr}_{g,n}$ such that $\alpha = \psi(L, t)$.
\end{lemma}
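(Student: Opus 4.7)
The plan is to use the strong Franchetta-type result Corollary~\ref{corfranchetta} to extract a candidate element $(L_0, t) \in \widetilde{\pr}_{g,n}$ from the generic behavior of $\alpha$, and then to extend the coincidence $\alpha = \psi(L_0, t)$ from the smooth locus to all of $\Mb_{g,n}$ by a separatedness argument. First, I would restrict $\alpha$ to the preimage of $\Mm_{g,n} \subseteq \Mb_{g,n}$. Since $\Jb_{g,n}(\phi_i)$ restricts to $\J_{g,n}^{e_i}$ over this locus and $\alpha$ commutes with the forgetful maps, we obtain a birational morphism $\alpha_0 \colon \J_{g,n}^{e_1} \to \J_{g,n}^{e_2}$ commuting with the forgetful maps to $\Mm_{g,n}$. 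Corollary~\ref{corfranchetta} then yields integers $k, d_1, \ldots, d_n$ and $t \in \{0,1\}$ such that $\alpha_0$ acts by $L \mapsto L^{(-1)^t} \otimes \omega_C^{\otimes -k}(d_1 p_1 + \ldots + d_n p_n)$. Take $L_0 := \omega_\pi^{\otimes -k}(d_1 \Sigma_1 + \ldots + d_n \Sigma_n) \in \picrel_{g,n}(\mathbb{Z})$; the candidate element is then $(L_0, t) \in \widetilde{\pr}_{g,n}$.

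By Lemma~\ref{key}, $\psi(L_0, t)$ extends to a morphism $\Jb_{g,n}(\phi_1) \to \Jb_{g,n}(\phi')$, where $\phi' := \lambda(L_0, t)(\phi_1)$, and by construction agrees with $\alpha$ on the dense open substack $\J_{g,n}^{e_1} \subseteq \Jb_{g,n}(\phi_1)$. It remains to argue that $\alpha = \psi(L_0, t)$ on all of $\Jb_{g,n}(\phi_1)$, which will simultaneously force $\Jb_{g,n}(\phi_2) = \Jb_{g,n}(\phi')$ as substacks of $\operatorname{Simp}_{g,n}^{e_2}$ (so $\phi_2$ and $\phi'$ lie in the same stability polytope). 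Both morphisms factor through the proper, hence separated, substacks $\Jb_{g,n}(\phi_2), \Jb_{g,n}(\phi') \subseteq \operatorname{Simp}_{g,n}^{e_2}$ (Corollary~\ref{Cor: JbExists}). I would then consider the composite $(\alpha, \psi(L_0, t)) \colon \Jb_{g,n}(\phi_1) \to \Jb_{g,n}(\phi_2) \times_{\operatorname{Simp}_{g,n}^{e_2}} \Jb_{g,n}(\phi')$: on $\J_{g,n}^{e_1}$ its image lies in the locus where the two factors give the same sheaf, and separatedness of each factor over $\Mb_{g,n}$ makes the fiber product separated, so a valuative-criterion argument should identify the two morphisms everywhere.

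The hard part will be making the last step rigorous, since the two proper substacks $\Jb_{g,n}(\phi_2)$ and $\Jb_{g,n}(\phi')$ sit as a priori distinct compactifications of $\J_{g,n}^{e_2}$ inside the non-separated ambient $\operatorname{Simp}_{g,n}^{e_2}$, and two specializations of a common generic family---one in each---need not agree as points of $\operatorname{Simp}_{g,n}^{e_2}$. What saves us is that $\alpha$ is a genuine morphism (not merely a rational map) of proper DM stacks: for any DVR $\spec R \to \Jb_{g,n}(\phi_1)$ whose generic point lands in $\J_{g,n}^{e_1}$, the closed values of $\alpha$ and $\psi(L_0,t)$ are both specializations of the same generic simple sheaf, and reducing to vine curves via Lemma~\ref{Lemma: PhiDeterminedByTwoComponents} turns the identification of these closed values into an explicit calculation on the fibers $\overline{J}_{C}(\phi_1(\Gamma_C)) \to \overline{J}_{C}(\phi_2(\Gamma_C))$, where the stability conditions on both sides can be matched up directly.
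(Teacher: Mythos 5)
There is a genuine gap, and it sits exactly where you locate the ``hard part'': your candidate element $(L_0,t)$ with $L_0=\omega_\pi^{\otimes -k}(d_1\Sigma_1+\ldots+d_n\Sigma_n)$ is in general the \emph{wrong} element of $\widetilde{\pr}_{g,n}$, and no separatedness or valuative-criterion argument can make $\alpha=\psi(L_0,t)$ true. The generic restriction only determines $(L,t)$ up to twists by line bundles trivial over $\pi^{-1}(\Mm_{g,n})$, i.e.~up to twists by the boundary components $\mathcal{O}(C^{\pm}_{i,S})$. A concrete counterexample to your claim: take $\phi_2=\lambda(\mathcal{O}(C^-_{i,S}),0)(\phi_1)$ in a different stability polytope and $\alpha=\psi(\mathcal{O}(C^-_{i,S}),0)$; this restricts to the identity on $\J_{g,n}^{e_1}$, so your recipe gives $L_0=\mathcal{O}$ and predicts $\alpha=\mathrm{id}$ and $\Jb_{g,n}(\phi_2)=\Jb_{g,n}(\phi_1)$, which is false. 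The underlying phenomenon is precisely the non-separatedness of $\operatorname{Simp}_{g,n}^{e_2}$ that you hoped to circumvent: a family of line bundles over the generic point of a boundary divisor has many limits, differing by twists supported on the special fibers, and $\alpha(F)$ selects the $\phi_2$-stable limit while $\psi(L_0,t)(F)$ selects the $\lambda(L_0,t)(\phi_1)$-stable one; when the polytopes differ these are genuinely different sheaves, so the ``explicit calculation on vine curves'' you propose would show the two closed values disagree rather than agree. The fact that $\alpha$ is a morphism, not just a rational map, does not rescue this.

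The paper's proof repairs exactly this point: after invoking Corollary~\ref{corfranchetta} as you do, it replaces $L_0$ by $L=\omega_{\pi}^{\otimes -k}\otimes\mathcal{O}(d_1\Sigma_1+\ldots+d_n\Sigma_n+\sum a_{i,S}C^-_{i,S})$, where the integers $a_{i,S}$ are the nearest-integer approximations of the restriction of $\phi_2-(-1)^t\phi_1-\deg(L_0)$ to a general curve of $\Delta(i,S)$; this boundary correction is what forces $\psi(L,t)$ to land in $\Jb_{g,n}(\phi_2)$ and to agree with $\alpha$ over the locus $\Mb_{g,n}^{\leq 1}$ of curves with at most one node. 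The passage from agreement over $\Mb_{g,n}^{\leq 1}$ to agreement everywhere is then done not by a valuative criterion but by comparing the pullbacks of a tautological sheaf $F_{\text{tau}}(\phi_2)$ under $\alpha\times\mathrm{Id}$ and $\psi(L,t)\times\mathrm{Id}$: these agree outside a closed substack of codimension $2$, and the reflexivity results (Lemma~\ref{Lemma: G1S2} together with Corollary~\ref{Cor: determinedcod2}) extend the isomorphism, with an \'etale cover used when $n=0$ since no tautological sheaf exists globally. Your outline is missing both the boundary-twist correction (the heart of the statement ``there exists $(L,t)$'') and a mechanism, such as the codimension-$2$ extension of reflexive sheaves, that can replace the separatedness argument which fails here.
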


\begin{proof}
By Corollary~\ref{corfranchetta}, the restriction of $\alpha$ to $\J_{g,n}^{e_1}$ is an isomorphism given by
\[
 L \mapsto L^{(-1)^t} \otimes  \omega_{\pi}^{\otimes - k} \otimes \mathcal{O}_C( d_1 \Sigma_1+ \ldots + d_n \Sigma_n)
\]
for some $(k; d_1, \ldots, d_n) \in \mathbb{Z}^{n+1}$ and $t \in \{0,1\}$. The latter can be extended to a well-defined morphism $\Jb_{g,n} ^{\leq1}(\phi_1) \to \Jb_{g,n} ^{\leq1}(\phi_2)$ (here $\Jb_{g,n}^{\leq 1}(\phi)$ denotes the restriction of $\Jb_{g,n}(\phi)$ to $\Mb_{g,n}^{\leq 1}$, the substack parameterizing curves with at most $1$ node) by the rule $\psi(L,t)$ as defined by Equation~\eqref{psi}. Here we have defined
\[
L:=  \omega_{\pi}^{\otimes -k} \otimes \mathcal{O}_C( d_1 \Sigma_1+ \ldots + d_n \Sigma_n + \sum a_{i,S} \cdot {C}^-_{i,S})
\]
for $C_{i,S}^-$ as defined in Section~\ref{Picard}, and $a_{i,S}$ defined to be the componentwise approximation to the nearest integer of the restriction of \[\phi_2 - (-1)^t \phi_1 - \deg \left(  \omega_{\pi}^{\otimes -k} + d_1 \Sigma_1+ \ldots + d_n \Sigma_n \right)\] to a general curve of $\Delta(i,S)$.

While it is a priori not clear that this $\psi(L,t)$ extends to a morphism $\Jb_{g,n}(\phi_1) \to \Jb_{g,n}(\phi_2)$, the argument in the above paragraph shows that the restriction of $\alpha$ to $\Jb_{g,n} ^{\leq1}(\phi_1)$ coincides with $\psi(L,t)$. To conclude, we need to prove that $\alpha$ and $\psi(L,t)$  coincide on $\Jb_{g,n}(\phi_1)$.

When $n \geq 1$, consider a tautological sheaf $F_{\text{tau}}(\phi_2)$ on $\Jb_{g,n}(\phi_2) \times_{\Mb_{g,n}} \Cb_{g,n}$ (which exists by \cite[Lemma 3.35]{kasspa}). The pullback via $\alpha \times \text{Id}$ and via $\psi \times \text{Id}$ of $F_{\text{tau}}(\phi_2)$ coincide on the locus \begin{equation} \label{locus} \Jb_{g,n}(\phi_1)^{\leq 1} \times_{\Mb_{g,n}^{\leq 1}} \Cb_{g,n}^{\leq 1}\end{equation} where the underlying curve has at most $1$ node. Because the locus \eqref{locus} is an open substack of $\Jb_{g,n}(\phi) \times_{\Mb_{g,n}} \Cb_{g,n}$ whose complement has codimension $2$, by Lemma~\ref{Lemma: G1S2} and Corollary~\ref{Cor: determinedcod2} the two pullbacks must coincide everywhere. This implies that $\psi(L,t) \times \text{Id}$ can be extended to coincide with $\alpha \times \text{Id}$ everywhere, which implies that $\alpha=\psi(L,t)$ on $\Jb_{g,n}(\phi_1)$.

When $n=0$ apply the same argument of the above paragraph after first passing to an \'etale cover $\mathcal{U} \to \Jb_{g,0}(\phi)$ such that a tautological sheaf exists on  $\mathcal{U} \times_{\Mb_{g,0}} \Cb_{g,0}$. (To prove that $\alpha \times \text{Id}$ and  $\psi \times \text{Id}$ coincide, it is enough to check that the same holds \'etale locally.) This concludes the proof.
\end{proof}

Here is a corollary of Lemma~\ref{isoiffsameorbit} that gives a new significance to the polytope decomposition of $D_{g,n}$ (see Theorem~\ref{stabilityspace} and, in particular, the set  of equations in~\eqref{otherwalls}).
\begin{corollary} Fix $d \in \mathbb{Z}$ and let $\phi_1, \phi_2 \in V_{g,n}^d$ be nondegenerate. Then there exists an isomorphism (or, equivalently, a birational morphism) $\Jb_{g,n}(\phi_1) \to  \Jb_{g,n}(\phi_2)$ that extends the identity on $\mathcal{J}_{g,n}^d$ and that commutes with the forgetful maps to $\Mb_{g,n}$ if and only if the projections of the stability polytopes $\mathcal{P}(\phi_1)$ and $\mathcal{P}(\phi_2)$ to $D_{g,n}$ coincide.
\end{corollary}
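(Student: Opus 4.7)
The plan is to combine three earlier results: Lemma~\ref{isoiffsameorbit}, which forces every birational morphism commuting with the forgetful map to $\Mb_{g,n}$ to have the form $\psi(L,t)$; Theorem~\ref{stabilityspace}, which describes the stability polytope decomposition of $V^d_{g,n}$ as the product of the stability polytope decompositions of $C_{g,n}$ and $D_{g,n}$; and Lemma~\ref{degreecalculation}, which tracks how the boundary twisters in $W_{g,n}$ act on these two factors. Under the isomorphism of Corollary~\ref{isoalphan} I will write $\mathcal{P}(\phi_i) = P_i \times Q_i$ with $P_i \subseteq C_{g,n}$ a unit cube and $Q_i \subseteq D_{g,n}$ a stability polytope.

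For the ``if'' direction, suppose $Q_1 = Q_2$.  Since by \eqref{wallsct} the polytopes in $C_{g,n}$ are unit cubes related by integer translations of the coordinates $\alpha_{i,S}$, and by Equation~\eqref{one} the classes $\mathcal{O}(C^+_{i,S})$ map under $\deg$ precisely to the standard basis of this integer lattice, I can pick $L \in W_{g,n}$ such that $P_1 + \deg(L)|_{C_{g,n}} = P_2$.  Equation~\eqref{two} guarantees that $\deg(L)$ is trivial on the $D_{g,n}$ factor, so $\phi_1 + \deg(L)$ and $\phi_2$ share the same stability polytope $P_2 \times Q_2 = \mathcal{P}(\phi_2)$.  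Then Lemma~\ref{key} produces an isomorphism $\psi(L,0) \colon \Jb_{g,n}(\phi_1) \to \Jb_{g,n}(\phi_2)$ which commutes with the forgetful maps by construction and, because $L \in W_{g,n}$ is trivial on every smooth curve, restricts to the identity on $\J^d_{g,n}$.

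For the converse, given a birational morphism $\alpha$ over $\Mb_{g,n}$ extending the identity on $\J^d_{g,n}$, Lemma~\ref{isoiffsameorbit} writes $\alpha = \psi(L,t)$ for some $(L,t) \in \widetilde{\pr}_{g,n}$.  The hypothesis that $\alpha$ be the identity on $\J^d_{g,n}$, combined with the description of $\alpha|_{\J^d_{g,n}}$ coming from Corollary~\ref{corfranchetta}, forces $t=0$, $k=0$ and every $d_i=0$ in the explicit formula given in the proof of Lemma~\ref{isoiffsameorbit}; the residual piece is therefore a boundary twister $L \in W_{g,n}$, whose class in $\deg(L)$ lies in $C_{g,n} \oplus \{0\}$ by Equation~\eqref{two}.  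Hence $\phi_1$ and $\phi_1+\deg(L)$ have the same $D_{g,n}$-projection.  Since $\alpha=\psi(L,0)$ is an isomorphism, Lemma~\ref{key} forces $\phi_1+\deg(L)$ to lie in $\mathcal{P}(\phi_2)$, and the $D_{g,n}$-projections of $\mathcal{P}(\phi_1)$ and $\mathcal{P}(\phi_2)$ therefore coincide.

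The main obstacle in this plan is the ``only if'' direction: one must argue carefully that an isomorphism which restricts to the identity on the generic fiber is represented by a twister supported on the boundary.  This is precisely where Fact~\ref{franchetta} (the strong Franchetta conjecture) and the explicit structure of $\picrel_{g,n}(\mathbb{Z})$ from Section~\ref{Picard} enter, via Corollary~\ref{corfranchetta}; once the boundary support of $L$ is secured, the rest of the argument is a direct computation using Lemma~\ref{degreecalculation} and the product structure of Theorem~\ref{stabilityspace}.
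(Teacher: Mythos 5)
Your proof is correct and follows essentially the same route as the paper, whose own proof is simply to combine Lemma~\ref{key} and Lemma~\ref{isoiffsameorbit}; you have just made explicit the supporting ingredients (the product decomposition of the polytopes from Theorem~\ref{stabilityspace}, the bidegree computations \eqref{one}--\eqref{two} showing $W_{g,n}$ translates the $C_{g,n}$-factor and acts trivially on $D_{g,n}$, and Corollary~\ref{corfranchetta} to pin down the twister on the interior). The only cosmetic quibble is that $\deg(\mathcal{O}(C^+_{i,S}))$ is \emph{minus} the standard basis vector in the $\alpha_{i,S}$-coordinate, which is harmless since $W_{g,n}$ is a group.
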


\begin{proof}
Combine Lemma~\ref{key} and Lemma~\ref{isoiffsameorbit}. \end{proof}

Each affine endomorphism $\lambda(L,t)$ maps stability polytopes (from Definition~\ref{Definition: PolytopesFamilyCurves}) to stability polytopes. Thus $\lambda$ induces an action, that we call $\mu$, of $\widetilde{\pr}_{g,n}$ on the set of stability polytopes $\mathcal{P}_{g,n}$.  Lemma~\ref{isoiffsameorbit} gives a combinatorial answer to a natural classification problem, as it implies that the isomorphism classes over $\Mb_{g,n}$ of $\Jb_{g,n}(\phi)$ for $\phi$ varying in $\coprod_{d \in \mathbb{Z}} V_{g,n}^d$ and nondegenerate,  are in natural bijection with the quotient set  $\mathcal{P}_{g,n}/ \widetilde{\pr}_{g,n}$.


To study the number of orbits of the action of $\widetilde{\pr}_{g,n}$ on $\mathcal{P}_{g,n}$, we now exhibit fundamental domains for the $\lambda$-action of $\widetilde{\pr}_{g,n}$ on $\coprod_{d\in \mathbb{Z}} V_{g,n}^d$.  A fundamental domain $U$ is a subset of $\coprod_{d\in \mathbb{Z}} V_{g,n}^d$ that contains at least one point of each orbit of $\widetilde{\pr}_{g,n}$, and no two points in the interior of $U$ are equivalent. To state our result, we identify $V_{g,n}^0$ with $C_{g,n} \oplus D_{g,n}$ by means of Corollary~\ref{isoalphan}.
\begin{lemma} \label{fundamentaldomain} A fundamental domain for the action of $\picrel_{g,n}(\mathbb{Z})$ (obtained by restricting $\lambda$) on $\coprod_{d\in \mathbb{Z}} V_{g,n}^d$ is given by
\begin{enumerate}
\item any hypercube in $C_{g,n}$ of edge length $1$ when $g=0$;
\item the product of any hypercube in $C_{g,n}$ of edge length $1$ with the set \[\{d \in {1, \ldots, 2g-2} : \gcd(d+1-g, 2g-2)=1\}\] when  $g \geq 2$ and $n=0$;
\item the product of any hypercube in $C_{g,n}$ of edge length $1$ and of any $(2g-2+\delta_{1,g} \times 1 \times \ldots \times 1)$ top dimensional hyperrectangle in $D_{g,n}$, when $g,n \geq 1$.
    \end{enumerate}
\end{lemma}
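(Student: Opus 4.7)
The plan is to translate the problem into linear algebra via Theorem~\ref{pic=stab}. The multidegree map identifies $\picrel_{g,n}(\mathbb{R})$ with $V_{g,n}$ and restricts to a bijection between the preimage of $\mathbb{Z}$ under the fiberwise degree map and $\coprod_{d\in\mathbb{Z}} V_{g,n}^d$. Under this identification $\lambda$ becomes translation by the integer lattice $\picrel_{g,n}(\mathbb{Z})$, so it suffices to describe that lattice in suitable coordinates.

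First I would split off the ``degree direction'' using the short exact sequence
\[
0 \to \picrel_{g,n}^0(\mathbb{Z}) \to \picrel_{g,n}(\mathbb{Z}) \xrightarrow{\deg} \mathrm{Im}(\deg) \to 0.
\]
By Fact~\ref{arbarellocornalba}, $\mathrm{Im}(\deg)$ is $\{0\}$ when $g=0$, is $\mathbb{Z}$ when $g\geq 1,\, n\geq 1$ (generated by a section $\Sigma_1$), and is $(2g-2)\mathbb{Z}$ when $g\geq 2,\, n=0$ (generated by $\omega_\pi$). Choosing a set-theoretic section reduces every orbit to a fundamental set of degrees: a single degree in cases (1) and (3), and $2g-2$ residues modulo $2g-2$ in case (2). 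For case (3) this lets me place the entire fundamental domain inside $V_{g,n}^0$; for case (2), the gcd condition in the statement is exactly the one from Remark~\ref{nomarkedpoints} characterising those $d$'s for which $V_{g,0}^d$ contains any nondegenerate parameter, which are the only residues carrying a non-trivial stability-polytope decomposition.

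Next I would compute the image of $\picrel_{g,n}^0(\mathbb{Z})$ in the coordinates $C_{g,n}\oplus D_{g,n}$ provided by Corollary~\ref{isoalphan}. By Corollary~\ref{piczero}, this group is freely generated by the $\mathcal{O}(C_{i,S}^+)$ and (for $g\geq 1$) by the twisted sections $T_j$, and Lemma~\ref{degreecalculation} gives explicit multidegrees. The key observation is that each $\mathcal{O}(C_{i,S}^+)$ contributes the standard basis vector $-e_{(i,S)}^C \in C_{g,n}$ and has zero $D_{g,n}$-component, so the $\mathcal{O}(C_{i,S}^+)$'s already generate the full unit lattice of $C_{g,n}$. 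I can then modify each $T_j$ by an appropriate integer combination of $\mathcal{O}(C_{i,S}^+)$'s to zero out its $C_{g,n}$-component, since Lemma~\ref{degreecalculation} exhibits those components as integers. The residual $D_{g,n}$-component of $T_j$ is $(2g-2+\delta_{1,g})\,e_j^D$, so the image lattice splits as the direct sum of the unit lattice of $C_{g,n}$ and the lattice of edge length $2g-2+\delta_{1,g}$ in $D_{g,n}$; the corresponding product of hypercubes is exactly the fundamental domain claimed.

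There is no serious obstacle here---the argument is essentially Smith normal form applied to the explicit matrix of Lemma~\ref{degreecalculation}. The only care required is in case (2): one must check that $\omega_\pi$ acts trivially on the $C_{g,n}\oplus D_{g,n}$ coordinates (as it should, being the basepoint used to identify the various $V_{g,n}^d$ with $C_{g,n}\oplus D_{g,n}$ in Corollary~\ref{isoalphan}), so that the residues produced by the degree shift really give independent components of the fundamental domain, after which the matching with the gcd condition is just Remark~\ref{nomarkedpoints}.
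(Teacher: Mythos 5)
Your cases (2) and (3) are in substance the paper's own proof: the paper filters $\picrel_{g,n}(\mathbb{Z})$ by the subgroup $W_{g,n}$, then the subgroup generated by $\omega_{\pi}$ (or $\Sigma_1$ when $g=1$), then the remaining sections, and this is exactly the Smith-normal-form computation you perform on the matrix of Lemma~\ref{degreecalculation} (clearing the integral $C_{g,n}$-components of the $T_j$, which then act on $D_{g,n}$ by $(2g-2+\delta_{1,g})$-translations); in case (2) the paper likewise just invokes Remark~\ref{nomarkedpoints} to discard the residues with $\gcd(d+1-g,2g-2)>1$, which carry no stability polytopes.

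Case (1), however, contains a genuine gap. You assert that $\mathrm{Im}(\deg)=\{0\}$ when $g=0$, and immediately afterwards that the degree can be reduced to ``a single degree in cases (1) and (3)''. These two claims are incompatible: if the lattice really had trivial image under the fiberwise-degree map it would preserve each slice $V_{0,n}^d$, every orbit would be confined to its own degree, and a fundamental domain would have to contain a hypercube in \emph{every} degree $d\in\mathbb{Z}$ --- i.e.\ item (1) of the lemma as stated would fail. What makes the statement true, and what the paper's proof actually uses, is that for $g=0$ one has $n\geq 3$, so the marking sections $\Sigma_j$ give classes in $\picrel_{0,n}(\mathbb{Z})$ of fiberwise degree $1$; hence the degree map is surjective onto $\mathbb{Z}$, a section-translate moves any point into the degree-$0$ slice, and there $\picrel^0_{0,n}(\mathbb{Z})=W_{0,n}$ (Corollary~\ref{piczero}) acts as the unit lattice on $C_{0,n}$ while $D_{0,n}=\{0\}$. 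Note that the sections cannot lie in the span of the $\mathcal{O}(C_{i,S}^+)$ for degree reasons, so you cannot read $\mathrm{Im}(\deg)$ off the list of generators in Fact~\ref{arbarellocornalba}(1) alone; you must invoke a section separately, exactly as the paper does when it lets ``the subgroup generated by a section'' act transitively on $\coprod_{d\in\mathbb{Z}} D^d_{0,n}$. With this correction your argument goes through.
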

In the proof we will use the free generators of the relative Picard group given in Fact~\ref{arbarellocornalba}. It will also be convenient to keep track of the degree in the right hand side of the isomorphism of Corollary~\ref{isoalphan}, thus we will write $V_{g,n}^d \cong C_{g,n}^d \oplus D_{g,n}^d$
\begin{proof}
A fundamental domain for the subgroup $W_{g,n} \subseteq \widetilde{\pr}_{g,n}$ is given by the product of any hypercube in $C^0_{g,n}$ of edge length $1$ with $\coprod_{d \in \mathbb{Z}} D_{g,n}^d$.

 When $g =0$ the claim follows, because the action of the subgroup $\picrel_{g,n}(\mathbb{Z})$ generated by a section on the collection of points $\coprod_{d \in \mathbb{Z}} D_{0,n}^d$ is transitive.

 When $g\geq 1$, a fundamental domain for the action of the subgroup generated by $\omega_{\pi}$ (or by $\Sigma_1$ when $g=1$) on $\coprod_{d \in \mathbb{Z}} D_{g,n}^d$ is given by $\coprod_{d=0}^{2g-3} D_{g,n}^d$.

 When $n=0$ the set $D_{g,0}^d$ contains $1$ element, and by Remark~\ref{nomarkedpoints} the nondegenerate $d$'s are those that satisfy $\gcd(d+1-g, 2g-2) = 1$.

 When $n \geq 1$ we are left to study the action of the free rank $n- \delta_{1,g}$ abelian group generated by the sections (that are distinct from $\Sigma_1$ when $g=1$). If $\Sigma$ is any such section, translation by $\deg(\Sigma)$ identifies $D_{g,n}^d$ with $D_{g,n}^{d+1}$. Modulo $\omega_{\pi}$ (or modulo $\Sigma_1$ when $g=1$), translation by $(2g-2+ \delta_{1,g}) \cdot \Sigma_j$  identifies any point $(x_{1+\delta_{1,g}}, \ldots, x_n) \in D_{g,n}^{0}$ with $(x_{1+\delta_{1,g}}, \ldots, x_j + (2g-2+ \delta_{1,g}), \ldots,  x_n)$. This concludes the proof.
\end{proof}
The orbits of the $\mu$-action of $\widetilde{\pr}_{g,n}$ on $\mathcal{P}_{g,n}$ can be read off from the action of $\mathbb{Z}/2 \mathbb{Z} = \widetilde{\pr}_{g,n} / \picrel_{g,n}(\mathbb{Z})$ on the collection of polytopes in the fundamental domains that we exhibited in Lemma~\ref{fundamentaldomain}.

\begin{corollary} \label{whentransitive} The action $\mu$ of $\widetilde{\pr}_{g,n}$ on the set $\mathcal{P}_{g,n}$ of stability polytopes
\begin{enumerate}
\item has finitely many orbits;
\item is free if and only if $g\geq 3$, or $g$ and $n \geq 2$, or $g=1$ and $n \geq 3$;
\item is transitive if and only if $g=0$, or $(g,n)$ belongs to the set \begin{equation} \label{list} \{ (1,1), (1,2), (1,3), (2,0), (2,1), (3,0), (4,0)\}. \end{equation}
\end{enumerate}
\end{corollary}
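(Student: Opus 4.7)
The plan is to analyze each statement by first reducing the action of $\widetilde{\pr}_{g,n}$ to a finite combinatorial problem, via the fundamental domains provided by Lemma~\ref{fundamentaldomain}, and then studying the induced $\mathbb{Z}/2\mathbb{Z}$-action on a finite quotient.

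For part~(1), finiteness of orbits, I would combine two ingredients. By Lemma~\ref{fundamentaldomain}, the subgroup $\picrel_{g,n}(\mathbb{Z}) \subseteq \widetilde{\pr}_{g,n}$ admits a bounded fundamental domain on $\coprod_{d\in\mathbb{Z}} V_{g,n}^d$. By Remark~\ref{locallyfinite2}, the stability hyperplane arrangement is locally finite, so any bounded subset of $\coprod_d V_{g,n}^d$ meets only finitely many stability polytopes. Hence $\mathcal{P}_{g,n}/\picrel_{g,n}(\mathbb{Z})$ is finite, and since $[\widetilde{\pr}_{g,n} : \picrel_{g,n}(\mathbb{Z})] = 2$, the quotient $\mathcal{P}_{g,n}/\widetilde{\pr}_{g,n}$ is also finite.

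For parts~(2) and~(3), the unifying observation is that the $\picrel_{g,n}(\mathbb{Z})$-action on $\mathcal{P}_{g,n}$ is already free: by Theorem~\ref{pic=stab} the multidegree map $\deg$ is injective on $\picrel_{g,n}(\mathbb{Z})$, and stability polytopes are bounded (Remark~\ref{rationalbounded2}), so a nonzero translation cannot fix one. Thus both freeness and transitivity of the full $\widetilde{\pr}_{g,n}$-action are controlled by the induced $\mathbb{Z}/2\mathbb{Z} = \widetilde{\pr}_{g,n}/\picrel_{g,n}(\mathbb{Z})$-action on the finite set $\mathcal{P}_{g,n}/\picrel_{g,n}(\mathbb{Z})$, which I identify with the stability polytopes in the fundamental domain described by Lemma~\ref{fundamentaldomain}. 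Concretely, an element $(L,1) \in \widetilde{\pr}_{g,n}$ fixes (the orbit of) a polytope $P \subset V_{g,n}^d$ if and only if $P$ is centrally symmetric about $\deg(L)/2$, where $\deg(L) \in V_{g,n}^{2d}$ — so one must determine which centers of stability polytopes lie in the half-lattice image of $\picrel_{g,n}^{2d}(\mathbb{Z})$ inside $C_{g,n} \oplus D_{g,n}$.

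Using the decomposition $V_{g,n}^d \cong C_{g,n} \oplus D_{g,n}$ of Corollary~\ref{isoalphan} and the product description of polytopes from Theorem~\ref{stabilityspace} (unit hypercube in $C_{g,n}$ times a chamber of the arrangement~\eqref{otherwalls} in $D_{g,n}$), I would proceed case-by-case:
\begin{itemize}
\item For part~(3), when $g=0$ the fundamental domain is a single unit cube in $C_{0,n}$ containing one polytope, so transitivity is immediate. For the six exceptional pairs in~\eqref{list}, I would verify directly that the fundamental domain contains one or two polytopes swapped by the involution (using Lemma~\ref{fundamentaldomain} cases (2)--(3) to count). For all other $(g,n)$, I would exhibit in the fundamental domain two stability polytopes whose $D_{g,n}$-projections are not related by the induced $\mathbb{Z}/2$-action, using the combinatorial structure of~\eqref{otherwalls}.
\item For part~(2), freeness reduces to verifying that no polytope centered in the fundamental domain is centrally symmetric about a half-lattice point; in the ``good'' range ($g,n \geq 2$ or $g=1$, $n\geq 3$) the lattice image of $\picrel_{g,n}^{2d}(\mathbb{Z})$ in $C_{g,n}\oplus D_{g,n}$ misses all such centers, while in the remaining cases I would exhibit an explicit line bundle $L$ such that $(L,1)$ stabilizes a specific polytope. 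For instance, when $g=2$ and $n=0$ the stability space $V_{2,0}^2$ is a single point fixed by $(2\omega_\pi, 1)$, and when $g=1, n\leq 2$ the one-dimensional chambers of $D_{g,n}$ are unit intervals stabilized by integer-translated involutions.
\end{itemize}

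The main obstacle will be the bookkeeping for part~(2): one must identify precisely the image of $\picrel_{g,n}^{2d}(\mathbb{Z})$ in $C_{g,n}\oplus D_{g,n}$ (via the basis computation from formula~\eqref{changebasis} in the proof of Theorem~\ref{stabilityspace}) and compare its half-lattice with the centers of polytopes of the arrangement~\eqref{otherwalls}, cleanly separating the cases where the degeneracy condition of Remark~\ref{nomarkedpoints} interacts with the existence of a line bundle of degree $2d$.
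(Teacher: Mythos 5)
Your proposal follows essentially the same route as the paper: Lemma~\ref{fundamentaldomain} reduces all three claims to the residual $\mathbb{Z}/2\mathbb{Z}$-action on the finitely many polytopes contained in a fundamental domain (chosen, as the paper does, so that it is a union of closed polytopes, which is possible by the explicit equations of Theorem~\ref{stabilityspace}), and the statements are then read off case by case from those equations. Your supplementary details --- freeness of the translation action via injectivity of $\deg$ on $\picrel_{g,n}(\mathbb{Z})$ plus boundedness of the polytopes, and the central-symmetry criterion for an element $(L,1)$ to fix a polytope --- are correct; the only slip is that the exceptional list~\eqref{list} contains seven pairs, not six.
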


\begin{proof} We apply Lemma~\ref{fundamentaldomain}, choosing the fundamental domain to be a hyperrectangle that is equal the union of (closed) stability polytopes (that this can be done follows from the equations of the stability subspaces of Theorem~\ref{stabilityspace}). This reduces our claims to studying the action of $\mathbb{Z}/2 \mathbb{Z}$ on the set of polytopes in this hyperrectangle. All three claims follow then from the explicit description of the stability subspaces given in  Theorem~\ref{stabilityspace}.
  \end{proof}

Here is the main result of this section.
\begin{corollary} \label{noisocommuting}
 For fixed $(g,n)$, there exist finitely many isomorphism classes over $\Mb_{g,n}$ of $\Jb_{g,n}(\phi)$ for all integers $d$ and all nondegenerate $\phi \in V_{g,n}^d$.	When $g>0$ and the pair $(g,n)$ does not belong to~\eqref{list},
 there exist nondegenerate $\phi_1 \in V_{g,n}^{e_1}$ and $\phi_2 \in V_{g,n}^{e_2}$ such that $\Jb_{g, n}(\phi_1)$ is not isomorphic to $\Jb_{g, n}(\phi_2)$ over $\Mb_{g, n}$.
\end{corollary}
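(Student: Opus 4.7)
The plan is to identify isomorphism classes of $\Jb_{g,n}(\phi)$ over $\Mb_{g,n}$ (as $\phi$ varies over all nondegenerate stability parameters in $\coprod_{d \in \mathbb{Z}} V_{g,n}^{d}$) with orbits of the $\mu$-action of $\widetilde{\pr}_{g,n}$ on $\mathcal{P}_{g,n}$, and then read off both statements directly from Corollary~\ref{whentransitive}.

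First I would argue that the rule $\phi \mapsto \Jb_{g,n}(\phi)$ induces a well-defined surjection from $\mathcal{P}_{g,n}/\widetilde{\pr}_{g,n}$ onto the set of isomorphism classes over $\Mb_{g,n}$. Surjectivity is tautological. To see that the map is well-defined, recall that by definition $\phi_{1}$ and $\phi_{2}$ in the same stability polytope define the same stability condition and hence yield equal stacks $\Jb_{g,n}(\phi_{1}) = \Jb_{g,n}(\phi_{2})$; moreover, for each $(L,t) \in \widetilde{\pr}_{g,n}$ and each nondegenerate $\phi$, Lemma~\ref{key} produces an isomorphism $\psi(L,t) \colon \Jb_{g,n}(\phi) \to \Jb_{g,n}(\lambda(L,t)(\phi))$ that commutes with the forgetful maps to $\Mb_{g,n}$. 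Since $\lambda(L,t)$ sends stability polytopes to stability polytopes (this is built into the definition of $\mu$), the map factors through $\mathcal{P}_{g,n}/\widetilde{\pr}_{g,n}$.

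Next I would show that this map is also injective, which is the content of Lemma~\ref{isoiffsameorbit}: any birational morphism $\Jb_{g,n}(\phi_{1}) \to \Jb_{g,n}(\phi_{2})$ commuting with the forgetful maps is of the form $\psi(L,t)$ for some $(L,t) \in \widetilde{\pr}_{g,n}$, and in particular $\mathcal{P}(\phi_{2}) = \mu(L,t)(\mathcal{P}(\phi_{1}))$. Therefore, the set of isomorphism classes over $\Mb_{g,n}$ of fine compactified universal Jacobians $\Jb_{g,n}(\phi)$ is in natural bijection with $\mathcal{P}_{g,n}/\widetilde{\pr}_{g,n}$.

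The two assertions of the corollary now follow immediately from Corollary~\ref{whentransitive}. Part (1) of Corollary~\ref{whentransitive} states that the action $\mu$ has finitely many orbits, proving the first sentence of the statement. Part (3) of Corollary~\ref{whentransitive} states that, when $g > 0$ and $(g,n)$ does not belong to the list~\eqref{list}, the action $\mu$ fails to be transitive, so $\mathcal{P}_{g,n}/\widetilde{\pr}_{g,n}$ contains at least two elements; picking nondegenerate $\phi_{1} \in V_{g,n}^{e_{1}}$ and $\phi_{2} \in V_{g,n}^{e_{2}}$ whose stability polytopes lie in distinct orbits yields the desired non-isomorphic $\Jb_{g,n}(\phi_{1})$ and $\Jb_{g,n}(\phi_{2})$. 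The main conceptual step is really the identification of isomorphism classes with $\mathcal{P}_{g,n}/\widetilde{\pr}_{g,n}$, which has already been carried out in Lemma~\ref{isoiffsameorbit}; no further obstacle appears to remain beyond invoking the preceding results.
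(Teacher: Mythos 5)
Your proposal is correct and follows essentially the same route as the paper: the paper's proof likewise combines Lemma~\ref{key} and Lemma~\ref{isoiffsameorbit} to identify isomorphism classes over $\Mb_{g,n}$ with orbits of the $\mu$-action of $\widetilde{\pr}_{g,n}$ on $\mathcal{P}_{g,n}$, and then invokes parts (1) and (3) of Corollary~\ref{whentransitive} for finiteness and non-transitivity respectively. Your write-up merely spells out the well-definedness and injectivity of the orbit-to-isomorphism-class map in slightly more detail than the paper does.
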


\begin{proof}
By Lemma~\ref{key} and Lemma~\ref{isoiffsameorbit}, an isomorphism over $\Mb_{g,n}$ exists if and only $\mathcal{P}(\phi_1)$ and $\mathcal{P}(\phi_2)$ belong to the same orbit of the $\mu$-action of  $\widetilde{\pr}_{g,n}$ on $\mathcal{P}_{g,n}$. The first claim then follows from the first part of Corollary~\ref{whentransitive}, and the second claim follows from the third part of the same Corollary.
\end{proof}

When the coarse moduli scheme $\overline{M}_{g,n}$ is a variety of general type,  we can deduce that two $\phi$-compactified universal Jacobans $\Jb_{g, n}(\phi_1)$ and $\Jb_{g, n}(\phi_2)$ as in Corollary~\ref{noisocommuting} are in fact non-isomorphic as Deligne--Mumford stacks (and not just over $\Mb_{g,n}$). To prove this, we will employ the following lemma, in which we exploit the birational uniqueness of the Iitaka fibration, arguing similarly to \cite[Theorem~7.3]{bfv}.
\begin{lemma} \label{isomustcommute}
If the Kodaira dimension $\kappa(\overline{M}_{g,n})$ equals $3g-3+n$, any isomorphism $\Jb_{g, n}(\phi_1) \to \Jb_{g, n}(\phi_2)$ commutes with the forgetful map to $\Mb_{g,n}$ up to an automorphism that permutes the marked points.
\end{lemma}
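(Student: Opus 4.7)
The plan is to mimic the argument of \cite[Theorem~7.3]{bfv} by exploiting the birational uniqueness of the Iitaka fibration. Let $\alpha \colon \Jb_{g,n}(\phi_1) \to \Jb_{g,n}(\phi_2)$ be an isomorphism. Passing to the underlying coarse moduli spaces $J_1, J_2$ (which are projective varieties, as $\Jb_{g,n}(\phi_i)$ are proper by Corollary~\ref{Cor: JbExists}), and writing $\pi_i \colon J_i \to \overline{M}_{g,n}$ for the forgetful morphisms (which have $g$-dimensional abelian generic fibers, namely the Jacobians of smooth pointed curves), the goal is to show that $\pi_2 \circ \alpha = \sigma \circ \pi_1$ for some $\sigma$ in the image of $S_n \hookrightarrow \mathrm{Aut}(\overline{M}_{g,n})$.

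The key step is to identify $\pi_i$ with the Iitaka fibration of $J_i$, up to birational equivalence. For this I would first compute $\kappa(J_i) = 3g-3+n = \kappa(\overline{M}_{g,n})$. The inequality $\kappa(J_i) \geq \kappa(\overline{M}_{g,n})$ is obtained by pulling back a big canonical divisor from $\overline{M}_{g,n}$ to a resolution of $J_i$, using the flatness of $\pi_i$. For the reverse inequality, observe that the restriction of $\omega_{J_i}^{\otimes m}$ to a generic abelian fiber is trivial, so any pluricanonical section is fiberwise constant and hence descends: the sheaf $\pi_{i,\ast} \omega_{J_i}^{\otimes m}$ has generic rank one, and $H^0(J_i, \omega_{J_i}^{\otimes m}) = H^0(\overline{M}_{g,n}, \pi_{i,\ast} \omega_{J_i}^{\otimes m})$, bounding the Iitaka dimension of $J_i$ by $\dim \overline{M}_{g,n}$. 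Combined with the fact that the generic fiber has Kodaira dimension zero, the standard characterization of the Iitaka fibration shows that $\pi_i$ \emph{is} the Iitaka fibration of $J_i$, up to birational equivalence.

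Applying the uniqueness of the Iitaka fibration to both $\pi_1$ and $\pi_2 \circ \alpha$ (two Iitaka fibrations of $J_1$) produces a birational self-map $\beta \colon \overline{M}_{g,n} \dashrightarrow \overline{M}_{g,n}$ satisfying $\pi_2 \circ \alpha = \beta \circ \pi_1$. To conclude, I would invoke the known result that the birational automorphism group of $\overline{M}_{g,n}$ is the symmetric group $S_n$ (acting by permutation of marked points) whenever $\overline{M}_{g,n}$ is of general type --- trivially true for $n=0$, and established for positive $n$ by Massarenti--Mella. The main obstacle is the upper bound $\kappa(J_i) \leq 3g-3+n$: one must carefully handle the fact that $J_i$ is typically singular, and that the fibers of $\pi_i$ over the boundary $\partial \overline{M}_{g,n}$ are degenerate. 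Properness and flatness of $\pi_i$ from Corollary~\ref{Cor: JbExists}, together with a resolution of singularities compatible with the forgetful morphism, should make the descent of pluricanonical sections rigorous.
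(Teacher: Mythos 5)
Your overall strategy --- pass to the coarse spaces, show the forgetful morphisms are Iitaka fibrations, and use birational uniqueness of the Iitaka fibration --- is the same as the paper's, but two steps have genuine gaps as written. First, the lower bound $\kappa(\overline{J}_{g,n}(\phi_i)) \geq \kappa(\overline{M}_{g,n})$ does not follow from ``pulling back a big canonical divisor, using flatness'': for a fibration $\pi \colon X \to Y$ one has $K_X = \pi^*K_Y + K_{X/Y}$, and without positivity of the relative canonical the pullback of $K_Y$ says nothing about $\kappa(K_X)$; the inequality $\kappa(X) \geq \kappa(Y) + \kappa(F)$ is precisely Iitaka's subadditivity conjecture $C_{n,m}$, which is not formal even when the general fiber has $\kappa = 0$. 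The paper fills this exactly here by invoking the case of the conjecture proved for fibrations with abelian general fiber \cite{ueno2} (alternatively one could use Viehweg's theorem, since the base is of general type). Your upper-bound sketch is fine --- it is the ``easy addition'' inequality \cite[Theorem~6.12]{ueno} --- and the identification of $\pi_i$ with the Iitaka fibration is \cite[Theorem~6.11]{ueno}.

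Second, your concluding step rests on a result I do not believe exists in the form you cite: Massarenti (and Massarenti--Mella for Hassett spaces) compute the \emph{biregular} automorphism group $\Aut(\overline{M}_{g,n}) \cong S_n$, not the group of birational self-maps; general type only gives that $\operatorname{Bir}(\overline{M}_{g,n})$ is finite (it is the automorphism group of the canonical model, a different birational model), and the case $n=0$ is certainly not ``trivially true'' --- the triviality of birational self-maps of $\overline{M}_{g,0}$ is itself a nontrivial input. The paper avoids the general claim by a two-stage argument: the birational map $\beta$ induced on $\overline{M}_{g,0}$ is the identity by \cite[Lemma~7.4]{bfv}, and then, for a general curve $C$, $\beta$ induces an automorphism of the fiber of $\overline{M}_{g,n} \to \overline{M}_{g,0}$ over $[C]$, which is the Fulton--MacPherson compactification $C[n]$, whose automorphism group is $S_n$ by \cite[Proposition~4.11]{alex}; this also makes visible why the resulting permutation automorphism lifts to the compactified Jacobians. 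Unless you can supply a proof or reference that every birational self-map of $\overline{M}_{g,n}$ of general type is a permutation automorphism, you should replace your last step by this argument.
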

\begin{proof}
We claim that the Kodaira dimension $\kappa (\overline{J}_{g, n}(\phi_i))$ equals $3g-3+n$. In view of Iitaka's easy addition inequality (\cite[Theorem~6.12]{ueno}), we have \[\kappa(\overline{J}_{g, n}(\phi_i)) \leq  \dim(\overline{M}_{g,n}) + \kappa (\pi^{-1}([C, p_i])) \] for a general curve $(C, p_1, \ldots, p_n)$ of $\Mb_{g,n}$. The reverse inequality
\[
\kappa(\overline{J}_{g, n}(\phi_i)) \geq \kappa(\overline{M}_{g,n}) + \kappa (\pi^{-1}([C, p_i])) \quad \textrm{for } (C,p_1, \ldots, p_n) \textrm { general in } \Mb_{g,n}
\]
follows from the Iitaka conjecture for abelian varieties (the main result of \cite{ueno2}). Since we are assuming that $\kappa(\overline{M}_{g,n}) = \dim(\overline{M}_{g,n})$, the claim follows.

The forgetful morphism of coarse moduli schemes $p \colon \overline{J}_{g, n}(\phi_i) \to \overline{M}_{g,n}$ is an algebraic fibration (i.e.~it is surjective and with geometrically connected fibers) of normal varieties with $\kappa (\overline{J}_{g, n}(\phi_i)) = \dim (\overline{M}_{g,n})$,  and the Kodaira dimension of a general fiber of $p$ equals zero, so $p$ is the Iitaka fibration by \cite[Theorem~6.11]{ueno}.

Since the Iitaka fibration is a birational invariant, any isomorphism $\alpha \colon \overline{J}_{g, n}(\phi_1) \to \overline{J}_{g, n}(\phi_2)$ induces a birational map $\beta$ such that the diagram
\begin{displaymath}
	\xymatrix{
\overline{J}_{g, n}(\phi_1) \ar[d] \ar[r]^{\alpha} &\overline{J}_{g, n}(\phi_2)\ar[d]\\
\overline{M}_{g,n} \ar@{-->}
[r]^{\beta}          &\overline{M}_{g,n}}
\end{displaymath}
commutes. To conclude, it is enough to show that $\beta$ extends to an automorphism of $\Mb_{g,n}$ that lifts to an automorphism of $\Jb_{g,n}(\phi_i)$ for $i=1,2$.

The birational map $\beta$ induces a rational map $\overline{M}_{g,0} \dasharrow \overline{M}_{g,0}$, which is the identity by \cite[Lemma 7.4]{bfv}. Therefore, if $C$ is a general curve of $\Mb_{g,0}$, the birational map $\beta$ induces an automorphism of the Fulton-MacPherson compactification $C[n]$ of the configuration space of $n$ points on $C$ (the fiber of $[C]$ under the forgetful map). By \cite[Proposition 4.11]{alex}, the automorphism group of $C[n]$ is the symmetric group on $n$ elements. We deduce that $\beta$ is the automorphism of $\Mb_{g,n}$ induced by a certain permutation of the marked points, and as such it lifts to an automorphism of $\Jb_{g,n}(\phi_i)$. \end{proof}

We conclude this section with the following corollary.

\begin{corollary} \label{nonisomorphic} When the pair $(g,n)$ is such that $\overline{M}_{g,n}$ is of general type, there exist nondegenerate $\phi_1, \phi_2$  such that $\Jb_{g,n}(\phi_1)$  and $\Jb_{g,n}(\phi_2)$ are non-isomorphic.
\end{corollary}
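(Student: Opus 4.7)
The plan is to combine Corollary~\ref{noisocommuting} with Lemma~\ref{isomustcommute}. Assume $\Mb_{g,n}$ is of general type, so $\kappa(\Mb_{g,n})=3g-3+n$ and Lemma~\ref{isomustcommute} is available. Every pair in the list~\eqref{list} has $g\le 4$ and gives an $\Mb_{g,n}$ that is classically known to be uniruled (indeed unirational in each listed case), hence not of general type. Thus $(g,n)$ lies outside \eqref{list}, and Corollary~\ref{noisocommuting} produces nondegenerate $\phi_1,\phi_2$ with $\Jb_{g,n}(\phi_1)\not\cong\Jb_{g,n}(\phi_2)$ over $\Mb_{g,n}$.

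If $n=0$ this already concludes the proof: Lemma~\ref{isomustcommute} forces any stack-level isomorphism to commute with the forgetful map to $\Mb_{g,0}$ (no marked points to permute), contradicting the choice of $\phi_1,\phi_2$. Suppose $n\ge 1$. Lemma~\ref{isomustcommute} allows a stack isomorphism $\Jb_{g,n}(\phi_1)\to\Jb_{g,n}(\phi_2)$ to be compatible with the forgetful map only after a relabelling $\sigma\in S_n$ of the marked points. Such a relabelling induces a canonical isomorphism $\Jb_{g,n}(\phi_2)\cong \Jb_{g,n}(\sigma^*\phi_2)$ over $\Mb_{g,n}$, so it suffices to produce $\phi_1,\phi_2$ whose stability polytopes lie in distinct orbits of the enlarged group $\widetilde{\pr}_{g,n}\rtimes S_n$ acting on $\mathcal{P}_{g,n}$.

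The cleanest way to arrange this is to further demand that $\phi_1,\phi_2$ be $S_n$-invariant; then $\sigma^*\phi_i=\phi_i$ for every $\sigma$, and the obstruction collapses onto the non-isomorphism over $\Mb_{g,n}$ already provided by Corollary~\ref{noisocommuting}. Using Corollary~\ref{isoalphan} and Theorem~\ref{stabilityspace}, the $S_n$-fixed subspace of $V_{g,n}^d$ is an affine subspace cut out by equalities among the $D_{g,n}$-coordinates (the diagonal $x_{1+\delta_{1,g}}=\dots=x_n$) together with a natural $S_n$-symmetric slice of $C_{g,n}$; inspection of the hyperplane equations \eqref{wallsct} and \eqref{otherwalls} shows this fixed subspace is not contained in any stability subspace, so it meets the interior of some stability polytope. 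Combining this with the translation action of $\picrel_{g,n}(\mathbb{Z})$ on the fundamental domains of Lemma~\ref{fundamentaldomain} lets one exhibit two $S_n$-symmetric nondegenerate parameters in distinct $\widetilde{\pr}_{g,n}$-orbits (e.g.~translate an $S_n$-symmetric base polytope by a generator of $\picrel_{g,n}(\mathbb{Z})$ lying outside $W_{g,n}$ and projecting nontrivially to $D_{g,n}$). The main obstacle I anticipate is the final combinatorial check: verifying that the sign involution $L\mapsto L^{-1}$ does not collapse the two chosen symmetric polytopes into a single orbit. This requires a careful use of Theorem~\ref{stabilityspace} in tandem with the explicit generators of $\widetilde{\pr}_{g,n}$ provided by Fact~\ref{arbarellocornalba}.
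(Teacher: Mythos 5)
Your skeleton is the paper's: general type excludes $g=0$ and the pairs in \eqref{list} (all uniruled), so Corollary~\ref{noisocommuting} supplies nondegenerate $\phi_1,\phi_2$ with $\Jb_{g,n}(\phi_1)\not\cong\Jb_{g,n}(\phi_2)$ over $\Mb_{g,n}$, and Lemma~\ref{isomustcommute} is then invoked; your $n=0$ case is exactly the paper's argument. For $n\ge 1$ the paper's intended deduction is shorter than yours because it uses the full content of (the proof of) Lemma~\ref{isomustcommute}: the permutation automorphism $\beta$ of $\Mb_{g,n}$ is shown there to lift to an automorphism of $\Jb_{g,n}(\phi_i)$, so composing a hypothetical stack isomorphism with the inverse of this lift gives an isomorphism commuting with the forgetful maps, which contradicts the choice made via Corollary~\ref{noisocommuting}; no symmetric stability parameters enter.

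Your alternative handling of the permutation is where the proposal has a genuine gap. You need two $S_n$-invariant nondegenerate parameters whose polytopes lie in distinct $\widetilde{\pr}_{g,n}$-orbits; Corollary~\ref{noisocommuting} does not provide symmetric parameters, so this is a new claim, and you leave its decisive step (the effect of the involution $L\mapsto L^{-1}$) unverified by your own admission. Moreover, the recipe you sketch cannot produce what you want: translating a polytope by $\deg(L)$ for $L$ a generator of $\picrel_{g,n}(\mathbb{Z})$ is precisely the action $\lambda(L,0)$ of \eqref{lambda}, so the translate lies in the \emph{same} $\widetilde{\pr}_{g,n}$-orbit by definition; to separate orbits you would instead have to exhibit two symmetric polytopes inside a single fundamental domain of Lemma~\ref{fundamentaldomain} that are not exchanged by the residual $\mathbb{Z}/2\mathbb{Z}$, which is exactly the check you defer. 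Finally, ``inspection'' of \eqref{otherwalls} does not immediately yield nondegenerate symmetric parameters: on the diagonal $x_{1}=\dots=x_{n}$ (take $g\ge 2$) the wall $W_D(\ell,S,k)$ becomes $\bigl(|S|-\tfrac{\ell n}{2g-2}\bigr)x=k-\tfrac{\ell(d+1-g)}{2g-2}$, and whenever $|S|(2g-2)=\ell n$ with $\tfrac{\ell(d+1-g)}{2g-2}\in\mathbb{Z}$ the whole diagonal is contained in that wall; so the existence of such parameters depends on $d$ and needs an actual argument, not inspection. As written, then, the $n\ge 1$ case of your proof is incomplete.
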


\begin{proof}
It is well-known that when $g=0$ and when $(g,n)$ belongs to~\eqref{list}, the moduli scheme $\overline{M}_{g,n}$ is uniruled, in particular it is not of general type. By combining this observation with Corollary~\ref{noisocommuting} and Lemma~\ref{isomustcommute}, we deduce the statement.
\end{proof}

\begin{remark} We believe that the problem of determining all pairs $(g,n)$ such that $\overline{M}_{g,n}$ is of general type is still open. A well-known sufficient condition for $\overline{M}_{g,n}$ to be of general type is that $g \geq 24$. This was proven by Eisenbud-Harris-Mumford in \cite{eh} and \cite{hm} when $n=0$.  For $n>0$ this follows from loc.~cit.~and from the Iitaka conjecture for curve fibrations, which was proven by Viehweg in \cite{vie}.
\end{remark}

\begin{remark} \label{fixedd} The main results of this section are concerned with the problem of classifying isomorphism classes of fine compactified universal Jacobians $\Jb_{g,n}(\phi)$ for varying $\phi$ in $V_{g,n}^d$ and varying $d \in \mathbb{Z}$. The classification problem is similar when $d$ is  fixed.

More precisely, we claim that the statements of Corollaries~\ref{noisocommuting} and~\ref{nonisomorphic} remain valid for fixed $d$ provided $n \geq 1$. (When $n=0$, for fixed $d$ there is $1$ fine compactified universal $\Jb_{g,n}(\phi)$ when $\gcd(d+1-g, 2g-2)=1$ and there is none otherwise, see Remark~\ref{nomarkedpoints}).


Indeed, let $\widetilde{\pr}_{g,n}^{0}$ be the stabilizer of the set $V_{g,n}^0$ for the $\lambda$-action of $\widetilde{\pr}_{g,n}$ on $V_{g,n}$. Then the $\mu$-action of $\widetilde{\pr}_{g,n}$ on the collection of all stability polytopes $\mathcal{P}_{g,n}$ restricts to an action $\mu_d$ of $\widetilde{\pr}_{g,n}^0$ on the collection $\mathcal{P}^d_{g,n}$ of stability polytopes in $V_{g,n}^d$. We have already observed that, as a consequence of Lemma~\ref{isoiffsameorbit}, the quotient set $\mathcal{P}_{g,n}/ \widetilde{\pr}_{g,n}$ classifies fine compactified universal Jacobians $\Jb_{g,n}(\phi)$ modulo isomorphisms over $\Mb_{g,n}$ for an arbitrary total degree $d$. By the same argument, for every fixed $d \in \mathbb{Z}$ the quotient set $\mathcal{P}^d_{g,n}/ \widetilde{\pr}^0_{g,n}$ classifies degree-$d$ fine compactified universal Jacobians $\Jb_{g,n}(\phi)$ modulo isomorphisms over $\Mb_{g,n}$.

There is a well-defined inclusion \[\tau \colon \mathcal{P}^d_{g,n}/\widetilde{\pr}^0_{g,n} \to \mathcal{P}_{g,n} / \widetilde{\pr}_{g,n}.\] Our claim follows from the fact that, when $n \geq 1$, the map $\tau$ is surjective.  Indeed, let $e \in \mathbb{Z}$ and $\phi \in V_{g,n}^e$ nondegenerate. The polytope $\mathcal{P}(\phi) \subset V_{g,n}^e$ is equal to the translate of the polytope \[\mathcal{P}(\phi - (e-d) \deg(\Sigma)) \subset V_{g,n}^d\] by $(e-d) \deg(\Sigma)$, for $\deg(\Sigma)$ the multidegree of any section $\Sigma \colon \Mb_{g,n} \to \Cb_{g,n}$. This concludes the proof of surjectivity and of our claim.
\end{remark}

\section{Appendix: Properties of reflexive sheaves} \label{Section: Appendix}
Here we collect some results about reflexive sheaves that we use in  Section~\ref{final}.  The conditions $G_n$, $R_n$, and $S_n$ we discuss are taken from \cite{hartshorne}.

\begin{lemma} \label{Lemma: CommAlgebra}
	Let $f \colon \mathcal{X} \to \mathcal{S}$ be a family of curves over a regular Deligne--Mumford stack $\mathcal{S}$ and $F$ a family of torsion-free sheaves on $\mathcal{X}$.  If $\mathcal{X}$ satisfies conditions $G_1$ and $S_2$, then $F$ is reflexive.
\end{lemma}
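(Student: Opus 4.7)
The plan is to reduce the statement to Hartshorne's reflexivity criterion \cite[Theorem~1.9]{hartshorne}: on a noetherian scheme satisfying $G_1$ and $S_2$, a coherent sheaf is reflexive if and only if it is both torsion-free and satisfies $S_2$. Passing to an \'etale chart of $\mathcal{X}$ (reflexivity, $G_1$, $S_2$, and torsion-freeness are all \'etale-local properties), it suffices to verify that $F$ is torsion-free and satisfies $S_2$ on such a chart.

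The main step is to verify $S_2$, and I would in fact establish the stronger property that $F$ is Cohen--Macaulay on $\mathcal{X}$. For $x \in \mathcal{X}$ lying over $s = f(x) \in \mathcal{S}$, the flatness of $F$ over $\mathcal{S}$ together with the standard depth formula for flat local homomorphisms yields
\[
\operatorname{depth}_{\mathcal{O}_{\mathcal{X}, x}} F_x = \operatorname{depth} \mathcal{O}_{\mathcal{S}, s} + \operatorname{depth}_{\mathcal{O}_{X_s, x}} (F|_{X_s})_{x},
\]
together with the additivity identity $\dim \mathcal{O}_{\mathcal{X}, x} = \dim \mathcal{O}_{\mathcal{S}, s} + \dim \mathcal{O}_{X_s, x}$ (valid by flatness of $f$ and equidimensionality of the fibers). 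Since $\mathcal{S}$ is regular, the first summand on the right-hand side equals $\dim \mathcal{O}_{\mathcal{S}, s}$. Since $F|_{X_s}$ is a rank-one torsion-free sheaf on the fiber curve $X_s$, its depth at $x$ equals $\dim \mathcal{O}_{X_s, x}$: both are $0$ at the generic point of a component of $X_s$, and both are $1$ at a closed point (the latter because a torsion-free sheaf on a one-dimensional local ring has positive depth at the maximal ideal). Adding these, $\operatorname{depth} F_x = \dim F_x$ at every $x$, so $F$ is Cohen--Macaulay and, in particular, satisfies $S_2$.

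Torsion-freeness of $F$ is then essentially automatic. Since $F$ has rank~$1$ on $\mathcal{X}$, its support is all of $\mathcal{X}$, and so the $S_1$ property just established forces the associated primes of $F$ to lie among the minimal primes of $\mathcal{O}_{\mathcal{X}}$. The $S_2$ hypothesis on $\mathcal{X}$ means $\mathcal{O}_{\mathcal{X}}$ itself has no embedded associated primes, so the elements outside the associated primes of $\mathcal{O}_{\mathcal{X}}$ coincide with the elements outside the associated primes of $F$, and these therefore act injectively on $F$. Thus $F$ is torsion-free, and Hartshorne's criterion applies to give the reflexivity of $F$. The only substantive obstacle is the flat depth-formula computation in the middle paragraph; once that is in place, the rest is bookkeeping.
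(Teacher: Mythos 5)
Your proposal is correct and follows essentially the same route as the paper: both reduce, via Hartshorne's Theorem~1.9, to checking that $F$ satisfies $S_2$, and both get the needed depth bound by combining regularity of the base (flatness makes a regular system of parameters of $\mathcal{O}_{\mathcal{S},s}$ into an $F_x$-regular sequence) with depth $\geq 1$ on the torsion-free fibers --- the paper simply writes out this regular sequence of length $\dim \mathcal{O}_{\mathcal{S},s}+1$ by hand where you invoke the packaged depth formula for flat local homomorphisms. Your closing paragraph verifying torsion-freeness of $F$ is harmless extra bookkeeping that the paper leaves implicit.
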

\begin{proof}
	By \cite[Theorem~1.9]{hartshorne}, it is enough to show that $F$ satisfies $S_2$.  In other words, we need to show that if $x \in \mathcal{X}$, then $\operatorname{depth} F_{x} \ge \operatorname{min}(2, \operatorname{dim} \mathcal{O}_{\mathcal{X}, x})$. Given $x$, set $s := f(x)$.  By hypothesis, $\mathcal{O}_{\mathcal{S}, s}$ is regular so its maximal ideal $\mathfrak{m}_{s}$ is generated by a regular sequence $a_1, \ldots, a_d$ of length $d := \operatorname{dim} \mathcal{O}_{S, s}$.  The images $f^{*}(a_1), \ldots, f^{*}(a_d) \in \mathcal{O}_{X, x}$ are regular on $F_{x}$ by flatness, and the quotient module $F_{x}/ f^{*}(a_1) \cdot F_{x} + \ldots + f^{*}(a_d) \cdot F_{x}$ is torsion-free by hypothesis.  Pick an element $b \in \mathcal{O}_{X, x}$ that acts as a nonzero divisor on this quotient module.  Then $f^{*}(a_1), \ldots, f^{*}(a_d), b$ is a $F_{x}$-regular sequence, so $\operatorname{depth} F_{x} \ge d+1 = \dim \mathcal{O}_{\mathcal{X}, x}$.
\end{proof}

\begin{corollary} \label{Cor: determinedcod2}
	With the  hypothesis of Lemma~\ref{Lemma: CommAlgebra}, if $G$ is a second family of rank~$1$ torsion-free sheaves on $\mathcal{X}$ and $\mathcal{Y} \subsetneq \mathcal{X}$ is a closed substack of codimension $\ge 2$ such that $F|_{\mathcal{X} - \mathcal{Y}}$ is isomorphic to $G|_{\mathcal{X}-\mathcal{Y}}$, then $F$ is isomorphic to $G$.
\end{corollary}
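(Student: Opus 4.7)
The plan is to deduce the corollary from Lemma~\ref{Lemma: CommAlgebra} together with the standard fact that reflexive sheaves on a $G_1$, $S_2$ scheme/stack are determined up to unique isomorphism by their restriction to the complement of any closed substack of codimension at least~$2$. All the work will be to reduce this statement about stacks to the corresponding statement on a smooth atlas, after which it is an immediate application of the commutative-algebra input already established in the previous lemma.

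First I would apply Lemma~\ref{Lemma: CommAlgebra} to conclude that both $F$ and $G$ are reflexive $\mathcal{O}_{\mathcal{X}}$-modules, and so is the internal hom sheaf $\ShHom_{\mathcal{O}_{\mathcal{X}}}(F,G)$ (since $G$ is $S_2$ and a $\Hom$ into an $S_2$ sheaf over a $G_1$, $S_2$ base is again $S_2$; this is contained in \cite[Theorem~1.9]{hartshorne} applied locally on an atlas). Let $j\colon \mathcal{X}-\mathcal{Y} \hookrightarrow \mathcal{X}$ denote the open immersion. The key claim is that for any reflexive sheaf $H$ on $\mathcal{X}$, the natural map $H \to j_{*} j^{*} H$ is an isomorphism.

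To establish this claim I would pass to a smooth atlas $U \to \mathcal{X}$; since the properties $G_1$, $S_2$, codimension, reflexivity, and the formation of $j_*$ are all local in the smooth topology on $\mathcal{X}$, the claim reduces to the analogous statement on the scheme $U$, where it is classical: an $S_2$ coherent sheaf has no sections supported in codimension $\geq 2$, so the restriction map to sections on the complement of a codimension-$2$ closed subscheme is injective, and conversely any section on the complement extends uniquely because its local cohomology obstructions vanish by the $S_2$ property (equivalently, depth~$\geq 2$ along $\mathcal{Y}$).

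Granted the claim, the given isomorphism $\phi\colon F|_{\mathcal{X}-\mathcal{Y}} \xrightarrow{\sim} G|_{\mathcal{X}-\mathcal{Y}}$ is a section of $j_{*} j^{*}\ShHom_{\mathcal{O}_{\mathcal{X}}}(F,G) = \ShHom_{\mathcal{O}_{\mathcal{X}}}(F,G)$, i.e.~a morphism $\widetilde{\phi}\colon F \to G$ on all of $\mathcal{X}$ extending $\phi$. Applying the same reasoning to $\phi^{-1}$ yields an extension $\widetilde{\psi}\colon G \to F$, and the compositions $\widetilde{\psi} \circ \widetilde{\phi}$ and $\widetilde{\phi} \circ \widetilde{\psi}$ restrict to the identity on $\mathcal{X}-\mathcal{Y}$; by the uniqueness of extensions (injectivity part of the claim, applied to $\ShHom(F,F)$ and $\ShHom(G,G)$) they equal the identity globally. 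Thus $\widetilde{\phi}$ is the desired isomorphism $F \cong G$.

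The only real obstacle is the reduction to a smooth atlas for the $j_{*}j^{*} = \mathrm{id}$ statement; once that is set up, everything else is formal. This is genuinely mild here because $\mathcal{S}$ is assumed regular (hence so is any smooth atlas of $\mathcal{X}$ after noting that the $G_1$, $S_2$ hypothesis is preserved), so the usual scheme-level proof goes through verbatim on the atlas.
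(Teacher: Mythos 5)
Your argument is correct, but it takes a different (more self-contained) route than the paper: the paper's proof is a one-line citation of \cite[Theorem~1.12]{hartshorne}, which is exactly the statement that a reflexive sheaf on a scheme satisfying $G_1$ and $S_2$ is determined by its restriction outside a closed subset of codimension $\ge 2$, with the stack case deduced by passing to an \'etale cover. What you have written is essentially the standard proof of that cited theorem: reflexivity of $F$ and $G$ from Lemma~\ref{Lemma: CommAlgebra}, the $S_2$ property of $\ShHom(F,G)$, extension of sections across $\mathcal{Y}$ via depth $\ge 2$ (equivalently vanishing of the relevant local cohomology), and uniqueness of extensions to see that the extended maps are mutually inverse. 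Two small remarks. First, your citation for the $\ShHom$ step is off: \cite[Theorem~1.9]{hartshorne} is the equivalence ``reflexive $\Leftrightarrow$ $S_2$'' on a $G_1$, $S_2$ scheme; the fact you actually use is the standard depth estimate $\operatorname{depth} \Hom(F,G)_x \ge \min(2, \operatorname{depth} G_x)$, which is a different (but well-known) statement, and note that $S_2$ of $\ShHom(F,G)$ is all you need --- reflexivity of the Hom sheaf plays no role. Second, working on a smooth atlas rather than an \'etale cover is harmless, since $S_2$, codimension, and the formation of $j_*$ are all preserved under smooth pullback, and the map $H \to j_*j^*H$ you need to be an isomorphism is already globally defined on the stack, so no descent argument is required. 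The trade-off is the usual one: the paper's citation is shorter, while your version makes the commutative-algebra mechanism explicit and does not rely on the precise form of Hartshorne's statement.
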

\begin{proof}
 	This is a special case of \cite[Theorem~1.12]{hartshorne}. (The result is stated for schemes, and we deduce the statement for stacks by passing to an \'{e}tale cover.)
\end{proof}

\begin{lemma} \label{Lemma: G1S2}
	For any nondegenerate $\phi \in V_{g, n}^d$, the fiber product $\Jb_{g,n}(\phi) \times_{\Mb_{g,n}} \Cb_{g,n}$ satisfies $G_1$ and $S_2$.
\end{lemma}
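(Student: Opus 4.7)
The plan is to verify the (stronger) property that $\Jb_{g,n}(\phi) \times_{\Mb_{g,n}} \Cb_{g,n}$ is Gorenstein and Cohen--Macaulay at every closed point, which implies both $G_1$ and $S_2$. Since these properties are \'etale-local and are preserved and reflected by completion of local rings, it suffices to examine, at each closed point $(p,x)$ lying over some $[C]\in \Mb_{g,n}$, the completed stalk of the fiber product.

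The case in which $x$ is a smooth point of the fiber of $\pi\colon \Cb_{g,n}\to \Mb_{g,n}$ will be immediate: $\pi$ is smooth at $x$, so by base change the projection $\Jb_{g,n}(\phi) \times_{\Mb_{g,n}} \Cb_{g,n}\to \Jb_{g,n}(\phi)$ is smooth at $(p,x)$. Since $\Jb_{g,n}(\phi)$ is regular by Corollary~\ref{Cor: JbExists}, the fiber product is regular at $(p,x)$, hence Gorenstein and Cohen--Macaulay there.

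The substantive case is when $x$ is a node of its fiber. The standard \'etale-local description of the universal stable curve provides an isomorphism
\[
\widehat{\mathcal{O}}_{\Cb_{g,n}, x} \;\cong\; R[[u,v]]/(uv - t),
\]
where $R:=\widehat{\mathcal{O}}_{\Mb_{g,n}, [C]}$ and $t\in \mathfrak{m}_R$ is a regular parameter (the local equation of the boundary divisor whose generic point parameterizes curves retaining this node). Writing $S:=\widehat{\mathcal{O}}_{\Jb_{g,n}(\phi), p}$ and letting $\sigma\colon R\to S$ denote the structural homomorphism, the completed stalk of the fiber product at $(p,x)$ is $S[[u,v]]/(uv - \sigma(t))$. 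Flatness of $\Jb_{g,n}(\phi) \to \Mb_{g,n}$ from Corollary~\ref{Cor: JbExists} makes $\sigma$ flat; since $t$ is a nonzerodivisor in $R$, the image $\sigma(t)$ is then a nonzerodivisor in $S$. Regularity of $S$ forces $S[[u,v]]$ to be a regular local ring, in which the nonzero element $uv-\sigma(t)$ is automatically a nonzerodivisor. Consequently $S[[u,v]]/(uv-\sigma(t))$ is a hypersurface in a regular local ring, hence a local complete intersection, hence Gorenstein and Cohen--Macaulay, as desired.

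The only nontrivial input beyond Corollary~\ref{Cor: JbExists} is the standard \'etale-local description of $\Cb_{g,n}\to \Mb_{g,n}$ near a node, which is a well-known consequence of the deformation theory of nodal singularities, so no substantial obstacle is expected.
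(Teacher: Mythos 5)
Your proof is correct, and it reaches a slightly stronger conclusion than the one recorded in the paper. The paper's own argument establishes $R_1$ plus Cohen--Macaulayness: regularity at pairs (line bundle, non-node) gives $R_1$, and for $S_2$ it invokes the deformation-theoretic computation of \cite[Lemma~3.33]{kasspa}, which exhibits the completed local rings of the fiber product as power series rings over completed tensor products of rings $k[[x,y,u,v]]/(xy-uv)$ and $k[[t]]$, hence complete intersections. You instead stratify by whether the point of $\Cb_{g,n}$ is a smooth point or a node of its fiber, and at a node you compute the completed local ring of the fiber product directly as $S[[u,v]]/(uv-\sigma(t))$, using only the $k$-smoothness of $\Jb_{g,n}(\phi)$ from Corollary~\ref{Cor: JbExists} and the standard local structure $R[[u,v]]/(uv-t)$ of the universal curve at a node; a nonzero element of the regular local ring $S[[u,v]]$ is automatically a nonzerodivisor (so your appeal to flatness of $\sigma$ is not even needed), and the quotient is a hypersurface, hence Gorenstein and Cohen--Macaulay everywhere. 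The two arguments rest on the same underlying deformation theory (the smoothness in Corollary~\ref{Cor: JbExists} is itself deduced from \cite[Lemma~3.33]{kasspa}, and your presentation $S[[u,v]]/(uv-\sigma(t))$ recovers the paper's local model when the sheaf fails to be locally free at the node), but your packaging is more self-contained at this point in the paper, avoids re-citing the external lemma, and yields the everywhere-l.c.i.\ (hence Gorenstein) statement, which trivially implies both $G_1$ and $S_2$; the paper's route gets $G_1$ via $R_1$ instead.
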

\begin{proof}
	We will prove the stronger result that the fiber product is Cohen--Macaulay and satisfies $R_1$.  Certainly $\Jb(\phi) \times_{\Mb_{g,n}} \Cb_{g,n}$ is regular at every pair consisting of a line bundle and a point that is not a node (since $\Jb_{g, n}(\phi) \to \Mb_{g,n}$ is smooth at a line bundle and $\Cb_{g, n} \to \Mb_{g, n}$ is smooth at a point that is not a node).  The locus of such pairs has codimension $1$, so we conclude that the fiber product satisfies $R_1$.
	
	To complete the proof, observe that the deformation theory argument in \cite[Lemma~3.33]{kasspa} shows the completed local ring of the fiber product is a power series ring over a ring that is the completed tensor product of rings of the form $k[[x, y, u, v]]/ x y - u v$ or $k[[t]]$.   In particular, the completed local ring is a power series ring over a complete intersection ring and hence is Cohen--Macaulay, i.e.~satisfies $S_{d}$ for all $d$.
\end{proof}

Finally we show that, on a nodal curve, the rule sending a rank~$1$ torsion-free sheaf $F$ to its dual $F^{\vee}$  commutes with base change and hence defines a isomorphism $\Jb_{g, n}(\phi) \to \Jb_{g, n}(-\phi)$.

\begin{lemma} \label{basechange}
	Let $F$ be a family of rank~$1$ torsion-free sheaves on a family $C \to S$ of nodal curves.  Then the formulation of the dual $F^{\vee} := \Hom( F, \mathcal{O}_{C})$ commutes with base change.  In other words, if $T \to S$ is a $k$-morphism, then the natural map
	\[
		F^{\vee} \otimes \mathcal{O}_{C_T} \to (F \otimes \mathcal{O}_{C_T})^{\vee}
	\]
	is an isomorphism.
\end{lemma}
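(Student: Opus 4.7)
The plan is to reduce to a local stalk computation and then exploit the Gorenstein structure of nodal curves via the relative dualizing sheaf. Since the claim is local on $C_T$ and on $T$, I will check that the natural map is an isomorphism at each geometric stalk $c \in C_T$, étale-locally on $C$ and $S$. If $c$ maps to a point $c' \in C$ at which $F$ is locally free---for instance, at every smooth point of every fiber of $C \to S$---then $F^{\vee}$ is locally invertible and the base-change map is tautologically an isomorphism. So the real work lies at a point $c'$ lying over a node of its fiber at which $F$ fails to be locally free.

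The main tool is the relative dualizing sheaf $\omega_{C/S}$. Because nodal curves are Gorenstein, $C \to S$ is a flat Gorenstein morphism, so $\omega_{C/S}$ is an invertible $\mathcal{O}_C$-module whose formation commutes with arbitrary base change. Using the identification
\[
F^{\vee} \;=\; \mathcal{H}om_{\mathcal{O}_C}(F, \omega_{C/S}) \otimes_{\mathcal{O}_C} \omega_{C/S}^{-1}
\]
and the fact that tensoring with an invertible sheaf trivially commutes with base change, the lemma reduces to the analogous base-change statement for the $\omega$-dual $F^{D} := \mathcal{H}om_{\mathcal{O}_C}(F, \omega_{C/S})$.

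For the reduced statement, I will show that $\mathbf{R}\mathcal{H}om_{\mathcal{O}_C}(F, \omega_{C/S})$ is concentrated in degree $0$ locally on $C$ and then appeal to relative Grothendieck duality for the Gorenstein morphism $C \to S$. Fiberwise, a rank~$1$ torsion-free sheaf on a one-dimensional nodal curve is maximal Cohen--Macaulay, and over a Gorenstein local ring, maximal Cohen--Macaulay modules satisfy $\operatorname{Ext}^{>0}$-vanishing into the dualizing module. Concretely, after completing $\mathcal{O}_{C, c'}$ to the standard étale form $\hat{A}[[x,y]]/(xy - f)$ with $\hat{A} := \widehat{\mathcal{O}}_{S, s}$---a flat Gorenstein $\hat{A}$-algebra---the $S$-flatness of $F$ combined with fiberwise maximal Cohen--Macaulay-ness promotes to $\widehat{F}_{c'}$ being maximal Cohen--Macaulay over $\hat{\mathcal{O}}_{C, c'}$ relative to $\hat{A}$. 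This forces $\mathcal{E}xt^{i}(F, \omega_{C/S}) = 0$ for $i > 0$ at $c'$, so $\mathbf{R}\mathcal{H}om(F, \omega_{C/S}) = F^{D}$ is concentrated in degree $0$. Relative Grothendieck duality then yields the required base-change compatibility for $F^{D}$, and hence, by the previous paragraph, for $F^{\vee}$.

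The main obstacle is lifting the fiberwise $\operatorname{Ext}$-vanishing to both vanishing and base-change compatibility in the family without imposing any regularity hypothesis on $S$. The cleanest way to proceed is to work everywhere relative to $S$: since $C \to S$ is a flat Gorenstein morphism, its relative dualizing complex is $\omega_{C/S}$ placed in the correct degree, and relative Grothendieck duality for such morphisms comes equipped with its intrinsic base-change isomorphism, so the vanishing and compatibility are essentially built into the formalism.
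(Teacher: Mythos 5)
Your fiberwise input coincides with the paper's: on a Gorenstein curve a rank~$1$ torsion-free sheaf is maximal Cohen--Macaulay, so its higher $\operatorname{Ext}$'s into a line bundle vanish. The gap is in how you promote this to the family over an arbitrary base $S$. Two assertions carry all the weight and neither is justified. First, you claim that $S$-flatness plus fiberwise MCM-ness ``forces'' the vanishing of the sheaves $\operatorname{Ext}^i_{\mathcal{O}_C}(F,\omega_{C/S})$ for $i>0$ at a node where $F$ is not locally free. But $S$ (hence $\hat{A}$) may be non-reduced or non-Cohen--Macaulay, so $\mathcal{O}_{C,c'}$ need not be CM and $F_{c'}$ need not be MCM over it; ``MCM relative to $\hat A$'' is not a notion from which an $\operatorname{Ext}$-vanishing over $\mathcal{O}_{C,c'}$ follows by a depth count. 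Proving it honestly requires either the explicit $2$-periodic (matrix-factorization) resolution of such a flat family at a node, or a cohomology-and-base-change argument -- i.e.\ exactly the machinery you are trying to avoid. Second, ``relative Grothendieck duality comes equipped with its intrinsic base-change isomorphism'' does not deliver what you need: duality is the adjunction between $Rf_*$ and $f^!$, and by itself it says nothing about compatibility of $\Hom_{\mathcal{O}_C}(F,\omega_{C/S})$ with base change along $T\to S$. Note also that $F$ is not perfect over $\mathcal{O}_C$ at the nodes where it fails to be locally free, so derived $\Hom$ does not commute with base change for formal reasons; the flatness of $F$ and the fiberwise vanishing must enter through an exchange theorem for relative $\operatorname{Ext}$ sheaves.

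That exchange theorem is precisely what the paper's proof cites: by \cite[Theorem~1.10]{altman80}, the formation of $\Hom(F,\mathcal{O}_C)$ commutes with base change as soon as $\operatorname{Ext}^1(F_s,\mathcal{O}_{C_s})=0$ for every $s\in S$, and this fiberwise vanishing holds because the fibers are Gorenstein curves and $F_s$ is rank~$1$ torsion-free \cite[Proposition~6.1]{hartshorne}. Once you invoke such an exchange theorem, your detour through $\omega_{C/S}$ buys nothing: as you yourself note, $\omega_{C/S}$ is invertible on a family of nodal curves, so $\mathcal{O}$-duality and $\omega$-duality differ by a line-bundle twist, and the argument collapses to the paper's two-line proof. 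So the skeleton of your approach is repairable, but the key step currently rests on an appeal to a formalism that does not contain the needed statement.
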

\begin{proof}
	By \cite[Theorem~1.10]{altman80}, it is enough to show that $\operatorname{Ext}^{1}(I_s, \mathcal{O}_{X_s})$ vanishes for every point $s \in S$, and because $X_s$ is Gorenstein, vanishing is a special case of \cite[Proposition~6.1]{hartshorne}.
\end{proof}

\section{Acknowledgements}
The authors would like to thank  the anonymous referee, Renzo Cavalieri, Barbara Fantechi, Samuel Grushevsky, David Holmes, Klaus Hulek, Margarida Melo, Tif Shen, Mattia Talpo, Nicola Tarasca,  Orsola Tommasi, and Dmitry Zakharov for useful feedback, discussions and comments.

Many thanks to Filippo Viviani for catching an error in our statement of Fact~1 in a previous version of this paper.

Jesse Leo Kass was  supported by a grant from the Simons Foundation  (Award Number 429929) and by the National Security Agency under Grant Number H98230-15-1-0264.  The United States Government is authorized to reproduce and distribute reprints notwithstanding any copyright notation herein. This manuscript is submitted for publication with the understanding that the United States Government is authorized to reproduce and distribute reprints.

Nicola Pagani was supported by the EPSRC First Grant EP/P004881/1 with title  ``Wall-crossing on universal compactified Jacobians''.

\bibliographystyle{amsalpha}

\bibliography{biblio}

\end{document}